\newtheorem{Theorem}{Theorem}[section]
\newtheorem{Lemma}[Theorem]{Lemma}
\newtheorem{Prop}[Theorem]{Proposition}
\newtheorem{Rem}[Theorem]{Remark}
\newtheorem{Exa}[Theorem]{Example}
\def\cB{\mathcal{B}}
\def\cC{\mathcal{C}}
\def\cG{\mathcal{G}}
\def\cL{\mathcal{L}}
\def\cP{\mathcal{P}}
\def\cS{\mathcal{S}}
\def\Erw{\mathbb{E}}
\def\N{\mathbb{N}}
\def\Prob{\mathbb{P}} 
\def\R{\mathbb{R}}
\def\Z{\mathbb{Z}}
\def\eps{\varepsilon}
\def\1{\mathbf{1}}
\def\3{{\ss}}
\def\eqdist{\stackrel{d}{=}}
\def\weakeq{\stackrel{d}{\simeq}}
\def\idist{\stackrel{d}{\to}}
\def\weakly{\stackrel{w}{\to}}
\def\RA{\Rightarrow}
\def\LRA{\Leftrightarrow}
\def\wh{\widehat}
\def\ovl{\overline}
\def\sign{\textsl{sign}}
\begin{document}

\title*{Stability of perpetuities in Markovian environment}
\titlerunning{Stability of perpetuities in Markovian environment}
\author{Gerold Alsmeyer and Fabian Buckmann}
\institute{Inst.~Math.~Stochastics, Department
of Mathematics and Computer Science, University of M\"unster,
Orl\'eans-Ring 10, D-48149 M\"unster, Germany.\at
\email{gerolda@math.uni-muenster.de, f\_buck01@uni-muenster.de}\at
Both authors were partially supported by the Deutsche Forschungsgemeinschaft (SFB 878).}

\maketitle

\abstract{The stability of iterations of affine linear maps $\Psi_{n}(x)=A_{n}x+B_{n}$, $n=1,2,\ldots$, is studied in the presence of a Markovian environment, more precisely, for the situation when $(A_{n},B_{n})_{n\ge 1}$ is modulated by an ergodic Markov chain $(M_{n})_{n\ge 0}$ with countable state space $\cS$ and stationary distribution $\pi$. We provide necessary and sufficient conditions for the a.s. and the distributional convergence of the backward iterations $\Psi_{1}\circ\ldots\circ\Psi_{n}(Z_{0})$ and also describe all possible limit laws as solutions to a certain Markovian stochastic fixed-point equation. As a consequence of the random environment, these limit laws are stochastic kernels from $\cS$ to $\R$ rather than distributions on $\R$, thus reflecting their dependence on where the driving chain is started. We give also necessary and sufficient conditions for the distributional convergence of the forward iterations $\Psi_{n}\circ\ldots\circ\Psi_{1}$. The main differences caused by the Markovian environment as opposed to the extensively studied case of independent and identically distributed (iid) $\Psi_{1},\Psi_{2},\ldots$ are that: (1) backward iterations may still converge in distribution if a.s. convergence fails, (2) the degenerate case when $A_{1}c_{M_{1}}+B_{1}=c_{M_{0}}$ a.s. for suitable constants $c_{i}$, $i\in\cS$, is by far more complex than the degenerate case for iid $(A_{n},B_{n})$ when $A_{1}c+B_{1}=c$ a.s. for some $c\in\R$, and (3) forward and backward iterations generally have different laws given $M_{0}=i$ for $i\in\cS$ so that the former ones need a separate analysis. Our proofs draw on related results for the iid-case, notably by Vervaat \cite{Vervaat:79}, Grincevi\v{c}ius \cite{Grincev:82}, and Goldie and Maller \cite{GolMal:00}, in combination with recent results by the authors \cite{AlsBuck:16} on fluctuation theory for Markov random walks.}

\bigskip

{\noindent \textbf{AMS 2000 subject classifications:}
60J10 (60H25 60J15 60K05 60K15) \ }

{\noindent \textbf{Keywords:} random affine map, Markov modulation, iterated function system, forward and backward iteration, perpetuity, a.s. and distributional convergence, stochastic fixed-point equation}

\section{Introduction}\label{sec:intro}

The purpose of this article is to study stability aspects of iterations of random affine linear maps 
$$ \Psi_{n}(x)\ =\  A_{n}x+B_{n},\quad x\in\R, $$ 
in a discrete Markovian environment, that is for a sequence $(A_{n}, B_{n})_{n\ge 1}$ of $\R^{2}$-valued random vectors which is modulated by an ergodic (positive recurrent and aperiodic) Markov chain $(M_{n})_{n\ge 0}$ with countable state space $\cS$, transition matrix $P=(p_{ij})_{i,j\in\cS}$ and unique stationary law $\pi$. This means that, conditioned upon $M_{0}=i_{0},\,M_{1}=i_{1},\ldots$ for arbitrary $i_{0},i_{1},\ldots\in\cS$,
\begin{itemize}
\item $(A_{1},B_{1}),\,(A_{2},B_{2}),...$ are conditionally independent,
\item the conditional law of $(A_{n},B_{n})$ depends only on $(i_{n-1},i_{n})$ and is temporally homogeneous, i.e. 
$$ \Prob((A_{n},B_{n})\in\cdot|M_{n-1}=i_{n-1},\,M_{n}=i_{n})\ =\ K_{i_{n-1}i_{n}} $$ 
for a stochastic kernel $K$ from $\cS^{2}$ to $\R^{2}$ and all $n\ge 1$.
\end{itemize}
Our goal is to provide necessary and sufficient conditions for the convergence in distribution of the iterated function system (IFS)
\begin{equation}\label{ifsf}
R_{n}\ :=\ \Psi_{n}(R_{n-1})\ =\ \Psi_{n}\circ\ldots\circ\Psi_{1}(R_{0}),\quad n=1,2\ldots,
\end{equation} 
also called forward iterations, as well as conditions for the almost sure convergence of the corresponding backward iterations
\begin{equation}\label{backward sequence}
\Psi_{1}\circ\ldots\circ\Psi_{n}(R_{0})\ =\ \Pi_{n}R_{0}\ +\ \sum_{k=1}^{n}\Pi_{k-1}B_k,\quad n=1,2,\ldots
\end{equation}
where
$$ \Pi_{0}\ :=\ 1\quad\text{and}\quad\Pi_{n}\ :=\ A_{1} A_{2} \cdot\ldots\cdot A_{n},\quad n=1,2,\ldots $$
and $R_{0}$ and $(M_{n},A_{n},B_{n})_{n\ge 1}$ are conditionally independent given $M_{0}$. Under the last assumption, we call $R_{0}$ an \emph{admissible initial value} or just \emph{admissible}, for it ensures that $(M_{n},R_{n})_{n\ge 0}$ forms a temporally homogeneous Markov chain, its transition kernel being
$$ \Prob(M_{1}=j,R_{1}\in\cdot|M_{0}=i,R_{0}=r)\ =\ p_{ij}\,\Prob(A_{1}r+B_{1}\in\cdot|M_{0}=i,M_{1}=j)\quad\text{a.s.} $$
for all $i,j\in\cS$ and $r\in\R$. If $R_{0}=0$, the backward iterations take the form
\begin{equation}\label{sequence}
Z_{n}\ :=\ \Psi_{1}\circ\ldots\circ\Psi_{n}(0)\ =\ \sum_{k=1}^{n}\Pi_{k-1}B_{k},\quad n=1,2,\ldots
\end{equation}
with limiting random variable (if it exists)
\begin{equation}\label{per}
Z_{\infty}\ :=\ \sum_{k\ge 1}\Pi_{k-1}B_{k},
\end{equation} 
often called perpetuity due to its interpretation as a sum of perpetual discounted payments
in the realm of insurance and  finance. For a stationary and ergodic sequence $(A_{n},B_{n})_{n\ge 1}$ with generic copy $(A,B)$, it was shown by Brandt \cite{Brandt:86} that the sum in \eqref{per} does indeed converge absolutely if
\begin{equation}\label{Brandt cond}
\Erw\log|A|<0\quad\text{and}\quad\Erw\log^{+}|B|<\infty.
\end{equation}
He further showed under \eqref{Brandt cond} that the sequence
$$ Z_{\infty}(n)\ :=\ \sum_{k\ge n}\left(\prod_{l=1}^{k-1}A_{n+l-1}\right)B_{n+k-1},\quad n\ge 1, $$
thus $Z_{\infty}=Z_{\infty}(1)$, is the only proper stationary sequence satisfying
$$ Z_{\infty}(n)\ =\ \Psi_{n}(Z_{\infty}(n+1))\ =\ A_{n}Z_{\infty}(n+1)+B_{n}\quad\text{a.s.} $$
for all $n\ge 1$. In the Markov-modulated situation described above, our results will show that \eqref{Brandt cond} is far from being necessary for Brandt's conclusion to be valid.

\subsubsection*{The iid-case}

The case when $(A_{1},B_{1}),(A_{2},B_{2}),...$ are independent and identically distributed (iid) has received by far the most attention in the past, and a good account of the substantial literature may be found in the recent monography by Buraczewski et al. \cite{BurDamMik:16}. Here we only mention the work by Vervaat \cite{Vervaat:79}, Goldie \cite{Goldie:91}, Grincevi\v{c}ius \cite{Grincev:81,Grincev:82}, Goldie and Gr\"ubel \cite{GolGru:96}, Goldie and Maller \cite{GolMal:00}, Alsmeyer et al. \cite{AlsIksRoe:09} and, last but not least, the celebrated work by Kesten \cite{Kesten:73} on the multivariate case (not treated here) when the $A_{n}$ are $d\times d$ matrices and the $B_{n}$ are random vectors in $\R^{d}$.

\vspace{.1cm}
Note that $Z_{0}$ is admissible in the iid-case iff it is independent of $(A_{n},B_{n})_{n\ge 1}$. Due to the simple observation that
\begin{equation}\label{forward eqdist backward}
\Psi_{1}\circ\ldots\circ\Psi_{n}(Z_{0})\ \eqdist\ \Psi_{n}\circ\ldots\circ\Psi_{1}(Z_{0})
\end{equation}
for any $n\ge 1$ and admissible $Z_{0}$, where $\eqdist$ means equality in law, distributional convergence of the forward and backward iterations are equivalent, and one may therefore focus on the backward iterations. Moreover, the convergence of this sequence then even holds in the almost sure sense, the limit being the perpetuity $Z_{\infty}$ defined in \eqref{per}. Necessary and sufficient conditions for this convergence, i.e., for the existence of $Z_{\infty}$ as a proper random variable have been provided by Goldie and Maller \cite[Thm.~2.1]{GolMal:00}. Below we state a slightly stronger version of their theorem which allows for a better comparison with our corresponding result in the Markov-modulated case (Theorem \ref{thm:a.s. convergence}).

\vspace{.1cm}
For $x>0$, define
\begin{align*}
J(0)\,:=\,1\quad\text{and}\quad J(x)\,:=\,
\begin{cases}
\displaystyle{\frac{x}{\Erw(X^{+}\wedge x)}},&\text{if }\Prob(X>0)>0,\\[2mm]
\hfill x,&\text{otherwise}.
\end{cases}
\end{align*}
We further put $\Psi_{k:n}:=\Psi_{k}\circ\ldots\circ\Psi_{n}$ and $\Psi_{n:k}:=\Psi_{n}\circ\ldots\circ\Psi_{k}$ for $1\le k\le n$, and stipulate $\log^{+}0=0$.

\begin{Theorem}\label{thm:G/M theorem}
Suppose that
\begin{equation}\label{nonzero}
\Prob\{A=0\}\,=\,0\quad\text{and}\quad\Prob\{B=0\}\,<\,1.
\end{equation}
Then the following assertions are equivalent:
\begin{description}[(d)]\itemsep2pt
\item[(a)] $\Psi_{1:n}(Z_{0})$ converges a.s. to a proper random variable for any initial variable $Z_{0}$.
\item[(b)] $\lim_{n\to\infty}\Psi_{1:n}(0)=Z_{\infty}=\sum_{n\ge 1} \Pi_{n-1}B_{n}$ a.s. and $Z_\infty$ is a proper random variable. 
\item[(c)] $\lim_{n\to\infty} \Pi_n=0$ a.s. and $\Erw\, J(\log^{+} |B|)<\infty$.
\item[(d)] $\Prob(|A|=1)<1$ and $\limsup_{n\to\infty}|\Pi_{n-1}\,B_n|<\infty$ a.s.
\item[(e)] $\lim_{n\to\infty} \Pi_{n-1}\,B_n=0$ a.s.
\item[(f)] $\sum_{n\geq 1} |\Pi_{n-1} \,B_n|<\infty$ a.s.
\end{description}
On the other hand, if these conditions fail, then
\begin{equation}\label{DegBed ordinary}
\Prob(B=c\,(1-A))\ <\ 1\quad\text{ for all }c\in\R,
\end{equation} 
and 
$$ |\Psi_{1:n}(Z_0)|\ \xrightarrow{\Prob}\ \infty\quad\text{for any admissible }Z_{0} $$
are equivalent.
\end{Theorem}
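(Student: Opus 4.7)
The equivalences among (b)--(f) are exactly the content of \cite[Thm.~2.1]{GolMal:00} under the standing non-degeneracy \eqref{nonzero}, so the plan is to invoke that result directly. The only genuinely new equivalence is (a)~$\Leftrightarrow$~(b), which strengthens convergence from the canonical initial value $Z_{0}=0$ to an arbitrary admissible $Z_{0}$. The dichotomy at the end then follows by combining the failure of condition (c) with the non-degeneracy hypothesis \eqref{DegBed ordinary}, again via the Goldie--Maller machinery.

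For (a)~$\Leftrightarrow$~(b): the direction (a)~$\Rightarrow$~(b) is immediate on taking the (trivially admissible) constant initial value $Z_{0}=0$, for then $\Psi_{1:n}(0)=Z_{n}$. For (b)~$\Rightarrow$~(a) I would use the identity
\[
\Psi_{1:n}(Z_{0})\ =\ \Pi_{n}Z_{0}+Z_{n},\quad n\ge 1.
\]
Under (b), $Z_{n}\to Z_{\infty}$ a.s., while the Goldie--Maller implication (b)~$\Rightarrow$~(c) yields $\Pi_{n}\to 0$ a.s., so that $\Pi_{n}Z_{0}\to 0$ a.s.\ for any a.s.-finite $Z_{0}$ (admissibility is not even required here). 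Combining these gives $\Psi_{1:n}(Z_{0})\to Z_{\infty}$ a.s., which is (a).

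For the dichotomy, assume (a)--(f) all fail. If $B=c(1-A)$ a.s.\ for some $c\in\R$, then $\Psi(c)=c$ a.s.\ and consequently $\Psi_{1:n}(c)=c$ for all $n$, precluding $|\Psi_{1:n}(Z_{0})|\iprob\infty$ for the admissible initial value $Z_{0}=c$. Conversely, assume non-degeneracy \eqref{DegBed ordinary} together with failure of (c). The latter leaves two regimes: either $\Pi_{n}\not\to 0$ a.s.\ (so that the associated random walk $S_{n}:=\sum_{k\le n}\log|A_{k}|$ does not drift to $-\infty$), or $\Erw\,J(\log^{+}|B|)=\infty$ (in which case $\limsup_{n}|\Pi_{n-1}B_{n}|=\infty$ a.s.\ by the Borel--Cantelli reasoning that motivates the function $J$). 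In either regime, \eqref{DegBed ordinary} prevents algebraic cancellation among the partial sums $Z_{n}$, and a L\'evy-type concentration argument yields $|Z_{n}|\iprob\infty$; the same then holds for $\Psi_{1:n}(Z_{0})=\Pi_{n}Z_{0}+Z_{n}$ since $\Pi_{n}Z_{0}$ is independent of $Z_{n}$ for admissible $Z_{0}$.

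The hardest part I expect is this last, non-degenerate direction of the dichotomy: ruling out tightness of $(Z_{n})$ requires the full Goldie--Maller tail analysis via $J$ and makes essential use of the non-degeneracy condition \eqref{DegBed ordinary} to exclude hidden algebraic cancellations between the factors $\Pi_{k-1}$ and the increments $B_{k}$.
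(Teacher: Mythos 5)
Your proposal follows essentially the same route as the paper: the core equivalences (c)$\LRA$(d)$\LRA$(e)$\LRA$(f)$\RA$(b) and the ``if''-part of the dichotomy are delegated to \cite[Thm.~2.1]{GolMal:00}, while the remaining implications ((a)$\RA$(b), (b)$\RA$(d), and (c)$\RA$(a) via $\Psi_{1:n}(Z_{0})-\Psi_{1:n}(0)=\Pi_{n}Z_{0}\to 0$) are the same elementary observations the paper records. One caveat on your sketch of the hard direction: $\Pi_{n}Z_{0}$ is \emph{not} independent of $Z_{n}$ (both $\Pi_{n}$ and $Z_{n}$ are functions of $(A_{k},B_{k})_{k\le n}$; only $Z_{0}$ is independent of the pair), and $|Z_{n}|\iprob\infty$ alone does not yield $|\Pi_{n}Z_{0}+Z_{n}|\iprob\infty$ --- e.g.\ if $B=c(1-A)$ a.s.\ with $c\ne0$ and $|\Pi_{n}|\iprob\infty$, then $|Z_{n}|=|c||1-\Pi_{n}|\iprob\infty$ while $\Psi_{1:n}(c)\equiv c$ --- so the passage from $Z_{n}$ to $\Psi_{1:n}(Z_{0})$ again needs \eqref{DegBed ordinary} and is part of what one must take from Goldie--Maller rather than a consequence of independence.
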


Goldie and Maller actually showed ``(c)$\LRA$(d)$\LRA$(e)$\LRA$(f)$\RA$(b)'' and the ``if''-part of the last assertion of the theorem. But its ``only if''-part (note that $\Psi_{1:n}(c)=c$ if $B=c(1-A)$ a.s.) as well as ``(a)$\RA$(b)'' and ``(b)$\RA$(d)'' are obvious.

\vspace{.1cm}
So we see that, under the nondegeneracy condition \eqref{DegBed ordinary}, the backward iteration $Z_{n}$ either converges a.s. or diverges to $\infty$ in probability. The conditions on $A$ and $B$ in \eqref{nonzero}, which are always assumed to hold in the subsequent discussion, rule out trivial cases. This being clear for the condition on $B$, we only note that $\Prob(A=0)>0$ implies that $N=\inf\{n\ge 1:A_{n}=0\}$ is a.s. finite and thus $Z_{\infty}=\sum_{k=1}^{N}\Pi_{k-1}B_{k}$ a.s.

\vspace{.1cm}
Provided that $Z_{\infty}$ is a proper random variable, we have $\Pi_{n}\to 0$ a.s. by Theorem \ref{thm:G/M theorem}(c). Consequently,
$$ \lim_{n\to\infty}|\Psi_{1:n}(Z_{0})-\Psi_{1:n}(0)|\ =\ \lim_{n\to\infty}|\Pi_{n}Z_{0}|\ =\ 0\quad\text{a.s.} $$
for any initial value $Z_{0}$ which in combination with \eqref{forward eqdist backward} entails that the law of $Z_{\infty}$ equals the \emph{unique} stationary distribution of the forward iterations
$R_{n}$, clearly a recursive Markov chain (see \eqref{ifsf}), and thus a distributional fixed point of the equation
\begin{equation}\label{SFPE iid}
R\ \eqdist\ AR'+B
\end{equation}
under the usual convention that the variable $R'$ is a copy of $R$ and independent of $(A, B)$, see \cite[Lemma 1.1]{Vervaat:79}. Regarding all solutions to this equation, we quote the following result by Vervaat \cite[Thm.~4.5]{Vervaat:79} and Goldie and Maller \cite[Thm.~3.1]{GolMal:00}. Let $\cP(\R)$ denote the set of probability distributions on $\R$.

\begin{Theorem}\label{thm:fixed points IID}
Suppose $\Prob(A=0)=0$. Then there exists a fixed point $Q\in\cP(\R)$ of \eqref{SFPE iid} iff one of the following conditions is satisfied:
\begin{description}[(b)]\itemsep2pt
\item[(a)] $\lim_{n\to\infty}\Pi_{n}=0$ a.s. and $\Erw\,J(\log^{+}|B|)<\infty$. In this case, $Q$ is unique and equals the law of $Z_{\infty}$.
\item[(b)] $\Prob(|A|=1)=\Prob(B=c\,(1-A))=1$ for some $c\in\R$. Then,
\begin{description}\itemsep2pt
	\item[(b.1)] if $\Prob(A=1)<1$, any $Q$ which is symmetric about $c$ is a fixed point,
	\item[(b.2)] if $\Prob(A=1)=1$, any $Q\in\cP(\R)$ is a fixed point. 
\end{description}
\item[(c)] $\limsup_{n\to\infty}\,|\Pi_n|=\infty$ a.s. and $\Prob(B=c\,(1-A))=1$. In this case, $Q=\delta_{c}$ is the unique fixed point.
\end{description}
\end{Theorem}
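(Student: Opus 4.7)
The plan is to combine Theorem~\ref{thm:G/M theorem} with a dichotomy between the non-degenerate case (which forces (a)) and the arithmetic case $B=c(1-A)$ a.s.\ (which leads to (b) or (c)), classifying the latter through the trichotomy for the iid random walk $S_{n}=\sum_{k=1}^{n}\log|A_{k}|$. Choosing $R_{0}\sim Q$ independent of $(A_{n},B_{n})_{n\ge 1}$, a distribution $Q$ is a fixed point of \eqref{SFPE iid} iff $\Psi_{n:1}(R_{0})\eqdist R_{0}$ for every $n\ge 1$, and by \eqref{forward eqdist backward} iff $\Psi_{1:n}(R_{0})=\Pi_{n}R_{0}+Z_{n}\eqdist R_{0}$ for every $n$; in particular $(\Psi_{1:n}(R_{0}))_{n\ge 1}$ is tight, and this tightness will drive the necessity direction.

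For necessity, I would invoke Theorem~\ref{thm:G/M theorem} to see that, in presence of \eqref{DegBed ordinary}, either its equivalent conditions (a)--(f) hold or $|\Psi_{1:n}(R_{0})|\iprob\infty$, the latter being incompatible with tightness. Hence either (i) condition (c) of Theorem~\ref{thm:G/M theorem} holds --- then $\Pi_{n}\to 0$ and $Z_{n}\to Z_{\infty}$ a.s., whence $\Psi_{1:n}(R_{0})\to Z_{\infty}$ a.s.\ and $Q$ must be the law of $Z_{\infty}$, giving case~(a) --- or (ii) $B=c(1-A)$ a.s.\ for some $c\in\R$. In situation~(ii), $\Psi(c)=c$, so $\Psi_{1:n}(R_{0})-c=\Pi_{n}(R_{0}-c)$ and $\Pi_{n}(R_{0}-c)\eqdist R_{0}-c$ with $\Pi_{n}$ independent of $R_{0}$. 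I would then split on the asymptotics of $S_{n}$: if $|A|=1$ a.s., then $|\Pi_{n}|\equiv 1$ and we are in case~(b), with conditioning on $A$ in $A(R_{0}-c)\eqdist R_{0}-c$ giving (b.2) when $A=1$ a.s.\ and forcing $R_{0}-c$ to be symmetric about $0$ when $\Prob(A=-1)>0$, which is (b.1). If $\Prob(|A|=1)<1$, then $(S_{n})$ is a non-degenerate iid random walk and the classical trichotomy gives either $\Pi_{n}\to 0$ a.s.\ or $\limsup_{n\to\infty}|\Pi_{n}|=\infty$ a.s.; in the first sub-case $Z_{n}=c(1-\Pi_{n})\to c$ by telescoping, so $Q=\delta_{c}$, and Theorem~\ref{thm:G/M theorem} applied to the proper limit $Z_{\infty}=c$ automatically supplies the integrability in~(a). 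In the second sub-case, from $\Pi_{n}(R_{0}-c)\eqdist R_{0}-c$ and the independence of $\Pi_{n}$ and $R_{0}$ I would deduce
$$ \Prob\bigl(|\Pi_{n}|>K/\varepsilon\bigr)\,\Prob\bigl(|R_{0}-c|>\varepsilon\bigr)\ \le\ \Prob\bigl(|R_{0}-c|>K\bigr)\xrightarrow{K\to\infty}0, $$
so that if $\Prob(R_{0}\ne c)>0$ then $(\Pi_{n})$ would be tight, contradicting the standard fact that a non-degenerate iid random walk is not tight. Hence $Q=\delta_{c}$, and we are in case~(c).

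Sufficiency is straightforward in each case: (a) follows from the a.s.\ convergence $\Psi_{1:n}(R_{0})\to Z_{\infty}$ granted by Theorem~\ref{thm:G/M theorem}; (b.2) is trivial because each $\Psi_{n}=\mathrm{id}$; (b.1) uses symmetry of $R_{0}-c$ about $0$ together with $A\in\{-1,+1\}$ to write $AR_{0}+B=c+A(R_{0}-c)\eqdist c+(R_{0}-c)=R_{0}$; and (c) follows from $\Psi(c)=c$, so that $\delta_{c}$ is a fixed point (uniqueness having been established during the necessity part). The main obstacle I expect is the final sub-case of the necessity direction --- passing from law-invariance of $\Pi_{n}(R_{0}-c)$ to tightness of $(\Pi_{n})$ via independence and then ruling out tightness of a non-degenerate iid random walk; everything else is routine bookkeeping.
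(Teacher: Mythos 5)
Your overall architecture---tightness of the backward iterations from stationarity plus \eqref{forward eqdist backward}, the dichotomy via Theorem~\ref{thm:G/M theorem} between the nondegenerate case (forcing (a)) and $B=c(1-A)$ a.s., and the classification of the degenerate case through the trichotomy for $S_{n}=\log|\Pi_{n}|$---is sound and is essentially the classical Vervaat/Goldie--Maller route that the paper only cites rather than reproves. Cases (a), (b.1), (b.2) and the degenerate sub-case $\Pi_{n}\to0$ are handled correctly (with the harmless caveat that for $c=0$, i.e.\ $B=0$ a.s., Theorem~\ref{thm:G/M theorem} is not formally applicable, but then $\Erw J(\log^{+}|B|)=J(0)=1<\infty$ trivially).

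The genuine gap is in the final sub-case $\limsup_{n}|\Pi_{n}|=\infty$ a.s.\ with $\Prob(|A|=1)<1$. Your inequality yields only the \emph{one-sided} bound $\sup_{n}\Prob(|\Pi_{n}|>L)\to0$ as $L\to\infty$, i.e.\ tightness of $(\Pi_{n})$ as real random variables. This does not contradict the non-tightness of the random walk $(S_{n})$: tightness of $(\Pi_{n})$ in $\R$ allows all the mass of $|\Pi_{n}|$ to drift to $0$, i.e.\ $S_{n}\to-\infty$ in probability, and there do exist non-degenerate \emph{oscillating} random walks with $\Prob(S_{n}>0)\to0$ (walks tending to $-\infty$ in probability but not a.s.), for which $\limsup_{n}S_{n}=+\infty$ a.s.\ and yet $\sup_{n}\Prob(S_{n}>L)\to0$ as $L\to\infty$. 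Note also that $\limsup_{n}|\Pi_{n}|=\infty$ a.s.\ gives no lower bound on $\limsup_{n}\Prob(|\Pi_{n}|>L)$, since Fatou's lemma for events goes the wrong way. So the contradiction you invoke is a non sequitur as stated. Two standard repairs: (i) add the symmetric bound near zero, $\Prob(|\Pi_{n}|\le\delta)\,\Prob(\varepsilon<|R_{0}-c|\le M)\le\Prob(0<|R_{0}-c|\le\delta M)$ (using $\Pi_{n}\ne0$ a.s.), which upgrades your estimate to two-sided tightness of $(S_{n})$ and genuinely contradicts the Kolmogorov--Rogozin concentration bound $\Prob(|S_{n}|\le L)\to0$; or (ii) argue directly from $\Pi_{n}(R_{0}-c)\eqdist R_{0}-c$ with $n=1$: conditioning on $R_{0}\ne c$ and taking logarithms of moduli gives $\mu=\nu*\mu$ with $\nu=\mathcal{L}(\log|A|)$, whence $\hat\nu\equiv1$ on a neighbourhood of $0$, so $|A|=1$ a.s.---impossible in case (c), since $|A|=1$ a.s.\ would force $|\Pi_{n}|\equiv1$.
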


Returning to the Markov-modulated situation when $(A_{n},B_{n})_{n\ge 1}$ is governed by an ergodic discrete Markov chain $(M_{n})_{n\ge 0}$ with unique stationary law $\pi$, it is natural to ask for extensions of the previous two theorems. This appears to be an open question despite a number of contributions by de Saporta \cite{Saporta:05b}, Roitershtein \cite{Roitershtein:07}, Collamore \cite{Collamore:09}, Ghosh et al \cite{Ghosh+al:10}, Hay et al \cite{HayRasRoi:11}, Buraczewski and Letachowicz \cite{BurLet:12}, Basu and Roitershtein \cite{BasuRoi:13} dealing with other aspects of the model, mostly the tail of $Z_{\infty}$ under varying assumptions (including continuous state space) on the driving chain $(M_{n})_{n\ge 0}$. Applications in Econometrics can be found in Hamilton \cite{Hamilton:89}, Benhabib et al \cite{Benhabib+al:11}, Benhabib and Dave \cite{BenhabibDave:13}.

\vspace{.1cm}
We will use the common notation $\Prob_{i}:=\Prob(\cdot|M_{0}=i)$ for $i\in\cS$ and $\Prob_{\lambda}=\sum_{i\in\cS}\lambda_{i}\,\Prob_{i}$ for any distribution $\lambda=(\lambda_{i})_{i\in\cS}$ on $\cS$. Since $(A_{n},B_{n})_{n\ge 1}$ then forms a stationary sequence under $\Prob=\Prob_{\pi}$, Brandt's result applies to give that $Z_{\infty}$ defined by \eqref{per} exists in the almost sure sense if \eqref{Brandt cond} holds with $(A,B)$ denoting a generic copy of $(A_{1},B_{1})$ under $\Prob_{\pi}$. Regarding the recursive (and now Markov-modulated) IFS $(R_{n})_{n\ge 0}$, the very same condition further ensures that this IFS has negative (top) Liapunov exponent under $\Prob_{\pi}$ and therefore, by Elton's theorem \cite[Theorem 3]{Elton:90}, a unique stationary law, viz. the $\Prob_{\pi}$-distribution of
\begin{equation}\label{Zinftytag}
{}^{\#}Z_{\infty}\ =\ \sum_{n\ge 0}\left(\prod_{k=0}^{n-1}A_{-k}\right)B_{-n}
\end{equation}
when $(A_{n},B_{n})_{n\in\Z}$ denotes a doubly infinite stationary extension. On the other hand, it should be clear in view of Theorem \ref{thm:G/M theorem} that necessary and sufficient conditions for the existence of $Z_{\infty}$ in the almost sure sense are more difficult to come by in the Markov-modulated situation. Our results to be stated in Section \ref{sec:main results} will actually also show that there are nondegenerate situations where $Z_{n}$ does not converge a.s. but still converges in distribution, which is impossible for iid $(A_{n},B_{n})$ (see Theorem \ref{thm:weak convergence}).

\vspace{.1cm}
Regarding forward versus backward iterations, the following is another important aspect that distinguishes the Markov-modulated case from the iid-case. If $(M_{n},A_{n+1},B_{n+1})_{n\in\Z}$ denotes a doubly infinite extension of the stationary sequence $(M_{n},A_{n+1},B_{n+1})_{n\ge 0}$ under $\Prob_{\pi}$ or, equivalently, $(M_{n},\Psi_{n+1})_{n\in\Z}$ the resulting doubly infinite extension of $(M_{n},\Psi_{n+1})_{n\ge 0}$, then it is no longer always true that
$$ (\Psi_{1},\ldots,\Psi_{n})\ \eqdist\ (\Psi_{n},\ldots,\Psi_{1})\quad\text{under }\Prob_{\pi} $$
for all $n\in\N$. Indeed, it requires $(M_{n})_{n\ge 0}$ to be reversible. In general, however, the dynamics of the backward sequence $(M_{-n},\Psi_{-n+1})_{n\ge 0}$ are different due to the fact that the backward driving chain $(M_{-n})_{n\ge 0}$ has the dual transition matrix ${}^{\#}P=(\pi_{j}p_{ji}/\pi_{i})_{i,j\in\cS}$. More precisely, if $({}^{\#}M_{n},{}^{\#}A_{n+1},{}^{\#}B_{n+1})_{n\ge 0}$ denotes a dual of $(M_{n},A_{n+1},B_{n+1})_{n\ge 0}$ with $M_{0}={}^{\#}M_{0}$, ${}^{\#}\Psi_{n}(x):={}^{\#}A_{n}x+{}^{\#}B_{n}$ for $n\ge 1$ and such that both sequences are stationary under $\Prob_{\pi}$, see Subsection \ref{subsec:duality} for further details, then we have, under $\Prob_{\pi}$,
\begin{align}\label{eq:forward<->backward dual}
\Psi_{1:n}(Z_{0})\ \eqdist\ {}^{\#}\Psi_{n:1}(Z_{0})\quad\text{and}\quad\Psi_{n:1}(Z_{0})\ \eqdist\ {}^{\#}\Psi_{1:n}(Z_{0})
\end{align}
whenever $Z_{0}$ is admissible for $(M_{n},A_{n+1},B_{n+1},{}^{\#}M_{n},{}^{\#}A_{n+1},{}^{\#}B_{n+1})_{n\ge 0}$, i.e. conditionally independent of this sequence given $M_{0}={}^{\#}M_{0}$. This suggests that limit results for the forward iterations $\Psi_{n:1}(Z_{0})$ may still be derived by a look at backward iterations, but for the dual sequence $({}^{\#}\Psi_{n})_{n\ge 1}$. On the other hand, a nonconstant $Z_{0}$ may no longer be admissible for $({}^{\#}M_{n},{}^{\#}A_{n+1},{}^{\#}B_{n+1})_{n\ge 0}$ and \eqref{eq:forward<->backward dual} does no longer hold under $\Prob_{i}$ for $i\in\cS$. Therefore additional arguments will be needed as well.

\vspace{.1cm}
Last but not least, it is to be announced here that Equation \eqref{SFPE iid} as a characterization of the limit law of the backward sequence $\Psi_{1:n}(Z_{0})$ also requires an adjustment in the Markov-modulated case. Without giving details, which are provided in Subsection \ref{subsec:FP property}, we only mention at this point that, when $\Psi_{1:n}(Z_{0})\to Z_{\infty}$ a.s. and thus $\Psi_{2:n}(Z_{0})\to Z_{\infty}'$ a.s. as well, we still have $Z_{\infty}\eqdist Z_{\infty}'$ and also, by the continuity of $\Psi_{1}$, the same formal relation between the two limits as in the iid-case, namely
\begin{equation}\label{SFPE false}
Z_{\infty}\ =\ \Psi_{1}(Z_{\infty}')\ =\ A_{1}Z_{\infty}'+B_{1}\quad\text{a.s.}
\end{equation}
However, \emph{$Z_{\infty}'$ is no longer independent of $(A_{1},B_{1})$}. Roughly speaking, the proper adjustment when aiming to still interpret \eqref{SFPE false} as a stochastic fixed-point equation must be in terms of stochastic kernels, the conditional law of $Z_{\infty}$ given $M_{0}$ then being a solution in a certain sense. With this at hand, we will be able to prove a counterpart of Theorem \ref{thm:fixed points IID}, see Theorem \ref{thm:fixed-point property}.

\vspace{.1cm}
We have organized this work as follows. Some preliminary facts on return times, fluctuation theory for MRW, duality and degeneracy are collected in Section \ref{sec:preliminaries}. The main results are presented in Section \ref{sec:main results}, namely Theorems \ref{thm:a.s. convergence} and \ref{thm:weak convergence} on the convergence of the backward iterations (Subsection \ref{subsec:3.1}), Theorem \ref{thm:fixed-point property} on the solutions to the corresponding stochastic fixed-point equation (Subsection \ref{subsec:FP property}), and Theorem \ref{thm:convergence forward} on the convergence of the forward iterations (Subsection \ref{subsec:convergence forward}). A discussion of the degeneracy condition \eqref{pi-degenerate}, which requires considerably more attention than its counterpart in the iid-case, will be given in Section \ref{sec:degeneracy}, followed by the proofs of the main results in Sections \ref{sec:proof 1st thm}--\ref{sec:proof thm forward}. Finally, two auxiliary lemmata are given in the Appendix.

\section{Preliminaries}\label{sec:preliminaries}

This section is devoted to the collection of some useful  and fundamental definitions, facts and observations related to the sequence $(A_{n},B_{n})_{n\ge 1}$ and its driving chain $(M_{n})_{n\ge 0}$. They will be useful or even needed for the statement of our main results and their proofs.
Let us stipulate for the rest of this article that ''a.s.'' without qualifier means ``$\,\Prob_{\pi}\,$-a.s.'' or, equivalently, ``$\,\Prob_{i}$-a.s. for all $i\in\cS\,$''.

\subsection{Return times}\label{subsec:return times}

Since the driving chain $(M_{n})_{n\ge 0}$ is positive recurrent with discrete state space $\cS$, it is natural to introduce the successive return times to a state $i\in\cS$, viz. $\tau(i):=\tau_{1}(i)$ and
\begin{align*}
\tau_{n}(i)\ :=\ \inf\{k>\tau_{n-1}(i):M_{k}=i\}\quad\text{for }n=1,2,\ldots,
\end{align*}
where $\tau_{0}(i):=0$. We further define
\begin{align*}
&\hspace{2cm}\Psi_{n}^{i}(x)\ :=\ \Psi_{\tau_{n-1}(i)+1:\tau_{n}(i)}(x)\ =\ A_{n}^{i}x+B_{n}^{i}
\shortintertext{and}
&A_{n}^{i}\ :=\ \prod_{k=\tau_{n-1}(i)+1}^{\tau_{n}(i)}A_{k}\quad\text{and}\quad B_{n}^{i}\ :=\ \prod_{k=\tau_{n-1}(i)+1}^{\tau_{n}(i)}\left(\prod_{l=\tau_{n-1}(i)+1}^{k}A_{l}\right)B_{k}.
\end{align*}
for $n\ge 1$ and note that the $(A_{n}^{i},B_{n}^{i})$ are obviously a.s. finite (since all $\tau_{n}(i)$ have this property) and independent (also of $Z_{0}$ if admissible) with
\begin{equation}\label{def A^s,B^s}
(A_{1}^{i},B_{1}^{i})\ =\ \left(\Pi_{\tau(i)},\sum_{k=1}^{\tau(i)}\Pi_{k-1}B_{k}\right).
\end{equation}
They are also identically distributed for $n\ge 2$ under any initial distribution for the driving chain, and even for $n\ge 1$ when choosing $\Prob=\Prob_{i}:=\Prob(\cdot|M_{0}=i)$. Now we have
\begin{align*}
Z_{\tau_{n}(i)}\ =\ \Psi_{1:\tau_{n}(i)}(Z_{0})\ =\ \Psi_{1:n}^{i}(Z_{0})\ =\ \Pi_{\tau(i)}\,\Psi_{2:n}^{i}(Z_{0})\,+\,B_{1}^{i}
\end{align*}
for all $n\ge 1$ and $i\in\cS$ and thus see that convergence of $Z_{\tau_{n}(i)}$ leads back to the iid-case studied by Vervaat \cite{Vervaat:79} and Goldie and Maller \cite{GolMal:00} when using their results for the backsward system $(\Psi_{1:n}^{i}(Z_{0}))_{n\ge 1}$.

\subsection{Fluctuation theory}\label{subsec:fluctuation}

Defining $X_{n}:=-\log|A_{n}|$ and $S_{n}:=-\log|\Pi_{n}|=\sum_{k=1}^{n}X_{k}$ for $n\ge 1$, our assumptions imply that $(M_{n},S_{n})_{n\ge 0}$, with $S_{0}:=0$, forms a zero-delayed Markov random walk (MRW), i.e., $X_{1},X_{2},\ldots$ are conditionally independent given $M_{0},\,M_{1},\ldots$, and the conditional law of $X_{n}$ depends only on $M_{n-1},M_{n}$ and is temporally homogeneous way, thus
$$ \Prob(X_{n}\in\cdot|M_{n-1}=i,\,M_{n}=j)\ =\ F_{ij} $$
for all $n\ge 1$, $i,j\in\cS$ and a stochastic kernel $F$ from $\cS^{2}$ to $\R$.
The following trichotomy is fundamental for our further investigations and a direct consequence of the results in \cite[Section 4]{AlsBuck:16}.

\begin{Prop}\label{prop:trichotomy MRW}
For any MRW $(M_{n},S_{n})_{n\ge 0}$, exactly one of the following three alternatives holds:
\begin{description}[(T3)]\itemsep3pt
\item[(T1)] $S_{n}\to\infty$ a.s.
\item[(T2)] $\Prob_{i}(S_{\tau(i)}=0)=1$ for all $i\in\cS$.
\item[(T3)] $\liminf_{n\to\infty}S_{n}=-\infty$ a.s. and $\Prob_{i}(S_{\tau(i)}=0)<1$ for all $i\in\cS$.
\end{description}
\end{Prop}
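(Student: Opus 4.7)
The plan is to reduce the trichotomy to the classical one for iid random walks by embedding $(S_n)$ at the return times to a fixed reference state, and then to lift the conclusions both from the embedded walk to the full sequence $(S_n)$ and from a single starting state to all of $\cS$.

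Fix $i\in\cS$ and put $Y_n^i := S_{\tau_n(i)}-S_{\tau_{n-1}(i)}$. By the strong Markov property of $(M_n,S_n)$ together with positive recurrence of $(M_n)$, the sequence $(Y_n^i)_{n\ge 1}$ is iid under $\Prob_i$, so $(S_{\tau_n(i)})_{n\ge 0}$ is a classical iid random walk under $\Prob_i$. Invoking the Chung--Fuchs trichotomy together with the degenerate case leaves exactly one of: (a) $Y_1^i=0$ $\Prob_i$-a.s.; (b) $S_{\tau_n(i)}\to+\infty$ $\Prob_i$-a.s.; (c) $S_{\tau_n(i)}\to-\infty$ $\Prob_i$-a.s.; (d) $\liminf_n S_{\tau_n(i)}=-\infty$ and $\limsup_n S_{\tau_n(i)}=+\infty$ $\Prob_i$-a.s. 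Case (a) is exactly the statement of (T2) at the state $i$; in cases (c) and (d) the bound $\liminf_n S_n\le\liminf_n S_{\tau_n(i)}=-\infty$ together with $\Prob_i(S_{\tau(i)}=0)<1$ gives (T3) at the state $i$.

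Case (b) requires more care, since $S_{\tau_n(i)}\to+\infty$ only controls $(S_n)$ along a subsequence. For $\tau_{m-1}(i)<n\le\tau_m(i)$, bound $S_n\ge S_{\tau_{m-1}(i)}+\underline Y_m^i$, where $\underline Y_m^i:=\min\{S_k-S_{\tau_{m-1}(i)}\colon \tau_{m-1}(i)<k\le\tau_m(i)\}$. In case (b) the iid theory forces $\Erw_i(Y_1^i)^-<\infty$, and a Wald identity for MRW then yields $\Erw_\pi X_1^-<\infty$; since the cycle-wise negative variations $\sum_{k=\tau_{m-1}(i)+1}^{\tau_m(i)} X_k^-$ are iid with finite mean under $\Prob_i$, the one-term estimate $\underline Y_m^i=o(m)$ a.s.\ follows, whence $S_n\to+\infty$ $\Prob_i$-a.s., which is (T1) at the state $i$.

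It remains to propagate each alternative uniformly across $\cS$. Cases (T1) and (T3), once established at one state, extend to every $\Prob_j$ by the strong Markov property and irreducibility: under $\Prob_j$ the chain visits $i$ in a.s.\ finite time, so the tail of $(S_n)$ differs from its $\Prob_i$-version only by a finite random shift, which cannot alter $S_n\to\infty$ nor $\liminf_n S_n=-\infty$. The genuinely delicate case is (a): the relation $Y_1^i=0$ $\Prob_i$-a.s.\ constrains $S$ only at visits to $i$, and one must upgrade this to a coboundary representation $S_n = c_{M_n}-c_{M_0}$ a.s.\ for some $c\colon\cS\to\R$, from which $S_{\tau(j)}=c_j-c_j=0$ $\Prob_j$-a.s.\ for every $j\in\cS$. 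This extraction, relying on the cocycle structure of $(S_n)$ together with the ergodic decomposition of the driving chain, is the main obstacle of the proof and is precisely what the MRW fluctuation theory developed in \cite[Sec.~4]{AlsBuck:16} supplies.
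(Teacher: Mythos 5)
The paper does not prove Proposition \ref{prop:trichotomy MRW} itself; it is quoted as a direct consequence of \cite[Section 4]{AlsBuck:16}. Your reduction to the embedded iid walk $(S_{\tau_{n}(i)})_{n\ge 0}$ and your handling of the degenerate and negatively divergent/oscillating cases are fine in outline (modulo the solidarity of null-homology, which you correctly defer to \cite{AlsBuck:16}). The fatal problem is your case (b).

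Your claim that $S_{\tau_{n}(i)}\to+\infty$ $\Prob_{i}$-a.s.\ implies $S_{n}\to\infty$ a.s.\ is false, and the paper says so explicitly: right after Proposition \ref{prop:trichotomy embedded RW} it is pointed out that ``$(S_{n})_{n\ge 0}$ may be oscillating although its embedded RWs $(S_{\tau_{n}(i)})_{n\ge 0}$ are all positive or negative divergent,'' with the infinite-petal flower chain of \cite[Example 6.2]{AlsBuck:16} (reused in Remark \ref{rem1:a.s. convergence}) as a counterexample: there $\Pi_{\tau_{n}(0)}\to 0$ a.s.\ (i.e.\ $S_{\tau_{n}(0)}\to\infty$) yet $\limsup_{n}\Pi_{n}=\infty$ a.s.\ (i.e.\ $\liminf_{n}S_{n}=-\infty$). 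Two of your intermediate steps are also incorrect. First, positive divergence of an iid walk does \emph{not} force $\Erw_{i}(Y_{1}^{i})^{-}<\infty$: by the Kesten--Erickson criterion one can have $S_{\tau_{n}(i)}\to\infty$ a.s.\ with $\Erw_{i}(Y_{1}^{i})^{+}=\Erw_{i}(Y_{1}^{i})^{-}=\infty$. Second, even granting $\Erw_{i}(S_{\tau(i)})^{-}<\infty$, no Wald identity yields $\Erw_{\pi}X_{1}^{-}<\infty$: the quantity $(S_{\tau(i)})^{-}$ sees only the net cycle increment, while $\sum_{k=1}^{\tau(i)}X_{k}^{-}$ can be infinite in mean because of cancellations inside a cycle (this is exactly what happens in the flower example). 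Consequently the within-cycle minima $\underline{Y}_{m}^{i}$ need not be $o(m)$, and the dichotomy between (T1) and the oscillating part of (T3) in your case (b) is precisely the nontrivial content of the fluctuation theory in \cite[Section 4]{AlsBuck:16}: it is governed by an Erickson-type integrability condition comparing the cycle minima to $S_{\tau_{n}(i)}$, not by the embedded walk alone. As written, your argument proves only the trichotomy for the embedded walks (Proposition \ref{prop:trichotomy embedded RW}), not the one asserted for $(S_{n})_{n\ge 0}$.
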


If (T1) holds, $(M_{n},S_{n})_{n\ge 0}$ is called \emph{positive divergent}, a particular case being $\Erw_{\pi}X_{1}>0$, which in our setting with $X_{n}=-\log|A_{n}|$ means
$$ \Erw_{\pi}\log|A_{1}|\ <\ 0 $$ 
(as in condition \eqref{Brandt cond} with $\Prob=\Prob_{\pi}$). Type (T2) occurs iff $(M_{n},S_{n})_{n\ge 0}$ is \emph{null-homo\-logous} in the sense of Lalley \cite{Lalley:86}, which means that, for some function $g:\cS\to\R$,
\begin{equation}\label{NH1}
X_{n}\ =\ g(M_{n})-g(M_{n-1})\quad\text{a.s.}
\end{equation}
or, equivalently,
\begin{equation}\label{NH2}
S_{n}\ =\ g(M_{n})-g(M_{0})\quad\text{a.s.}
\end{equation}
for all $n\ge 1$, see \cite[Lemma 4.1]{AlsBuck:16}. This corresponds to the trivial case $S_{n}=0$ a.s. for all $n\ge 0$ in the case of iid increments. Finally, type (T3) occurs when $(M_{n},S_{n})_{n\ge 0}$ is not null-homologous and either \emph{negative divergent}, i.e. $\lim_{n\to\infty}S_{n}=-\infty$ a.s., or \emph{oscillating}, i.e.
$$ -\infty\ =\ \liminf_{n\to\infty}S_{n}\ <\ \limsup_{n\to\infty}S_{n}\ =\ \infty\quad\text{a.s.} $$

Since $(S_{\tau_{n}(i)})_{n\ge 0}$ has iid increments for each $i\in\cS$, the following trichotomy for the embedded sequences $(\Pi_{\tau_{n}(i)})_{n\ge 0}$ follows directly from classical fluctuation theory for ordinary random walks. 

\begin{Prop}\label{prop:trichotomy embedded RW}
Exactly one of the following three alternatives holds for $(\Pi_{\tau_{n}(i)})_{n\ge 0}$:
\begin{description}[(T3')]\itemsep3pt
\item[(T1')] $\Pi_{\tau_{n}(i)}\to 0$ a.s.
\item[(T2')] $\Prob_{i}(|\Pi_{\tau(i)}|=1)=1$.
\item[(T3')] $\limsup_{n\to\infty}|\Pi_{\tau_{n}(i)}|=\infty$ a.s.
\end{description}
Moreover, the type is the same for all $i\in\cS$.
\end{Prop}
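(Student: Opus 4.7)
The plan is to reduce the trichotomy to the classical one for ordinary random walks, applied to the embedded walk $(S_{\tau_n(i)})_{n \geq 0}$ with $S_n = -\log|\Pi_n|$, and then use Proposition \ref{prop:trichotomy MRW} to upgrade statements at a fixed state $i$ to statements valid for every $i \in \cS$.

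First I would note that the strong Markov property of $(M_n)_{n \ge 0}$ at the return times $\tau_n(i)$, combined with the temporal homogeneity of the MRW, makes the excursion blocks between consecutive visits to $i$ independent and identically distributed under $\Prob_i$. In particular, the increments $S_{\tau_n(i)} - S_{\tau_{n-1}(i)} = -\log|A_n^i|$ form an i.i.d.\ sequence under $\Prob_i$, so that $(S_{\tau_n(i)})_{n \ge 0}$ is a zero-delayed ordinary random walk and $|\Pi_{\tau_n(i)}| = \exp(-S_{\tau_n(i)})$. The classical Chung--Fuchs trichotomy for i.i.d.\ partial sums then gives, a.s.\ under $\Prob_i$, exactly one of the alternatives
$$
S_{\tau_n(i)} \to +\infty,\qquad S_{\tau(i)} = 0,\qquad \liminf_{n\to\infty} S_{\tau_n(i)} = -\infty,
$$
which upon exponentiation yield (T1'), (T2') and (T3'), respectively.

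For the uniformity in $i$, I would match these three cases with the global MRW trichotomy of Proposition \ref{prop:trichotomy MRW}. Under (T1), $S_n \to +\infty$ a.s.\ passes to every subsequence, so (T1') holds for all $i$. Under (T2), the null-homologous representation $S_n = g(M_n) - g(M_0)$ from \cite[Lemma 4.1]{AlsBuck:16} gives $S_{\tau_n(i)} = g(i) - g(i) = 0$ a.s.\ under each $\Prob_i$, hence (T2') for all $i$. Under (T3), $\Prob_i(S_{\tau(i)} = 0) < 1$ immediately excludes (T2'), and the remaining task, which is the main obstacle, is to rule out (T1'): assuming $S_{\tau_n(i)} \to +\infty$ a.s.\ under some $\Prob_i$, the cycle identity $\Erw_\pi X_1 = \pi_i\,\Erw_i S_{\tau(i)}$ forces $\Erw_\pi X_1 > 0$ (the degenerate case $\Erw_i S_{\tau(i)} = +\infty$ being handled by truncating $X_n$ from above and letting the truncation level tend to infinity), whence $S_n \to +\infty$ a.s.\ by the MRW strong law of large numbers, contradicting $\liminf_n S_n = -\infty$ from (T3). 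Everything beyond this transfer step is an immediate consequence of the i.i.d.\ block structure of the embedded walk.
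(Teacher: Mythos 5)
Your reduction of the fixed-$i$ trichotomy to classical fluctuation theory for the ordinary random walk $(S_{\tau_n(i)})_{n\ge 0}$ is exactly what the paper does and is fine: the increments are iid under $\Prob_i$, so either they vanish a.s.\ (giving (T2')), or $S_{\tau_n(i)}\to+\infty$ a.s.\ (giving (T1')), or $\liminf_n S_{\tau_n(i)}=-\infty$ a.s.\ (giving (T3')), and these are mutually exclusive.

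The solidarity step, however, contains a genuine error. You claim that under (T3) the alternative (T1') is impossible, deducing from $S_{\tau_n(i)}\to+\infty$ a.s.\ that $\Erw_\pi X_1>0$ via a cycle identity, hence $S_n\to+\infty$ a.s., contradicting (T3). This conclusion is false, and the paper says so explicitly right after the proposition: ``(T3) may occur together with (T1') as well as with (T3')'', citing \cite[Example 6.2]{AlsBuck:16}; the infinite-petal flower chain reproduced in Remark \ref{rem1:a.s. convergence} has $\Pi_{\tau_n(0)}\to 0$ a.s.\ (so $S_{\tau_n(0)}\to+\infty$) while $\limsup_n|\Pi_n|=\infty$ a.s.\ (so the full MRW is of type (T3)). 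The cycle identity $\Erw_i S_{\tau(i)}=\Erw_i\tau(i)\,\Erw_\pi X_1$ requires $\Erw_\pi|X_1|<\infty$; in that example $\Erw_i S_{\tau(i)}\in(0,\infty)$ while $\Erw_\pi X_1^+=\Erw_\pi X_1^-=\infty$, because the large negative and positive increments cancel \emph{within} each excursion. Truncating $X_n$ from above does not repair this: there one has $\Erw_\pi(X_1\wedge c)=-\infty$ for every truncation level $c$, so no positive mean can be extracted. As a result, your argument proves nothing about solidarity in the case (T3), which is precisely the case where both (T1') and (T3') remain possible and one must show that the same alternative occurs for every $i$. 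That requires a direct comparison of the embedded walks at different states (interlacing the returns to $i$ with those to $j$), which is the content of the cited Lemmata 4.1 and 6.1 of \cite{AlsBuck:16}; it cannot be read off from Proposition \ref{prop:trichotomy MRW}.
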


We refer to \cite[Lemmata 4.1 and 6.1]{AlsBuck:16} for a proof of the solidarity assertion and note that it implies the equivalence of (T2) and (T2'). On the other hand, there is neither equivalence of (T1) and (T1'), nor of (T3) and (T3'). Namely, $(S_{n})_{n\ge 0}$ may be oscillating although its embedded RW $(S_{\tau_{n}(i)})_{n\ge 0}$ are all positive or negative divergent, see \cite[Example 6.2]{AlsBuck:16}. In other words, (T3) may occur together with (T1') as well as with (T3'). Of course, (T1) always implies (T1').

\subsection{Duality}\label{subsec:duality}

Since $(M_{n},A_{n+1},B_{n+1})_{n\ge 0}$ is stationary under $\Prob_{\pi}$, it can be extended to a doubly infinite stationary sequence $(M_{n},A_{n+1},B_{n+1})_{n\in\Z}$. Given $(M_{n})_{n\in\Z}$, the $(A_{n},B_{n})$ are of course still conditionally independent with the same conditional law as before.
The reversed chain $({}^{\#}M_{n})_{n\ge 0}:=(M_{-n})_{n\ge 0}$, also called dual of $(M_{n})_{n\ge 0}$, has transition matrix
$$ {}^{\#}P\ =\ \left({}^{\#}p_{ij}\right)_{i,j\in\cS}\quad\text{with}\quad{}^{\#}p_{ij}\ =\ \frac{\pi_{j}p_{ji}}{\pi_{i}}. $$
Similarly, the dual of $(M_{n},A_{n+1},B_{n+1})_{n\ge 0}$ (also a Markov chain) is given by
$$ ({}^{\#}M_{n},{}^{\#}A_{n+1},{}^{\#}B_{n+1})_{n\ge 0}=(M_{-n},A_{-n},B_{-n})_{n\ge 0}. $$
The index shift for the $(A,B)$-sequence ensures that its dual counterpart is also Markov-modulated. More precisely, its elements are conditionally independent given the dual chain $({}^{\#}M_{n})_{n\ge 0}$ with 
$$ {}^{\#}K_{ij}\ :=\ \Prob\left(({}^{\#}A_{n},{}^{\#}B_{n})\in\cdot|{}^{\#}M_{n-1}=i,\,{}^{\#}M_{n}=j\right) $$
satisfying the dual relation
$$ {}^{\#}K_{ij}\ =\ K_{ji}. $$
After these settings it is clear that the dual of the IFS generated by $(\Psi_{n})_{n\ge 1}$ is the IFS generated by the dual maps ${}^{\#}\Psi_{n}(t)={}^{\#}A_{n}t+{}^{\#}B_{n}=A_{-n+1}t+B_{-n+1},$ $n\ge 1$. It should then be observed that, when assuming \eqref{Brandt cond} with $\Prob=\Prob_{\pi}$, the random variable ${}^{\#}Z_{\infty}$ defined by \eqref{Zinftytag} may now be rewritten as
$$ {}^{\#}Z_{\infty}\ =\ \sum_{n\ge 1}\left(\prod_{k=1}^{n-1}{}^{\#}A_{k}\right){}^{\#}B_{n} $$
and thus be identified as the a.s. limit of the backward iterations ${}^{\#}\Psi_{1:n}(0)$. As a consequence, the limit law of the forward IFS $(\Psi_{n:1}(0))_{n\ge 1}$ does not generally coincide with limit law of the associated backward IFS $(\Psi_{1:n}(0))_{n\ge 1}$ as in the iid-case. By \eqref{eq:forward<->backward dual}, it rather equals the limit law of its \emph{backward dual} $({}^{\#}\Psi_{1:n}(0))_{n\ge 1}$. Let us finally point out that duality arguments completely fail to apply when studying distributional convergence of the forward sequence under $\Prob_{i}$ for $i\in\cS$ rather that $\Prob_{\pi}$. This is because \eqref{eq:forward<->backward dual} does no longer hold under $\Prob_{i}$ as already mentioned.

\subsection{Degeneracy}\label{subsec:degeneracy}

As mentioned after Theorem \ref{thm:G/M theorem}, degeneracy for iterations of iid random affine maps of generic form $\Psi(t)=At+B$ occurs if $Ac+B=c$ a.s. for some $c\in\R$ (cf. \cite{Vervaat:79} and \cite{GolMal:00}). In the Markov-modulated situation, the corresponding condition looks similar and yet different, namely
\begin{equation}\label{pi-degenerate}
\Prob_{\pi}(A_{1}c_{M_{1}}+B_{1}=c_{M_{0}})\,=\,1\quad\text{for suitable }c_{i}\in\R,\,i\in\cS.
\end{equation}
The condition also appears in \cite[Eq.\~(1.6)]{Roitershtein:07}. Its implications will be discussed in some detail in Section \ref{sec:degeneracy} as they will be of some relevance in connection with our main results. In particular, we will show there (see Props. \ref{prop:degeneracy consequences} and \ref{prop2:degeneracy consequences}) that \eqref{pi-degenerate} is equivalent to
\begin{equation}\label{degenerate}
\Prob_{i}(A_{1}^{i}c_{i}+B_{1}^{i}=c_{i})\ =\ 1\quad\text{for all }i\in\cS\text{ and suitable }c_{i}\in\R,
\end{equation}
with $(A_{1}^{i},B_{1}^{i})$ as in \eqref{def A^s,B^s}. In fact, ``\eqref{pi-degenerate}$\RA$\eqref{degenerate}'' can easily be checked, but the converse requires some work. We will further show (see Lemma \ref{lem:solidarity}) that \eqref{degenerate} already follows whenever $\Prob_{i}(A_{1}^{i}c_{i}+B_{1}^{i}=c_{i})=1$ for just one pair $(i,c_{i})\in\cS\times\R$.

\section{Main results}\label{sec:main results}

For the results below, we make the standing assumption (besides those already stated at the beginning of the Introduction) that (compare \eqref{nonzero})
\begin{equation}\label{standing assumption}
\Prob_{\pi}(A=0)\ =\ 0\quad\text{and}\quad\Prob_{\pi}(B=0)<1,
\end{equation}
where $(A,B)$ denotes a generic copy of the $(A_{n},B_{ n})$ under $\Prob_{\pi}$ which is independent of all other occurring random variables. A brief discussion at the end of this section will show that the situation when \eqref{standing assumption} fails is rather trivial.

\subsection{Convergence results for the backward iterations}\label{subsec:3.1}

Our first two theorems provide necessary and sufficient conditions for the almost sure and distributional convergence (under $\Prob_{i}$ for all $i\in\cS$) of the backward iteration $\Psi_{1:n}(Z_{0})$ for any initial value $Z_{0}$ (admissible in the second result). For $i\in\cS$ and $x\ge 0$, we put
\begin{align*}
&J_{i}(0)\,:=\,1\quad\text{and}\quad J_{i}(x)\,:=\,
\begin{cases}
\displaystyle{\frac{x}{\Erw_{i}(S_{\tau(i)}^{+}\wedge x)}},&\text{if }\Prob_{i}(S_{\tau(i)}\le 0)<1,\\[2mm]
\hfill x,&\text{otherwise},
\end{cases}\\
\shortintertext{and}
&\hspace{2.5cm}W^{i}\,:=\, \max_{1\le k\le\tau(i)}|\Pi_{k-1}B_{k}|.
\end{align*}

\begin{Theorem}\label{thm:a.s. convergence}
Suppose \eqref{standing assumption}. Then the following conditions are equivalent:
\begin{description}[(b)]\itemsep2pt
\item[(a)] $\lim_{n\to\infty} \Psi_{1:n}(0)=Z_{\infty}=\sum_{n\ge 1}\Pi_{n-1}\,B_{n}$ a.s. and $Z_{\infty}$ is a proper random variable. 
\item[(b)] $\lim_{n\to\infty} \Pi_{\tau_{n}(i)}=0$ a.s. and $\Erw_{i}J_{i}(\log^{+}W^{i})<\infty$ for some/all $i\in\cS$.
\item[(c)] $\Prob_{i}(|\Pi_{\tau(i)}|=1)<1$ for some/all $i\in\cS$ and $\limsup_{n\to\infty}|\Pi_{n-1}B_{n}|<\infty$ a.s.
\item[(d)] $\lim_{n\to\infty}\Pi_{n-1}\,B_n=0$ a.s.
\item[(e)] $\sum_{n\ge 1}|\Pi_{n-1} \,B_{n}|<\infty$ a.s.
\end{description}
Furthermore, $\Psi_{1:n}(Z_{0})$ converges a.s. to a proper random variable for any admissible $Z_{0}$ iff $\lim_{n\to\infty}\Pi_{n}=0$ a.s. and 
$\Erw_{i} J_{i}(\log^{+}W^{i})<\infty$ for some/all $i\in\cS$. In this case, the limit always equals $Z_\infty$.
\end{Theorem}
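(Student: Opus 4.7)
The natural plan is to reduce the Markov-modulated problem to the iid case by passing to the successive return times $\tau_{n}(i)$ to a fixed state $i\in\cS$. Under $\Prob_{i}$, the block maps $(\Psi_{n}^{i})_{n\ge 1}$ defined in Subsection~\ref{subsec:return times} form an iid iterated function system with generic pair $(A_{1}^{i},B_{1}^{i})$ given by \eqref{def A^s,B^s}, so Theorem~\ref{thm:G/M theorem} applied to this system characterizes the a.s.~convergence of the embedded subsequence $Z_{\tau_{n}(i)}=\Psi_{1:n}^{i}(0)$ by the conditions $\Pi_{\tau_{n}(i)}\to 0$ a.s.~and $\Erw_{i}J_{i}(\log^{+}|B_{1}^{i}|)<\infty$, since the random walk increment $-\log|A_{1}^{i}|=S_{\tau(i)}$ under $\Prob_{i}$ is exactly the variable whose truncated positive mean defines $J_{i}$.

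The core technical step is to bridge this embedded result to the full sequence $(Z_{n})$ and to replace $|B_{1}^{i}|$ by $W^{i}$. The key observation is that the absolute-value block sum $C^{i}:=\sum_{k=1}^{\tau(i)}|\Pi_{k-1}B_{k}|$ satisfies $W^{i}\le C^{i}\le\tau(i)\,W^{i}$, and that the tail series decomposes as $\sum_{n\ge 1}|\Pi_{n-1}B_{n}|=\sum_{m\ge 1}|\Pi_{\tau_{m-1}(i)}|\,C^{i,(m)}$ into iid blocks $C^{i,(m)}$ under $\Prob_{i}$. A second application of Theorem~\ref{thm:G/M theorem}, now to the auxiliary iid pair $(A_{1}^{i},C^{i})$, then shows that condition~(e) is equivalent to $\Pi_{\tau_{n}(i)}\to 0$ a.s.~and $\Erw_{i}J_{i}(\log^{+}C^{i})<\infty$. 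Combining $\log^{+}C^{i}=\log^{+}W^{i}+O(\log^{+}\tau(i))$ with the sub-additivity up to constants of $J_{i}$—which grows at most linearly—and the finiteness of $\Erw_{i}\log^{+}\tau(i)$ (from $\Erw_{i}\tau(i)=1/\pi_{i}<\infty$) yields the equivalence (b)$\LRA$(e).

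With (b)$\LRA$(e) in hand, the remaining implications are carried out along the cycle (a)$\RA$(d)$\RA$(c)$\RA$(b) and (e)$\RA$(a). Here (a)$\RA$(d) and (e)$\RA$(a) are immediate (convergence of the series respectively Cauchy); (d)$\RA$(c) rules out type~(T2$'$) because under null-homology $|\Pi_{n-1}|$ would stay in a bounded range along every state trajectory, so (d) would force $B_{n}\to 0$ along the Markov-modulated transitions, contradicting \eqref{standing assumption}; and (c)$\RA$(b) rules out type~(T3$'$) via the bound on $\limsup_{n}|\Pi_{n-1}B_{n}|$ combined with Proposition~\ref{prop:trichotomy embedded RW}, and then reads off the integrability by applying the converse direction (d)$\RA$(c) of Theorem~\ref{thm:G/M theorem} to the auxiliary pair $(A_{1}^{i},W^{i})$ treated blockwise. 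Solidarity in $i$ of (b) and (c) is immediate from Proposition~\ref{prop:trichotomy embedded RW} together with the state-independence of (a), (d), and (e). For the closing assertion, $\Psi_{1:n}(Z_{0})=\Pi_{n}Z_{0}+Z_{n}$ for any admissible $Z_{0}$: if $\Pi_{n}\to 0$ a.s.—strictly stronger than $\Pi_{\tau_{n}(i)}\to 0$ as it excludes oscillation within return blocks—and $\Erw_{i}J_{i}(\log^{+}W^{i})<\infty$, then both summands converge a.s.~with limit $Z_{\infty}$; conversely, a.s.~convergence for every admissible $Z_{0}$ (in particular for $Z_{0}=0$ and for a nonzero constant) forces $\Pi_{n}\to 0$ a.s.

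The main obstacle, as already indicated, lies in the bridging step of the second paragraph. Within-cycle maxima can be much larger than the signed block sum $|B_{1}^{i}|$ because of cancellations, so a single application of Theorem~\ref{thm:G/M theorem} to $(A_{1}^{i},B_{1}^{i})$ only delivers subsequence convergence of $Z_{\tau_{n}(i)}$ and misses the correct integrability criterion needed for the full sequence $(Z_{n})$. Accessing the sharp $W^{i}$-condition therefore requires a second invocation of Theorem~\ref{thm:G/M theorem} applied to the auxiliary pair $(A_{1}^{i},C^{i})$, together with a careful translation of integrability criteria that absorbs the $\log^{+}\tau(i)$ correction into $J_{i}$ via the tail behaviour of $\tau(i)$ under $\Prob_{i}$.
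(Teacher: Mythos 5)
Your proposal is correct in substance but takes a genuinely different route through the two nontrivial implications. The paper proves ``(c)$\RA$(b)'' by contraposition, invoking Erickson's lemma for the embedded random walk $(S_{\tau_{n}(i)})_{n\ge 0}$ to show that $\Erw_{i}J_{i}(\log^{+}W^{i})=\infty$ forces $\limsup_{n}|\Pi_{n-1}B_{n}|=\infty$, and proves ``(b)$\RA$(e)'' directly by establishing the stronger exponential decay $e^{cn}\Pi_{n}B_{n+1}\to 0$ a.s.\ via Kesten's trichotomy for $S_{\tau_{n}(i)}/\tau_{n}(i)$ and an estimate from \cite{AlsBuck:16} on $\log^{+}W_{n+1}^{i}/S_{\tau_{n}(i)}$. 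You instead package both steps as applications of the iid Theorem~\ref{thm:G/M theorem} to auxiliary block pairs: $(A_{1}^{i},W^{i})$ for ``(c)$\RA$(b)'' (using that $|\Pi_{\tau_{m-1}(i)}|W_{m}^{i}=\max_{\tau_{m-1}(i)<k\le\tau_{m}(i)}|\Pi_{k-1}B_{k}|$, so the two limsups coincide), and $(A_{1}^{i},C^{i})$ with $C^{i}=\sum_{k\le\tau(i)}|\Pi_{k-1}B_{k}|$ for ``(b)$\LRA$(e)'', absorbing the gap between $W^{i}$ and $C^{i}$ via $W^{i}\le C^{i}\le\tau(i)W^{i}$, the subadditivity and at-most-linear growth of $J_{i}$, and $\Erw_{i}\log^{+}\tau(i)\le\Erw_{i}\tau(i)<\infty$. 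This is a clean and legitimate reduction that delegates all fluctuation theory to the iid result; its only cost is that it loses the exponential rate in (e) (not needed for the theorem) and requires checking the nondegeneracy hypothesis $\Prob_{i}(W^{i}=0)<1$ for the auxiliary pairs, which does follow from \eqref{standing assumption} and irreducibility but which you should state.

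Two small points need tightening. First, in ``(d)$\RA$(c)'' the phrase about $|\Pi_{n-1}|$ staying bounded along trajectories is imprecise for countable $\cS$ (the homology function $g$ need not be bounded); the correct argument, as in the paper, is to look at the iid variables $\Pi_{\tau_{n}(i)}B_{\tau_{n}(i)+1}$ with $|\Pi_{\tau_{n}(i)}|=1$ under (T2$'$) and apply Borel--Cantelli. Second, in the converse of the final assertion, a.s.\ convergence of $\Pi_{n}Z_{0}$ for a nonzero constant $Z_{0}$ only yields that $\Pi_{n}$ converges to some finite limit, not that the limit is $0$; one must still exclude a nonzero limit, which (as in the paper) forces $\Prob_{\pi}(A=1)=1$ and then, via the MRW $\sum_{k\le n}B_{k}$ being convergent and hence null-homologous with $g\equiv 0$, the contradiction $\Prob_{\pi}(B=0)=1$. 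As written, your ``forces $\Pi_{n}\to 0$ a.s.'' skips this step.
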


\begin{Rem}\label{rem1:a.s. convergence}\rm
The last assertion of the theorem is easily verified as follows: Since (b) ensures the a.s. convergence of $\Psi_{1:n}(0)$ to $Z_{\infty}=\sum_{n\ge 1}\Pi_{n-1}B_{n}$ and since
$$ \Psi_{1:n}(Z_{0})\ =\ \Pi_{n}\,Z_{0}\ +\ \Psi_{1:n}(0), $$
we see that $\Pi_{n}\to 0$ a.s. does indeed constitute the required extra condition for the asserted equivalence. On the other hand, this condition is not a consequence of (d) as one may expect at first glance. Here is a simple counterexample:
Let $(M_{n})_{n\ge 0}$ be a Markov chain on $\N_{0}$ which, when in state 0, either stays there with probability $p_{00}>0$, or picks a state $i\in\N$ with probability $p_{0i}>0$. When in state $i\in\N$, it always moves back to 0, thus $p_{i0}=1$. In essence, this is the infinite-petal flower chain introduced in \cite[Example 6.2]{AlsBuck:16}, the name being chosen there because the transition graph looks like a flower with infinitely many petals, and it is clearly ergodic. Define further
\begin{align*}
(A_{n},B_{n})\ :=\ 
\begin{cases}
\hfill (1,1),&\text{if }(M_{n-1},M_{n})=(0,0),\\[2pt]
\hfill\big(\exp(p_{0i}^{-1}),1\big),&\text{if }(M_{n-1},M_{n})=(0,i),\,i\in\N\\[2pt]
\big(\exp(-p_{0i}^{-1})/2,\, \exp(-p_{0i}^{-1})\big),&\text{if }(M_{n-1},M_{n})=(i,0),\,i\in\N.
\end{cases}
\end{align*}
Then $\lim_{n\to\infty} \Pi_{\tau_{n}(0)}=0$ a.s. and $\lim_{n\to\infty} \Pi_{n-1}B_{n}=0$ a.s. are obvious, and one can readily verify by a Borel-Cantelli-type argument (cf.~\cite{AlsBuck:16}) that $\limsup_{n\to\infty}\Pi_{n}=\infty$ a.s.

\vspace{.05cm}
Let us finally note that (e) trivially implies (a) which in turn trivially implies (d). Hence, the proof of the theorem is complete if we show the equivalence of assertions (b)--(e).
\end{Rem}

\begin{Rem}\label{rem2:a.s. convergence}\rm
A particular outcome of Theorem \ref{thm:a.s. convergence} is that validity of $\Erw_{i}J_{i}(\log^{+}W^{i})<\infty$ for some $i\in\cS$ implies the very same for all $i\in\cS$. Indeed, the proof of the theorem (see ``(c)$\RA$(b)'') in Section \ref{sec:proof 1st thm} will show that failure of this condition for some $i\in\cS$ always entails $\limsup_{n\to\infty}|\Pi_{n-1}B_{n}|=\infty$ a.s. and thus failure of (d).
\end{Rem}

\vspace{.1cm}
Turning to distributional convergence, the picture changes through the fact that, if a.s. convergence fails, the limit law of $Z_{n}$ may or may not depend on the law of the initial value $Z_{0}$. Before stating the result, let us define
$$ \wh{\tau}(i)\ :=\ \inf\{\tau_{n}(i):\Pi_{\tau_{n}(i)}=1\} $$
for $i\in\cS$ which we will need in the case when $\Prob_{i}(|A_{1}^{i}|=1)=\Prob_{i}(|\Pi_{\tau(i)}|=1)=1$. Due to the aperiodicity of $\tau(i)$ it is not difficult to verify (see Lemma \ref{lem:lattice-type whtau(i)}) that $\wh{\tau}(i)$ is then either of the same type or 2-periodic.

\begin{Theorem}\label{thm:weak convergence}
Suppose \eqref{standing assumption}. Given $i\in\cS$ and an admissible $Z_{0}$, $\Prob_{i}(\Psi_{1:n}(Z_{0})\in\cdot)$ converges weakly to some $Q_{i}$ iff one of the following conditions is fulfilled:
\begin{description}[(b)]\itemsep3pt
\item[(a)] $\lim_{n\to\infty}\Pi_{\tau_{n}(i)}=0$ a.s. and $\Erw_{i} J_{i}(\log^{+}|B_{1}^{i}|)<\infty$. In this case, $Q_{i}=\Prob_{i}(Z_{\infty}\in\cdot)$ and $\Psi_{1:n}(Z_{0})\to Z_{\infty}$ in $\Prob_{i}$-probability.
\item[(b)] $\Prob_{i}(|A_{1}^{i}|=1)=1$, \eqref{pi-degenerate} and one of the following two conditions hold:
\begin{description}[(b.2)]\itemsep2pt
\item[(b.1)] $\Prob_{i}(\wh{\tau}(i)\in\cdot)$ is aperiodic.
\item[(b.2)] $\Prob_{i}(\wh{\tau}(i)\in\cdot)$ is 2-periodic and the weak limit of $\Pi_{2n}(Z_{0}-c_{M_{2n}})$ under $\Prob_{i}$ exists and is symmetric.
\end{description}
\item[(c)] $\limsup_{n\to\infty}|\Pi_{\tau_{n}(i)}|=\infty$ a.s., \eqref{pi-degenerate} and one of the following two conditions hold:
\begin{description}[(c.2)]\itemsep2pt
\item[(c.1)] $\Pi_{n}\xrightarrow{\Prob_{i}} 0$. In this case, $Q_{i}=\delta_{c_{i}}$ and $\Psi_{1:n}(Z_{0})\xrightarrow{\Prob_{i}}c_{i}$.
\item[(c.2)] $\limsup_{n\to\infty}\Prob_{i}(|\Pi_{n}|>a)>0$ for some $a>0$, $c_{j}=c$ for all $j\in\cS$ and $Z_{0}=c$ $\Prob_{i}$-a.s. for some $c\in\R$. In this case, $Q_{i}=\delta_{c}$ and $\Psi_{1:n}(Z_{0})=c$ $\Prob_{i}$-a.s. for all $n\ge 0$.
\end{description}
\end{description}
Moreover, any of (a), (b.1) and (c.1), if valid for some $i\in\cS$, holds true for all $i$. Finally, if (a) and \eqref{pi-degenerate} both fail, then
$$ |\Psi_{1:n}(Z_0)|\ \xrightarrow{\Prob_\pi}\ \infty $$
for any admissible $Z_{0}$.
\end{Theorem}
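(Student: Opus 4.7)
The overall plan is to subsample at the return times $\tau_n(i)$ of the driving chain to state $i$, exploiting that under $\Prob_i$ the sequence $(A_n^i, B_n^i)_{n\ge 1}$ is iid with $\Psi_{1:\tau_n(i)}(Z_0) = \Psi_{1:n}^i(Z_0)$ (Subsection \ref{subsec:return times}), and then to apply the Vervaat--Goldie--Maller classification in Theorem \ref{thm:fixed points IID} together with the embedded trichotomy of Proposition \ref{prop:trichotomy embedded RW}. The three regimes (T1'), (T2'), (T3') match cases (a), (b), (c) respectively. In (T1') one has $\Pi_{\tau_n(i)} \to 0$ a.s.\ and Theorem \ref{thm:fixed points IID}(a) applied to $(A_n^i, B_n^i)_{n\ge 1}$ identifies $\Erw_i J_i(\log^+|B_1^i|) < \infty$ as the right moment condition (with $J_i$ being the Goldie--Maller function for the iid increment $-\log|A_1^i| = S_{\tau(i)}$) and produces the subsampled limit $Z_\infty$. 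In (T2') we have $|A_1^i| = 1$ a.s., so Theorem \ref{thm:fixed points IID}(b) enforces \eqref{degenerate}, which by Section \ref{sec:degeneracy} is equivalent to \eqref{pi-degenerate}. In (T3') Theorem \ref{thm:fixed points IID}(c) again forces the degeneracy and yields the subsampled point mass $\delta_{c_i}$. In the $\Rightarrow$ direction, weak convergence of $\Psi_{1:n}(Z_0)$ implies weak convergence along $\tau_n(i)$, so Theorem \ref{thm:fixed points IID} forces one of the three cases; in the $\Leftarrow$ direction the additional conditions (b.1)/(b.2), (c.1)/(c.2) are exactly what is needed to lift subsequence convergence to full-sequence convergence.

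The passage from convergence along $\tau_n(i)$ to convergence along all $n$ is carried out as follows. In case (a), writing $\Psi_{1:m}(Z_0) = \Psi_{1:\tau_n(i)}(\Psi_{\tau_n(i)+1:m}(Z_0))$ for $\tau_n(i) \le m < \tau_{n+1}(i)$ and using the decay of $\Pi_{\tau_n(i)}$ together with tightness of the within-excursion fluctuations, one obtains $|\Psi_{1:m}(Z_0) - \Psi_{1:\tau_n(i)}(Z_0)| \xrightarrow{\Prob_i} 0$ and hence convergence in $\Prob_i$-probability of the full sequence to $Z_\infty$. Under \eqref{pi-degenerate}, the identity $\Psi_n(c_{M_n}) = c_{M_{n-1}}$ telescopes to
\[
\Psi_{1:n}(Z_0)\ =\ c_{M_0}\ +\ \Pi_n(Z_0 - c_{M_n}),
\]
which in case (c.1) ($\Pi_n \xrightarrow{\Prob_i} 0$, $(c_{M_n})$ tight by recurrence of $M_n$) yields the limit $\delta_{c_i}$, and in case (c.2) forces $Z_0 = c_{M_n}$ a.s.\ whenever $|\Pi_n|$ is large, hence by irreducibility of $(M_n)$ the conclusion $c_j \equiv c$ and $Z_0 = c$ $\Prob_i$-a.s. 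In case (b), one has $|\Pi_n| = \exp(g(M_0)-g(M_n))$ by the null-homology identity \eqref{NH2}, so the sign process $\sign(\Pi_n)$ is a $\{\pm 1\}$-valued Markov chain sampled at the $\tau_n(i)$ with generic return-to-$+1$ time $\wh\tau(i)$; the lattice-type result of Lemma \ref{lem:lattice-type whtau(i)} then distinguishes the aperiodic subcase (b.1) (joint chain $(M_n,\sign(\Pi_n))$ aperiodic, weak convergence follows from ergodicity) from the $2$-periodic subcase (b.2), where matching the even- and odd-index subsequential limits of $\Pi_{2n}(Z_0-c_{M_{2n}})$ becomes equivalent to symmetry of the weak limit.

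Solidarity of (a), (b.1) and (c.1) across states is handled separately. Solidarity of the underlying trichotomy is Proposition \ref{prop:trichotomy embedded RW}; solidarity of $\Erw_i J_i(\log^+|B_1^i|)<\infty$ follows from a cycle-swapping argument as in Remark \ref{rem2:a.s. convergence}; solidarity of (b.1) is the state-independence of the joint aperiodicity of $(M_n, \sign(\Pi_n))$; and solidarity of (c.1) is the ergodic equivalence of $\Pi_n \xrightarrow{\Prob_i} 0$ across $i$. The final divergence assertion is then obtained by noting that when (a) and \eqref{pi-degenerate} both fail, cases (b) and (c) are blocked for every $i$, whence Theorem \ref{thm:G/M theorem} applied to the iid subsampled system gives $|\Psi_{1:n}^i(Z_0)| \xrightarrow{\Prob_i} \infty$ for each $i$; the interpolation argument of the previous paragraph (now showing that within-excursion perturbations cannot cancel the blow-up) extends the divergence to the full sequence and averaging against $\pi$ yields $|\Psi_{1:n}(Z_0)| \xrightarrow{\Prob_\pi} \infty$.

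The main obstacle is the lift from subsampled to full-sequence convergence in case (b): unlike the iid setting, $|\Pi_n|$ does not decay, so one must track the joint Markov chain $(M_n, \sign(\Pi_n))$ and its period. This produces the genuinely new $2$-periodic subcase (b.2), which has no iid analogue and requires the explicit symmetry condition on the weak limit of $\Pi_{2n}(Z_0 - c_{M_{2n}})$ to reconcile the even- and odd-indexed limit laws.
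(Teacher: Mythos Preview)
Your overall architecture matches the paper's: split according to the trichotomy of Proposition~\ref{prop:trichotomy embedded RW}, apply the Goldie--Maller classification to the subsampled iid system $(A_n^i,B_n^i)$, and then lift back to the full sequence via the degeneracy identity $\Psi_{1:n}(Z_0)=c_{M_0}+\Pi_n(Z_0-c_{M_n})$ in cases (b) and (c). Your treatment of (b.1)/(b.2) via the augmented chain $(M_n,\sign(\Pi_n))$ and of (c.1)/(c.2) via the displayed identity is essentially the paper's.

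The genuine gap is in the final divergence assertion, specifically in regime (T3'). You write that once $|\Psi_{1:n}^i(Z_0)|\xrightarrow{\Prob_i}\infty$ is known for the subsampled system, ``the interpolation argument of the previous paragraph \ldots\ extends the divergence to the full sequence.'' But that interpolation argument used $\Pi_{\tau_{N(n)}(i)}\to 0$ to kill the within-excursion remainder, and in (T3') one has $\limsup_n|\Pi_{\tau_n(i)}|=\infty$ a.s., so the remainder $\Pi_{\tau_{N(n)}(i)}\cdot(\text{excursion})$ is not controlled at all and can in principle cancel the blow-up of $\Psi_{1:N(n)}^i(Z_0)$. The paper flags this explicitly (``there seems to be no easy argument to convert this into $|\Psi_{1:n}(Z_0)|\xrightarrow{\Prob_\pi}\infty$'') and instead proves a separate nondegeneracy lemma (Lemma~\ref{lem:B_1 nondegenerate given M and Pi}: for some $m$, $\Psi_{1:m}(0)$ is not a.s.\ determined by $M_0,M_m,\Pi_m$) and then runs a conditional symmetrization argument (Lemma~\ref{lem:Psi_1:n(0) to infty in (c)}): one introduces a copy $(B_n')$ conditionally iid with $(B_n)$ given $(M_{n-1},A_n)$, shows the symmetrized system still violates degeneracy, and compares $\Psi_{1:n}(Z_0)$ with $\wh\Psi_{1:n}(Z_0)$ via Jensen's inequality to transfer the subsampled divergence to the full sequence. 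None of this is interpolation in your sense, and it is the technical heart of this case.

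A smaller but related gap occurs in (T2'): there $|\Pi_n|=a_{M_0}/a_{M_n}$ stays bounded away from $0$ along recurrent states, so again the excursion remainder is not small and your interpolation does not apply. The paper handles this by reducing to the ordinary random walk $\sum_k\wh B_k^i$ at the $\wh\tau_n(i)$ and then invoking the MRW concentration lemma (Lemma~\ref{lem:concentration MRW}, via Kolmogorov--Rogozin) to pass from divergence along that renewal subsequence to divergence along all $n$ (Lemma~\ref{lem:Psi_1:n(0) to infty in (b)}). Your sketch does not supply a substitute for either of these two arguments.
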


\begin{Rem}\rm
A description of the limit laws under Condition (b) can also be given but is postponed because it requires further notation, see Lemma \ref{lem:final lemma case 2}.
\end{Rem}

\begin{Rem}\label{rem:weak convergence}\rm
We emphasize that $\Psi_{1:n}(Z_{0})$ converges in distribution for \emph{all} admissible $Z_{0}$ and under \emph{all} $\Prob_{i}$ if (a), (b.1), or (c.1) is valid for some $i\in\cS$. On the other hand, if (c.2) holds, then the distributional convergence of $\Psi_{1:n}(Z_{0})$ under some $\Prob_{i}$ does not ensure the same under any other $\Prob_{j}$, for $\Prob_{i}(Z_{0}=c)=1$ may be valid only for $i$ in a proper subset of $\cS$. A similar disclaimer applies if (b.2) is valid. Just take $(A_{1},B_{1})=(-1,2c)$ for some $c\in\R$, thus $A_{1}c+B_{1}=c$, and an admissible $Z_{0}$ such that $Z_{0}-c$ is symmetric under some one $\Prob_{i}$ but not symmetric under $\Prob_{j}$, $j\ne  i$. Then $\Psi_{1:n}(Z_{0})=(-1)^{n}(Z_{0}-c)+c$ converges in distribution only under $\Prob_{i}$.
\end{Rem}

\begin{Rem}\label{rem:weak <->a.s. convergence}\rm
As pointed out in Subsection \ref{subsec:fluctuation}, $\Pi_{\tau_{n}(i)}\to 0$ and $\limsup_{n\to\infty}|\Pi_{n}|=\infty$ a.s. may hold together. Assuming this and additionally $\Erw_{i}J_{i}(\log^{+}W^{i})<\infty$ for all $i\in\cS$, which obviously implies $\Erw_{i}J_{i}(\log^{+}|B^{i}|)<\infty$ for all $i\in\cS$, we infer $\Psi_{1:n}(0)\to Z_{\infty}$ a.s. by Theorem \ref{thm:a.s. convergence} and $\Psi_{1:n}(Z_{0})\xrightarrow{\Prob_{i}} Z_{\infty}$ for any admissible $Z_{0}$ and any $i$ by Theorem \ref{thm:weak convergence}. On the other hand, Theorem \ref{thm:weak convergence} also asserts the existence of an admissible $Z_{0}$ such that $\Psi_{1:n}(Z_{0})$ does not converge a.s.
\end{Rem}

\begin{Rem}\label{rem:finite S}\rm
If the driving chain has finite state space $\cS$ and \eqref{pi-degenerate} is ruled out, then the almost sure convergence and the stochastic convergence of $\Psi_{1:n}(Z_{0})$ under any $\Prob_{i}$ (and thus under $\Prob_{\pi}$) are equivalent for admissible $Z_{0}$. To see this, we first note that $\Psi_{1:n}(Z_{0})\xrightarrow{\Prob_{\pi}}Z_{\infty}$ implies $\Pi_{\tau_{n}(i)}\to 0$ a.s. and $\Erw_{i}J_{i}(\log^{+}|B^{i}|)<\infty$. Now use Theorem \ref{thm:G/M theorem} for the iid linear maps $\Psi_{1}^{i},\Psi_{2}^{i},\ldots$ defined in Subsection \ref{subsec:return times} to infer $\Psi_{1:n}^{i}(Z_{0})\to Z_{\infty}$ a.s. for all $i\in\cS$. Finally, putting $N_{i}(n):=\sup_{k\ge 0}:\tau_{k}(i)\le n\}$ for $i\in\cS$, it follows that
$$ \left|\Psi_{1:n}(Z_{0})-Z_{\infty}\right|\ \le\ \max_{i\in\cS}\left|\Psi_{1:N_{i}(n)}(Z_{0})-Z_{\infty}\right|\ \to\ 0\quad\text{a.s.} $$
because $\cS$ is finite, $\lim_{n\to\infty}\min_{i\in\cS}N_{i}(n)=\infty$ a.s. and $\min_{i\in\cS}|N_{i}(n)-n|=0$ for all $n$. As a by-product, the equivalence of \ref{thm:a.s. convergence}(b) and \ref{thm:weak convergence}(a) is obtained.
\end{Rem}

\begin{Rem}\label{rem:MRW property}\rm
If $\Prob_{\pi}(A=1)=1$ and \eqref{degenerate} fails, then $\Psi_{1:n}(0)=\sum_{k=1}^{n}B_{k}$ forms a nontrivial, i.e. not null-homologous MRW. By the last assertion of Theorem \ref{thm:weak convergence}, we infer that any such MRW converges to $\infty$ in $\Prob_{\pi}$-probability, but this will actually be needed for the proof of that assertion and therefore be proved independently in Lemma \ref{lem:concentration MRW}.
\end{Rem}

\subsection{The fixed-point property}\label{subsec:FP property}

In the classical case of iid $(A_{n},B_{n})$, any distributional limit $Q$, say, of $\Psi_{1:n}(Z_{0})$ must satisfy the stochastic fixed point equation (SFPE) $R\eqdist\Psi(R')=AR'+B$ as already mentioned (see \eqref{SFPE iid}). Here $R$ and $R'$ have law $Q$ and $(A,B)$ is independent of $R'$. The fixed-point property refers to $\Psi$ interpreted as a map on the set $\cP(\R)$ of probability distributions on $(\R,\cB(\R))$ which maps any $Q$ from this set to the law of $AR'+B$ with $R'\eqdist Q$ independent of $(A,B)$. Obviously, \eqref{SFPE iid} may then also be stated as $\Psi Q=Q$.

\vspace{.1cm}
In the Markov-modulated case considered here, the fixed-point property of the distributional limits of $\Psi_{1:n}(Z_{0})$ requires adjustment even in the most comfortable situation when $\Psi_{1:n}(Z_{0})$ converges a.s. to some $Z_{\infty}$. Since $\Psi_{2:n}(Z_{0})$ then clearly converges a.s. to some $Z_{\infty}'$ having the same law as $Z_{\infty}$ under $\Prob_{\pi}$, the recursive relation
$$ \Psi_{1:n}(Z_{0})\ =\ \Psi_{1}(\Psi_{2:n}(Z_{0})) $$
in combination with the continuity of $\Psi_{1}$ leads to the conclusion that
\begin{equation}\label{SFPE false 2}
Z_{\infty}\ =\ \Psi_{1}(Z_{\infty}')\ =\ A_{1}Z_{\infty}'+B_{1}\quad\text{a.s.}
\end{equation}
which in turn suggests validity of an even stronger relation than \eqref{SFPE iid} (at least under $\Prob_{\pi}$), for $\eqdist$ appears to be replaced with an equality in terms of random variables. However, the reader should observe that $Z_{\infty}'$, though a copy of $Z_{\infty}$, is \emph{not} independent of $(A_{1},B_{1})$ whence \eqref{SFPE false 2} becomes actually ambiguous when stated as a distributional relation.

\vspace{.1cm}
The crucial point is that the Markovian environment must be taken into account with the result that the fixed point property refers now to a map that acts on probability kernels $P$ instead of distributions. Let $\cP(\cS,\R)$ denote the set of such kernels from $\cS$ to $\R$ and interpret $\Psi_{1}$ as a map on $\cP(\cS,\R)$ which sends an element $P$ to the conditional law of $A_{1}R'+B_{1}$ given $M_{0}$ when $R'$ and $(A_{1},B_{1})$ are conditionally independent given $(M_{0},M_{1})$ and $\cL(R'|M_{0}=i,M_{1}=j)$, the conditional law of $R'$ given $M_{0}=i,M_{1}=j$, equals $P(j,\cdot)$ for any $i,j\in\cS$. If
\begin{equation}\label{eq:SFPE for Psi}
\Psi_{1}P(i,\cdot)\ =\ P(i,\cdot)\quad\text{for all }i\in\cS, 
\end{equation}
then $P$ is called a solution to this equation, or a fixed point of $\Psi_{1}$, and it is now readily seen that, if $P(i,\cdot)$ equals the conditional of $Z_{\infty}$ given $M_{0}=i$, then $P$ is indeed a fixed point of $\Psi_{1}$.

\vspace{.1cm}
Given such $P$ of $\Psi_{1}$, let $R$ be a random variable with conditional law $P(i,\cdot)$ given $M_{0}=i$. Then
\begin{align*}
P(i,E)\ &=\ \Prob_{i}(R\in E)\ =\ \Erw_{i}\1_{E}(\Psi_{1}(R'))\\
&=\ \sum_{j\in\cS}p_{ij}\int\Prob_{i}(\Psi_{1}(r)\in E|M_{1}=j)\ P(j,dr)
\end{align*}
for any measurable $E\subset\R$ and $i\in\cS$, which in terms of random variables may be stated as
\begin{equation}\label{Markovian SFPE}
R^{i}\ \eqdist\ \Psi_{1}(R^{M_{1}})\ =\ \sum_{j\in\cS}\1_{\{M_{1}=j\}}\,\Psi_{1}(R^{j})\quad\text{under }\Prob_{i}
\end{equation}
for all $i$, where $R^{i}$ has distribution $P(i,\cdot)$ under \emph{any} $\Prob_{j}$, $j\in\cS$, and is independent of all other occurring random variables. As one can easily derive by iteration of \eqref{Markovian SFPE}, $R^{i}$ is also a solution to the ordinary SFPE
\begin{equation}\label{SFPE2}
R^{i}\ \eqdist\ A_{1}^{i}R^{i}+B_{1}^{i}
\end{equation}
for each $i\in\cS$. In other words, $P$ forms a solution to the system of ordinary SFPE given by \eqref{SFPE2} for all $i\in\cS$. On the other hand, this is a weaker property than \eqref{Markovian SFPE}. In fact, case (C3) of the subsequent theorem provides an instance where \eqref{SFPE2} for all $i\in\cS$ is solved by any kernel $P$, whereas this is not the case for \eqref{Markovian SFPE} which puts an additional constraint on the relation between the $P(i,\cdot)$ for $i\in\cS$.

\begin{Theorem}\label{thm:fixed-point property}
Under the stated assumptions, suppose that a fixed point $P\in\cP(\cS,\R)$ of $\Psi_{1}$ exists and that $R$ denotes a random variable with conditional law $P(i,\cdot)$ given $M_{0}=i$. Then one of the following cases occur:
\begin{description}[(C3)]\itemsep3pt
\item[(C1)] $\Pi_{\tau_{n}(i)}\to 0$ a.s. and $\Erw_{i} J_{i}(\log^{+}|B_{1}^{i}|)<\infty$ all $i\in\cS$. In this case, $P$ is the unique fixed point of $\Psi_{1}$ and equals the law of the perpetuity $Z_{\infty}=\sum_{n\ge 1}\Pi_{n-1}B_{n}$ under $\Prob_{i}$.
\item[(C2)] $\Prob_{i}(A_{1}^{i}=1)<\Prob_{i}(|A_{1}^{i}|=1)=1$ for all $i\in\cS$. In this case, \eqref{pi-degenerate} holds for a unique sequence $(c_{i})_{i\in\cS}$ and there exists a positive sequence $(a_{i})_{i\in\cS}$ such that $P(i,\cdot)$ equals the law of $a_{i}X+c_{i}$ for a random variable $X$ with symmetric distribution $F$ on $\R$. Equivalently,
$$ R\ =\ a_{M_{0}}X+c_{M_{0}} $$
for a symmetric random variable $X$ independent of $M_{0}$.
\item[(C3)] $\Prob_{i}(A_{1}^{i}=1,B_{1}^{i}=0)=1$ for all $i\in\cS$. In this case, there exist an infinite class $\cC$ of sequences $(c_{i})_{i\in\cS}$ that can be parametrized by the value of $c_{i_{0}}$ for any fixed $i_{0}\in\cS$, a positive sequence $(a_{i})_{i\in\cS}$ and a $\{\pm1\}$-valued sequence $(\sigma_{i})_{i\in\cS}$ such that $P(i,\cdot)$ equals the law of $a_{i}\sigma_{i}X+c_{i}$ for a random variable $X$ with arbitrary distribution $F$ on $\R$. Equivalently, 
$$ R\ =\ a_{M_{0}}\sigma_{M_{0}}X+c_{M_{0}} $$ 
for a random variable $X$ independent of $M_{0}$.
\item[(C4)] $\limsup_{n\to\infty}|\Pi_{\tau_{n}(i)}|=\infty$ a.s.  for all $i\in\cS$ In this case, $\Prob_{i}(A_{1}^{i}c_{i}+B_{1}^{i}=c_{i})=1$ for a unique sequence $(c_{i})_{i\in\cS}$ and $P(i,\cdot)=\delta_{c_{i}}$ equals the unique fixed point of $\Psi_{1}$.
\end{description}
\end{Theorem}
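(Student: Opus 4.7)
The first step is to realize the fixed point $P$ via a random variable $R$ with $\cL(R|M_{0}=i)=P(i,\cdot)$, defined on an enlarged probability space so that $R$ is independent of $(A_{n},B_{n},M_{n})_{n\ge 1}$ given $M_{0}$. The fixed-point equation \eqref{eq:SFPE for Psi} translates into the Markovian SFPE \eqref{Markovian SFPE}, and iterating this relation along a full excursion from state $i$ back to $i$ yields $R\eqdist A_{1}^{i}R+B_{1}^{i}$ under $\Prob_{i}$ with $(A_{1}^{i},B_{1}^{i})$ independent of the copy of $R$ on the right, as in \eqref{SFPE2}. Thus each marginal $P(i,\cdot)$ is a distributional fixed point of the classical iid affine SFPE, and Theorem \ref{thm:fixed points IID} applies to it.

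By the solidarity assertion of Proposition \ref{prop:trichotomy embedded RW}, the alternatives (T1'), (T2'), (T3') hold simultaneously at all $i\in\cS$, giving a natural case split into (C1), (C2)/(C3), (C4). In case (T1') I would invoke Theorem \ref{thm:fixed points IID}(a) to obtain the unique iid fixed point, identified via the telescoping $\sum_{n\ge 1}\prod_{k=1}^{n-1}A_{k}^{i}\cdot B_{n}^{i}=Z_{\infty}$ a.s. under $\Prob_{i}$ with the $\Prob_{i}$-law of $Z_{\infty}$, yielding (C1). In case (T3'), Theorem \ref{thm:fixed points IID}(c) gives $P(i,\cdot)=\delta_{c_{i}}$ with $A_{1}^{i}c_{i}+B_{1}^{i}=c_{i}$ a.s., and plugging $R=c_{M_{0}}$ into \eqref{Markovian SFPE} forces $A_{1}c_{M_{1}}+B_{1}=c_{M_{0}}$ a.s., i.e.\ \eqref{pi-degenerate}; the uniqueness of $(c_{i})$ follows from Section \ref{sec:degeneracy} (Propositions \ref{prop:degeneracy consequences} and \ref{prop2:degeneracy consequences}). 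In case (T2'), $|A_{1}^{i}|=1$ a.s.; if $\Prob_{i}(A_{1}^{i}=1)<1$ we land in Theorem \ref{thm:fixed points IID}(b.1) and obtain (C2) with a unique $(c_{i})$; if $\Prob_{i}(A_{1}^{i}=1)=1$, then $R\eqdist R+B_{1}^{i}$ with an independent $B_{1}^{i}$ forces $B_{1}^{i}=0$ a.s., yielding (C3) and a one-parameter family of admissible $(c_{i})$ solving \eqref{pi-degenerate}.

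The main difficulty lies in (C2) and (C3): translating the fact that each centered marginal $P(i,\cdot)-c_{i}$ is a symmetric (resp.\ arbitrary) law into the stronger statement that they are all affine images of a single random variable $X$ independent of $M_{0}$. The plan is to use the Markovian SFPE \eqref{Markovian SFPE} step by step: writing $R-c_{M_{0}}=a_{M_{0}}\sigma_{M_{0}}X$ and using $A_{1}c_{M_{1}}+B_{1}=c_{M_{0}}$ a.s.\ reduces \eqref{Markovian SFPE} to the requirement that, conditionally on $M_{0}=i,M_{1}=j$, the variable $(A_{1}a_{j}\sigma_{j})/(a_{i}\sigma_{i})$ equals $\pm 1$ a.s., with the sign chosen so that the law of $X$ is preserved. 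One then fixes a reference state $i_{0}$, sets $(a_{i_{0}},\sigma_{i_{0}})=(1,1)$, and transports $(a_{j},\sigma_{j})$ along any sample path of $(M_{n})$. The hardest part will be to verify that this assignment is path-independent: on every closed loop through $i_{0}$ the cocycle $\prod A_{k}/\prod(a_{M_{k}}\sigma_{M_{k}}/a_{M_{k-1}}\sigma_{M_{k-1}})$ must reduce to $\pm 1$ consistent with a fixed sign convention. Solidarity of (T2'), irreducibility of $(M_{n})$, and the fact that along any excursion $A_{1}^{i}\in\{\pm 1\}$ force this, and in (C2) the additional constraint $\Prob_{i}(A_{1}^{i}=-1)>0$ forces $X$ to be symmetric, whereas in (C3) the sign freedom $\sigma_{i}$ absorbs any indeterminacy so that $X$ may be arbitrary. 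This cocycle-triviality argument is the technically most delicate step; everything else is a translation of the iid classification into the Markov-modulated setting.
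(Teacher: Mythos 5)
Your proposal is correct and follows essentially the same route as the paper: realize $P$ through a random variable $R$, iterate the Markovian SFPE \eqref{Markovian SFPE} up to the return time $\tau(i)$ to obtain the embedded iid equation \eqref{SFPE2}, classify each marginal $P(i,\cdot)$ by Theorem \ref{thm:fixed points IID}, and split cases via the solidarity in Proposition \ref{prop:trichotomy embedded RW}. The one genuine divergence is in the gluing step for (C2)/(C3), i.e.\ in showing that all centered marginals are affine images of a \emph{single} $X$: the paper exploits the a.s.\ identity $X^{i}=\Pi_{n}X^{M_{n}}$ together with the already-established facts $|\Pi_{n}|=a_{M_{0}}/a_{M_{n}}$ (null-homology of $\log|\Pi_{n}|$) and $\sign(\Pi_{n})=\sigma_{M_{n}}/\sigma_{M_{0}}$ (Lemma \ref{lem:sign Pi_n}), and then lets $n\to\infty$ via the ergodic theorem; you instead propose to re-derive the cocycle triviality of $\Pi_{n}$ from scratch and to transport $(a_{j},\sigma_{j})$ along a single path from a reference state, concluding $X^{i}/a_{i}\eqdist X^{j}/a_{j}$ by irreducibility. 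Both work; your path-transport is slightly more elementary (no ergodic theorem needed, only irreducibility), but it duplicates work the paper has already packaged in Section \ref{sec:degeneracy}. Two small points to tidy up: in (C2) the sign $\sign(A_{1})$ need \emph{not} be deterministic given $(M_{0},M_{1})$ (only $|A_{1}|$ is), so your ``cocycle equals $\pm1$'' must be read as a random sign that is harmless only because the marginals are already symmetric by Theorem \ref{thm:fixed points IID}(b.1) — the symmetry is an input to, not an output of, the transport argument; and, like the theorem's literal statement, you omit the converse verification that every kernel of the stated form is indeed a fixed point, which the paper carries out but which is not strictly required for the statement as given.
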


\subsection{Convergence of forward iterations}\label{subsec:convergence forward}

Equation \eqref{eq:forward<->backward dual} suggests to study convergence of the forward iteration $\Psi_{n:1}(Z_{0})$ by looking at the backward dual ${}^{\#}\Psi_{1:n}(Z_{0})$ for any admissible $Z_{0}$ and using the results from Subsection \ref{subsec:3.1}. Unfortunately, this does only work in situations where the latter sequence converges a.s. In general, however, the forward iteration requires its own analysis because
\begin{itemize}
\item $Z_{0}$, if nonconstant, does no longer need to be admissible for the backward dual, and
\item \eqref{eq:forward<->backward dual} generally fails to hold under $\Prob_{i}$ for $i\in\cS$. 
\end{itemize}
Needless to say that, unless $\Psi_{1}(c)=c$ a.s. for some $c\in\R$,
only convergence in distribution occurs because $(M_{n},\Psi_{n:1}(Z_{0}))_{n\ge 0}$ forms a Markov chain.

\vspace{.1cm}
Regarding the degeneracy condition \eqref{pi-degenerate} for the dual $({}^{\#}M_{n},{}^{\#}A_{n+1},{}^{\#}B_{n+1})_{n\ge 0}$, i.e.
\begin{equation*}
\Prob_{\pi}({}^{\#}A_{1}c_{{}^{\#}\!M_{1}}+{}^{\#}B_{1}=c_{{}^{\#}\!M_{0}})\,=\,1\quad\text{for suitable }c_{i}\in\R,\,i\in\cS,
\end{equation*}
it can be restated for the original sequence when using $({}^{\#}M_{0},{}^{\#}M_{1},{}^{\#}A_{1},{}^{\#}B_{1})\eqdist (M_{0},M_{1},A_{1},B_{1})$ under $\Prob_{\pi}$, namely
\begin{equation}\label{dual pi-degenerate}
\Prob_{\pi}(A_{1}c_{M_{0}}+B_{1}=c_{M_{1}})\,=\,1\quad\text{for suitable }c_{i}\in\R,\,i\in\cS.
\end{equation}
By iteration, the latter condition further implies
\begin{equation}\label{dual pi-degenerate iterate}
\Psi_{n:1}(Z_{0})\ =\ c_{M_{n}}\,+\,\Pi_{n}(Z_{0}-c_{M_{0}})\quad\text{a.s.}
\end{equation}

Finally, note that if $({}^{\#}A_{1}^{i},{}^{\#}B_{1}^{i})$ denotes the dual counterpart of $(A_{1}^{i},B_{1}^{i})$, then its law under $\Prob_{i}$ equals
$$ \Prob_{\pi}\left((A_{1}^{i},B_{1}^{i})\in\cdot|M_{\tau(i)}=i\right). $$
The following result is the forward counterpart of Theorem \ref{thm:weak convergence}.

\begin{Theorem}\label{thm:convergence forward}
Suppose \eqref{standing assumption}. Given $i\in\cS$ and an admissible $Z_{0}$, $\Prob_{i}(\Psi_{n:1}(Z_{0})\in\cdot)$ converges weakly to some $Q_{i}$ iff one of the following conditions is fulfilled:
\begin{description}[(b)]\itemsep3pt
\item[(a)] $\lim_{n\to\infty}\Pi_{\tau_{n}(i)}=0$ a.s. and $\Erw_{i} J_{i}(\log^{+}|{}^{\#}B_{1}^{i}|)<\infty$. In this case, $Q_{i}$ does not depend on $i$ and equals $\Prob_{\pi}({}^{\#}Z_{\infty}\in\cdot)$.
\item[(b)] $\Prob_{i}(|A_{1}^{i}|=1)=1$, \eqref{dual pi-degenerate} and one of the following two conditions hold:
\begin{description}[(b.2)]\itemsep2pt
\item[(b.1)] $\Prob_{i}(\wh{\tau}(i)\in\cdot)$ is aperiodic.
\item[(b.2)] $\Prob_{i}(\wh{\tau}(i)\in\cdot)$ is 2-periodic and 
\begin{equation}\label{eq:symmetry condition forward}
\lim_{n\to\infty}\Prob_{i}(c_{M_{2n}}+\Pi_{2n}(Z_{0}-c_{i})\in\cdot)\ =\ \lim_{n\to\infty}\Prob_{i}(c_{M_{2n}}-\Pi_{2n}(Z_{0}-c_{i})\in\cdot).
\end{equation}
\end{description}
In both cases, $Q_{i}$ may vary with $i$.
\item[(c)] $\limsup_{n\to\infty}|\Pi_{\tau_{n}(i)}|=\infty$ a.s., \eqref{pi-degenerate} and one of the following two conditions hold:
\begin{description}[(c.2)]\itemsep2pt
\item[(c.1)] $\Pi_{n}\xrightarrow{\Prob_{i}} 0$.
\item[(c.2)] $\limsup_{n\to\infty}\Prob_{i}(|\Pi_{n}|>a)>0$ for some $a>0$ and $Z_{0}=c_{i}$.
\end{description}
In both cases, $Q_{i}$ does not depend on $i$ and equals $\Prob_{\pi}(c_{M_{0}}\in\cdot)$.
\end{description}
\end{Theorem}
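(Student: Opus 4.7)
The strategy combines a cycle decomposition at the return times $\tau_n(i)$ with the iid-case results of Theorems \ref{thm:G/M theorem} and \ref{thm:fixed points IID}, together with the duality identity \eqref{eq:forward<->backward dual} for the analysis under $\Prob_\pi$. Under $\Prob_\pi$, \eqref{eq:forward<->backward dual} reduces distributional convergence of the forward iteration $\Psi_{n:1}(Z_0)$ to that of the backward dual iteration ${}^{\#}\Psi_{1:n}(Z_0)$, to which Theorem \ref{thm:weak convergence} applies in the dual system $({}^{\#}M_n, {}^{\#}A_{n+1}, {}^{\#}B_{n+1})$. Its conditions transcribe correctly into (a)--(c) here, since cycles from $i$ to $i$ in the dual chain are the time-reversals of those in the original chain and the absolute cycle-product of the $A$-factors is order-insensitive (yielding agreement of the fluctuation alternatives), \eqref{pi-degenerate} for the dual is precisely \eqref{dual pi-degenerate} for the original, and the dual $B$-accumulation is ${}^{\#}B_1^i$ by definition. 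This gives the $\Prob_\pi$-limits: $\Prob_\pi({}^{\#}Z_\infty \in \cdot)$ in (a) and $\Prob_\pi(c_{M_0} \in \cdot)$ in (c).

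Because duality fails under $\Prob_i$, the $\Prob_i$-analysis proceeds via the subsampled process $U_n := \Psi_{\tau_n(i):1}(Z_0)$. By the strong Markov property at each $\tau_n(i)$, we have $U_n = \wh A_n U_{n-1} + \wh B_n$ under $\Prob_i$ with iid pairs $(\wh A_n, \wh B_n)_{n\ge 1}$, and a cycle-reversal computation identifies $(\wh A_1, \wh B_1) \eqdist ({}^{\#}A_1^i, {}^{\#}B_1^i)$ under $\Prob_i$. The iid results then classify $\Prob_i$-weak convergence of $U_n$ into three alternatives matching (a)--(c) of this theorem: the trichotomy for $\wh A_1 \wh A_2 \cdots$ coincides with that for $\Pi_{\tau_n(i)}$; the Goldie--Maller functional $J$ of \cite{GolMal:00} equals $J_i$ because $-\log|\wh A_1| = S_{\tau(i)}$ under $\Prob_i$; and the iid degeneracy $\wh B_1 = c(1 - \wh A_1)$ a.s. is equivalent to \eqref{dual pi-degenerate} by Lemma \ref{lem:solidarity} and the discussion in Subsection \ref{subsec:degeneracy}.

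The key technical step is interpolation between successive return times. In cases (b) and (c), the iterated degeneracy \eqref{dual pi-degenerate iterate} collapses $\Psi_{n:1}(Z_0)$ to $c_{M_n} + \Pi_n(Z_0 - c_i)$ under $\Prob_i$, reducing the question to the joint $\Prob_i$-distributional behaviour of $(M_n, \Pi_n)$: (c.1) exploits $\Pi_n \xrightarrow{\Prob_i} 0$ and ergodicity of $(M_n)$ to give $\Prob_\pi(c_{M_0} \in \cdot)$; (c.2) requires $Z_0 = c_i$ to suppress the non-vanishing $\Pi_n$-term; (b.1) uses the aperiodicity of $\wh\tau(i)$ and the Markov renewal theorem; (b.2) forces \eqref{eq:symmetry condition forward} to reconcile the two subsequential limits imposed by the $2$-periodic lattice. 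In case (a), one writes $\Psi_{n:1}(Z_0) = \Psi_{n:\tau_{k-1}(i)+1}(U_{k-1})$ for $\tau_{k-1}(i) < n \le \tau_k(i)$ and combines convergence of $U_{k-1}$ with a tightness estimate on the excursion, ultimately identifying the joint chain $(M_n, \Psi_{n:1}(Z_0))$ as positive Harris recurrent with a unique invariant kernel fixed by the $\Prob_\pi$-analysis, so that $Q_i$ is $i$-independent and equals $\Prob_\pi({}^{\#}Z_\infty \in \cdot)$.

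The hardest parts are expected to be the Harris-recurrence verification in case (a), which is delicate because $\Pi_n$ itself need not vanish even when $\Pi_{\tau_n(i)} \to 0$ (cf.\ Remark \ref{rem1:a.s. convergence}), and the $2$-periodic case (b.2), where \eqref{eq:symmetry condition forward} must be shown to be both necessary and sufficient for merging the two subsequential limits induced by the sign fluctuations of $\Pi_n$ along cycles. For necessity when none of (a)--(c) holds, one contraposes via the analogous negative assertion of Theorem \ref{thm:weak convergence} applied to the dual, together with Lemma \ref{lem:concentration MRW}, forcing $|\Psi_{n:1}(Z_0)| \xrightarrow{\Prob_\pi} \infty$ and thereby excluding weak convergence under any $\Prob_i$.
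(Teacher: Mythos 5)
Your treatment of cases (b) and (c) and of the necessity direction matches the paper's proof in all essentials: the iterated degeneracy identity \eqref{dual pi-degenerate iterate} collapses $\Psi_{n:1}(Z_{0})$ to $c_{M_{n}}+\Pi_{n}(Z_{0}-c_{i})$, the structure results $|\Pi_{n}|=a_{M_{0}}/a_{M_{n}}$ and $\sign(\Pi_{n})=\wh\sigma_{\wh M_{n}}/\wh\sigma_{\wh M_{0}}$ plus the ergodic theorem for $(\wh M_{n})$ give (b.1)/(b.2), the tightness contradiction via Lemma \ref{lem:concentration MRW} disposes of (c.2) when $Z_{0}\ne c_{i}$, and divergence when the degeneracy fails is obtained from the dual backward iteration exactly as in Lemma \ref{lem:Case (b) forward iteration}.

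The gap is in case (a) under $\Prob_{i}$. First, your subsampled process $U_{k}=\Psi_{\tau_{k}(i):1}(Z_{0})$ converges in distribution (by the iid theory) to the law of the dual perpetuity \emph{conditioned on the terminal state being $i$}, i.e.\ to $\Prob_{i}({}^{\#}Z_{\infty}\in\cdot)$, whereas the asserted limit of $\Psi_{n:1}(Z_{0})$ is the $\pi$-mixture $\Prob_{\pi}({}^{\#}Z_{\infty}\in\cdot)$. Hence the excursion $\Psi_{n:\tau_{k-1}(i)+1}$ between the last return to $i$ and time $n$ is \emph{not} asymptotically negligible --- it is precisely what converts the conditional law into the $\pi$-mixture --- so ``a tightness estimate on the excursion'' cannot close the argument; you would need the joint distributional limit of the overshooting cycle and $U_{k-1}$, which is essentially a Markov renewal theorem you have not supplied. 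Second, your fallback to positive Harris recurrence of $(M_{n},\Psi_{n:1}(Z_{0}))$ is not available in general: the forward chain forgets its initial condition through contraction, not through any minorization (take $B_{n}$ deterministic given the environment and the chain lives on a countable set, with no total variation convergence from an arbitrary $Z_{0}$). The paper's mechanism is different and should replace this step: since $\Pi_{\tau_{n}(i)}\to0$ a.s.\ implies $\Pi_{n}\xrightarrow{\Prob_{\pi}}0$ (see \eqref{eq:Pi_n to 0} --- note that your worry about $\Pi_{n}$ not vanishing concerns a.s.\ convergence only), one first reduces to $Z_{0}=0$, obtains the $\Prob_{\pi}$-limit from the a.s.\ convergence of the backward dual via \eqref{eq:forward<->backward dual}, and then transfers to $\Prob_{i}$ by coupling the driving chain started at $i$ with a stationary copy at the coupling time $T$ and using $|\Psi_{n:1}(Z_{0})-\widetilde\Psi_{n:1}(Z_{0})|=|\Pi_{n}/\Pi_{T}|\,|\Psi_{T:1}(Z_{0})-\Psi'_{T:1}(Z_{0})|\xrightarrow{\Prob_{\pi}}0$. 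Without this (or an equivalent) argument, the identification of $Q_{i}$ as $\Prob_{\pi}({}^{\#}Z_{\infty}\in\cdot)$, and in particular its independence of $i$, is not established.
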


The form of $Q_{i}$ under Condition (b) will be provided by \eqref{eq1:def Q_i under (b.1)}--\eqref{eq:def Q_i under (b.2)} in the proof of Lemma \ref{lem:Case (b) forward iteration}.

\subsection{If condition \eqref{standing assumption} fails}

If $\Prob_{\pi}(B=0)=1$, then $\Psi_{n:1}(Z_{0})=\Psi_{1:n}(Z_{0})=\Pi_{n}Z_{0}$ for $n\ge 0$.
Therefore these iterations converge a.s. (to 0) for any admissible $Z_{0}$ iff $\Pi_{n}\to 0$ a.s. (Case (T1') of Proposition \ref{prop:trichotomy embedded RW}), and they converge in distribution under any $\Prob_{i}$ iff $\Pi_{n}\to 0$ a.s. or $\Prob_{i}(|\Pi_{\tau(i)}|=1)=1$ for some/all $i\in\cS$. Under the last assumption (Case (T2') of Proposition \ref{prop:trichotomy embedded RW}), $(\Psi_{n:1}(Z_{0}))_{n\ge 0}$ and $(\Psi_{1:n}(Z_{0}))_{n\ge 0}$ are both regenerative processes under each $\Prob_{i}(\cdot|Z_{0}=z)$, $z\in\R$, with first regeneration epoch $\wh{\tau}(i)$ and weak limits (see Asmussen \cite[Thm. VI.1.2 on p.~170]{Asmussen:03})
\begin{equation}\label{weak limits of iterations}
\frac{1}{\Erw_{i}\wh{\tau}(i)}\,\Erw_{i}\left(\sum_{n=0}^{\wh{\tau}(i)-1}\1_{\{\Psi_{n:1}(z)\in\cdot\}}\right)\quad\text{and}\quad\frac{1}{\Erw_{i}\wh{\tau}(i)}\,\Erw_{i}\left(\sum_{n=0}^{\wh{\tau}(i)-1}\1_{\{\Psi_{1:n}(z)\in\cdot\}}\right),
\end{equation}
respectively, provided that $\Prob_{i}(\wh{\tau}(i)\in\cdot)$ is aperiodic. The limits are generally different and also dependent of $i$. Replacing $z$ with $Z_{0}$, the weak limits of $\Psi_{n:1}(Z_{0})$ and $\Psi_{1:n}(Z_{0})$ for any admissible $Z_{0}$ are obtained. If $\Prob_{i}(\wh{\tau}(i)\in\cdot)$ is 2-periodic, then similar arguments as will be given for the case (b.2) in Theorems \ref{thm:weak convergence} and \ref{thm:convergence forward} show that the weak convergence to the above limits remains valid iff the respective restrictions on $Z_{0}$ stated there hold. We omit further details.

\vspace{.1cm}
If $\Prob_{\pi}(A=0)>0$, let $T:=\inf\{n:A_{n}=0\}$. In this case, the backward iteration $\Psi_{1:n}(Z_{0})$ for arbitrary $Z_{0}$ (admissible or not) equals $\sum_{k=1}^{T}\Pi_{k-1}B_{k}$ a.s. for all $n\ge T$ and is thus a.s. convergent. Regarding the forward iterations $\Psi_{n:1}(Z_{0})$ for admissible $Z_{0}$,
the a.s. convergence of the backward dual ${}^{\#}\Psi_{1:n}(Z_{0})$ in combination with  \eqref{eq:forward<->backward dual} provides us with the weak convergence of $\Prob_{\pi}(\Psi_{n:1}(Z_{0})\in\cdot)$ to $Q_{\pi}:=\Prob_{\pi}\big(\sum_{k=1}^{{}^{\#}T}{}^{\#}\Pi_{k-1}{}^{\#}B_{k}\in\cdot\big)$, where ${}^{\#}T:=\inf\{n:{}^{\#}A_{n}=0\}$. Then, by using a coupling argument, one can also derive that $\Prob_{i}(\Psi_{n:1}(Z_{0})\in\cdot)$ converges to the same law for any $i\in\cS$. Finally, note that $(\Psi_{n:1}(Z_{0}))_{n\ge 0}$ for admissible $Z_{0}$ forms a regenerative process under $\Prob_{\pi}$ with first regeneration epoch $T(i):=\inf\{n:M_{n}=i,A_{n}=0\}$ for any $i$ such that $\Prob_{\pi}(M_{1}=i,A_{1}=0)>0$. As a consequence, $Q_{\pi}$ may alterantively be given as
$$ Q_{\pi}\ =\ \frac{1}{\Erw_{i}T(i)}\Erw_{i}\left(\sum_{n=1}^{T(i)}\1_{\{\Psi_{n:1}(0)\in\cdot)\}}\right). $$

\section{The degeneracy condition \eqref{pi-degenerate}}\label{sec:degeneracy}

In the iid-case, degeneracy occurs when $\Prob(Ac+B=c)=1$ for some $c\in\R$ and takes a side note only to deal with. This is quite different in the presence of a Markovian environment, and this subsection therefore collects some relevant facts about the degeneracy condition \eqref{pi-degenerate} and particularly shows its equivalence with \eqref{degenerate}. To avoid trivialities, we assume $|\cS|>1$ throughout.

\vspace{.1cm}
The first thing to point out about \eqref{degenerate} is the following solidarity lemma.

\begin{Lemma}\label{lem:solidarity}
If $\Prob_{i}(A_{1}^{i}c_{i}+B_{1}^{i}=c_{i})=1$ for some one pair $(i,c_{i})\in\cS\times\R$, then \eqref{degenerate} holds.
\end{Lemma}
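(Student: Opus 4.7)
The plan is to propagate the fixed-point relation $\Psi_{1}^{i}(c_{i}) = c_{i}$ to every other state by splitting an $i$-excursion at its first visit to $j$ and exploiting the independence granted by the strong Markov property, together with the elementary observation that an affine map $x \mapsto \alpha x + \beta$ with $\alpha\ne 0$ has a unique preimage of any prescribed value.

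First I would fix $j \ne i$ and work under $\Prob_{i}$ on the event $E := \{\sigma_{j} < \tau(i)\}$, which has positive probability by irreducibility. Setting $U := \Psi_{1:\sigma_{j}}$ and $V := \Psi_{\sigma_{j}+1:\tau(i)}$, so that $\Psi_{1:\tau(i)} = U\circ V$, the strong Markov property at $\sigma_{j}$ makes $U$ and $V$ conditionally independent on $E$ and identifies the law of $V$ given $E$ with that of the direct first-passage map from $j$ to $i$ under $\Prob_{j}$. Writing $U(x) = \alpha x + \beta$ with $\alpha = \Pi_{\sigma_{j}} \ne 0$ a.s.\ by \eqref{standing assumption}, the identity $U(V(c_{i})) = c_{i}$ rearranges to $V(c_{i}) = (c_{i} - \beta)/\alpha$; the left-hand side is independent of $(\alpha,\beta)$ and the right-hand side is measurable with respect to it, so both must be a.s.\ constant, and I define $c_{j}$ to be that constant. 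Thus $V(c_{i}) = c_{j}$ and $U(c_{j}) = c_{i}$ a.s.\ on $E$, which by strong Markov translates to: under $\Prob_{j}$ the direct first-passage $j\to i$ sends $c_{i}$ to $c_{j}$ a.s., and under $\Prob_{i}$ the direct first-passage $i\to j$ sends $c_{j}$ to $c_{i}$ a.s.

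Next I would remove the directness condition in the $i\to j$ direction. The unconditional first-passage $\Psi_{1:\sigma_{j}}$ under $\Prob_{i}$ factors as some number of complete $i$-excursions each avoiding $j$, followed by one direct $i\to j$ passage; each $i$-excursion factor fixes $c_{i}$ by hypothesis (the a.s.\ identity $\Psi_{m}^{i}(c_{i})=c_{i}$ is unaffected by conditioning on non-visits to $j$) and the final piece sends $c_{j}\to c_{i}$ by the previous step, so $\Psi_{1:\sigma_{j}}(c_{j}) = c_{i}$ a.s.\ under $\Prob_{i}$. Now split $\Psi_{1}^{j}$ under $\Prob_{j}$ according to whether the excursion visits $i$ (event $F$) or not. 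On $F$, writing $\rho$ for the first hit of $i$, the decomposition $\Psi_{1}^{j} = \Psi_{1:\rho}\circ\Psi_{\rho+1:\tau(j)}$ together with strong Markov gives $\Psi_{1:\rho}(c_{i}) = c_{j}$ and $\Psi_{\rho+1:\tau(j)}(c_{j}) = c_{i}$ a.s.\ (the second factor being an unconditional first-passage $i\to j$), whence $\Psi_{1}^{j}(c_{j}) = c_{j}$ a.s.\ on $F$.

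The remaining case $F^{c}$ is, I expect, the main obstacle: I have to show that a $j$-excursion that never visits the anchor state $i$ still fixes $c_{j}$. To do this I would return to $\Prob_{i}$ on the event $E \cap \{L \ge 2\}$, where $L$ counts the visits to $j$ during the first $i$-excursion; this event has positive probability by irreducibility. Decomposing the $i$-excursion as ``direct $(i\to j)$ --- then $L-1$ middle excursions from $j$ to $j$ avoiding $i$ --- then direct $(j\to i)$'' and unfolding the identity $\Psi_{1}^{i}(c_{i}) = c_{i}$ layer by layer, the same independence/nonzero-slope argument applied recursively forces every middle piece to send $c_{j}$ to $c_{j}$ a.s. Since by strong Markov the conditional law of such a middle piece coincides with that of $\Psi_{1}^{j}$ under $\Prob_{j}(\,\cdot\,|\,F^{c})$, this yields $\Psi_{1}^{j}(c_{j}) = c_{j}$ a.s.\ on $F^{c}$. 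Combining with the $F$ case gives the desired a.s.\ identity under $\Prob_{j}$ and, since $j$ was arbitrary, establishes \eqref{degenerate}.
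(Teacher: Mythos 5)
Your argument is correct in substance but takes a genuinely different route from the paper's. The paper expands the identity $\sum_{k\le\tau_{n}(i)}\Pi_{k-1}B_{k}=c_{i}(1-\Pi_{\tau_{n}(i)})$ along three consecutive $j$-excursions nested inside an $i$-block, isolates $B_{2}^{j}+A_{2}^{j}B_{3}^{j}$ as a deterministic function of $A_{2}^{j}A_{3}^{j}$, and then invokes Grincevi\v{c}ius's characterization (Lemma \ref{lem:Grincevicius}); this leaves the degenerate branch $(A^{j},B^{j})=(1,c_{j})$, which must then be eliminated via Lemmas \ref{lem:solidarity A_1^s=1} and \ref{lem:A_1=1,B_1=c}. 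You instead split excursions at first-passage times and use only two elementary facts: a random variable that is both independent of and measurable with respect to a $\sigma$-field is a.s.\ constant, and an affine map with nonzero slope is injective. This constructs $c_{j}$ directly as the a.s.\ value of the first-passage map $j\to i$ applied to $c_{i}$, and it bypasses Grincevi\v{c}ius's lemma and the ensuing dichotomy entirely, which is arguably more transparent. Two small repairs are needed. First, $\Prob_{i}(E\cap\{L\ge 2\})>0$ does not follow from irreducibility alone; it is positive precisely when $\Prob_{j}(F^{c})>0$, but if $\Prob_{j}(F^{c})=0$ the case you are treating is vacuous, so the argument closes either way. Second, for $L\ge 3$ the recursive peeling only produces a cycle of constants $c_{j}\to y_{L-2}\to\cdots\to y_{1}\to c_{j}$ under the generic conditioned excursion, not directly that each middle piece fixes $c_{j}$; but since all middle pieces share the conditional law of $\Psi_{1}^{j}$ given $F^{c}$, the single-middle-piece case $L=2$ (which has positive probability whenever $F^{c}$ does) already yields the conclusion, so you should restrict to that event.
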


The proof is furnished by the following three results, the first of which was shown by Grincevi\v{c}ius \cite[Prop. 1]{Grincev:82}.

\begin{Lemma}\label{lem:Grincevicius}
Given a bivariate sequence $(A_{n},B_{n})_{n\ge 1}$ of iid real-valued random variables, the following assertions are equivalent:
\begin{description}[(b)]\itemsep2pt
\item[(a)] $A_{1}B_{2}+B_{1}=g(A_{1}A_{2})$ a.s. for some measurable function $g$.
\item[(b)] For some $c\in\R$, either $A_{1}c+B_{1}=c$ a.s. or $(A_{1},B_{1})=(1,c)$ a.s.
\end{description}
\end{Lemma}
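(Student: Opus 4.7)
The direction (b)$\RA$(a) is by direct computation: under $A_{1}c+B_{1}=c$ a.s., so that $B_{n}=c(1-A_{n})$ a.s.\ for each $n$, one gets
\[
A_{1}B_{2}+B_{1}\ =\ cA_{1}(1-A_{2})+c(1-A_{1})\ =\ c(1-A_{1}A_{2})\quad\text{a.s.},
\]
so $g(x):=c(1-x)$ works; under the second alternative $(A_{n},B_{n})=(1,c)$ a.s., both $A_{1}B_{2}+B_{1}=2c$ and $A_{1}A_{2}=1$ a.s., so one takes $g\equiv 2c$.

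For the converse (a)$\RA$(b), the crucial observation is the symmetry $A_{1}A_{2}=A_{2}A_{1}$. Since $(A_{2},B_{2},A_{1},B_{1})\eqdist(A_{1},B_{1},A_{2},B_{2})$ by the iid assumption, the hypothesis yields simultaneously
\[
A_{1}B_{2}+B_{1}\ =\ g(A_{1}A_{2})\ =\ A_{2}B_{1}+B_{2}\quad\text{a.s.},
\]
which I rearrange to the key pointwise identity
\[
B_{1}(1-A_{2})\ =\ B_{2}(1-A_{1})\quad\text{a.s.}
\]
This is the heart of the argument; after it, the proof reduces to a Fubini-based case analysis.

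Writing $\mu$ for the common law of $(A_{n},B_{n})$ and applying Fubini to the last display, I obtain that for $\mu$-a.e.\ $(a,b)$ in the support of $\mu$, one has $b(1-A_{2})=B_{2}(1-a)$ a.s. Two sub-cases complete the proof. If some such $(a,b)$ satisfies $a\neq 1$, then setting $c:=b/(1-a)$ gives $B_{2}=c(1-A_{2})$ a.s., equivalently $Ac+B=c$ a.s.; any further support point with first coordinate distinct from $1$ must yield the same $c$, since otherwise $A_{2}$ would be $\Prob$-a.s.\ constant and the support would collapse to a single point anyway. If on the other hand every support point satisfies $a=1$, then $A=1$ a.s., and the original identity degenerates to $B_{1}+B_{2}=g(1)$ a.s.\ with $B_{1},B_{2}$ iid, which forces $B_{1}$ to be a.s.\ constant, say equal to $c$; thus $(A,B)=(1,c)$ a.s.

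The only mildly delicate step is the Fubini unwinding of the joint identity into the pointwise one, which hinges on the independence of $(A_{1},B_{1})$ from $(A_{2},B_{2})$. The anticipated main obstacle—showing that a \emph{single} constant $c$ suffices globally rather than pointwise on the support of $\mu$—is absorbed by the consistency remark above, after which the dichotomy between the two alternatives of (b) falls out directly from whether or not $A=1$ a.s.
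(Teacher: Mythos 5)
Your proof is correct. Note that the paper itself does not prove this lemma: it is quoted from Grincevi\v{c}ius \cite{Grincev:82} and used as a black box, so there is no in-paper argument to compare against. Your route is a clean, self-contained one: the exchangeability step, namely that $(A_{2},B_{2},A_{1},B_{1})\eqdist(A_{1},B_{1},A_{2},B_{2})$ turns the hypothesis into the two simultaneous a.s.\ identities $A_{1}B_{2}+B_{1}=g(A_{1}A_{2})=A_{2}B_{1}+B_{2}$, whence $B_{1}(1-A_{2})=B_{2}(1-A_{1})$ a.s., is exactly the right key observation, and the Fubini disintegration plus the dichotomy on whether $\Prob(A\neq 1)>0$ correctly delivers the two alternatives of (b). Two small remarks: the ``consistency'' aside in Case 1 is superfluous (a single $\mu$-generic point $(a,b)$ with $a\neq1$ already yields $B=c(1-A)$ a.s.\ with $c=b/(1-a)$, and any other generic point then automatically gives the same $c$ because it must itself satisfy $b'=c(1-a')$); and in Case 2 you should make explicit the standard fact you are invoking, that a random variable which is both independent of $B_{2}$ and a.s.\ equal to a measurable function of $B_{2}$ is a.s.\ constant. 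Neither point is a gap.
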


\begin{Lemma}\label{lem:A_1=1,B_1=c}
If there are constants $b_{i}\in\R$ such that
$$ \Prob_{i}((A_{1}^{i},B_{1}^{i})=(1,b_{i}))\ =\ 1\quad\text{for all }i\in\cS, $$
then $b_{i}=0$ and thus \eqref{degenerate} trivially holds with $c_{i}=0$ for all $i\in\cS$.
\end{Lemma}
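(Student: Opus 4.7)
The plan is to reduce the hypothesis to a deterministic structure on $(A_n,B_n)$ in terms of $(M_n)$, and then to derive $b_i=0$ via a cycle-insertion argument. Because $A_1^i=\Pi_{\tau(i)}=1$ a.s.\ under $\Prob_i$ for every $i\in\cS$, the MRW $(M_n,S_n)=(M_n,-\log|\Pi_n|)$ falls in case~(T2) of Proposition~\ref{prop:trichotomy MRW}; null-homology then gives $|A_n|=\exp(g(M_{n-1})-g(M_n))$ a.s.\ for some $g:\cS\to\R$. The sign structure is a coboundary as well (since $\Pi_{\tau(i)}=+1$), so one may absorb it to write $A_n=a(M_{n-1},M_n)$ a.s.\ for a deterministic function $a$, giving $\Pi_n=e^{g(M_0)-g(M_n)}$. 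Conditioning next on any positive-probability $i$-excursion path, the $B_k$ are independent with laws depending only on $(M_{k-1},M_k)$, so the deterministic identity $\sum_{k=1}^n\Pi_{k-1}B_k=b_i$ has vanishing conditional variance and forces each $B_k$ to be constant given its transition. Hence $B_n=b(M_{n-1},M_n)$ a.s.\ for constants $b_{ij}$.

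Setting $h(i,j):=e^{-g(i)}b_{ij}$ and $\beta_i:=e^{-g(i)}b_i$ transforms the constraint into $\sum_{k=1}^n h(v_{k-1},v_k)=\beta_i$ for every positive-probability $i$-excursion path $(i,v_1,\ldots,v_{n-1},i)$. For $i\ne j$, irreducibility provides simple paths $P_{ij}$ from $i$ to $j$ and $P_{ji}$ from $j$ to $i$, each avoiding $\{i,j\}$ in between; comparing alternative $P_{ij}$'s joined to a fixed $P_{ji}$ via the $i$-cycle constraint shows that the $h$-sums $\phi(i,j)$ and $\phi(j,i)$ depend only on the endpoints. The $i$-excursion $P_{ij}+P_{ji}$ and the $j$-excursion $P_{ji}+P_{ij}$ both have $h$-sum $\phi(i,j)+\phi(j,i)$, yielding $\beta_i=\beta_j=:\beta$.

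To conclude $\beta=0$, I would find a pair $i\ne j$ and a $j$-cycle $C$ avoiding $i$: the two $i$-excursions $P_{ij}+P_{ji}$ and $P_{ij}+C+P_{ji}$ then have respective $h$-sums $\beta$ and $\phi(i,j)+\beta_j+\phi(j,i)=2\beta$, forcing $\beta=0$ and hence $b_i=e^{g(i)}\beta_i=0$ for all $i\in\cS$. Such a $C$ is immediate from any self-loop ($p_{jj}>0$); otherwise it comes from a simple directed cycle in the transition graph missing some state, whose existence under aperiodicity follows from the observation that if every simple directed cycle were Hamiltonian in $\cS$, the loop-length gcd at each state would divide $|\cS|>1$, contradicting aperiodicity---and for countable $\cS$ any finite simple cycle misses some state automatically. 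Verifying this combinatorial existence in full generality (in particular for countable $\cS$ with no self-loops) is the main obstacle of the proof.
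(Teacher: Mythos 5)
Your proof is correct, and it takes a genuinely different route from the paper's. The paper argues distributionally and locally: for a given $i$ it picks a $j$-excursion passing through $i$, inserts at that visit an $i$-excursion chosen to avoid $j$, and compares the conditional laws of the two partial sums on the events $E_1,E_2$ via the Markov-modulation property, obtaining $b_j=\Pi_{n_0}b_i+b_j$ directly --- with no need to first show that $(A_n,B_n)$ is a deterministic function of $(M_{n-1},M_n)$. You instead front-load a structural reduction (null-homology of $\log|\Pi_n|$, the sign coboundary as in Lemma \ref{lem:sign Pi_n}, and the zero-conditional-variance argument forcing $B_n=b(M_{n-1},M_n)$) and then work purely combinatorially with an additive cocycle on the transition graph; two cosmetic slips are that $h(u,v)$ should carry the sign factor $\sigma_u$, and the aperiodicity argument should read ``$|\cS|$ divides the loop-length gcd,'' not the reverse. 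What your route buys is exactly the robustness you worry about at the end: the configuration the paper uses --- some $j\ne i$ admitting an $i$-excursion that avoids $j$ --- need not exist for every $i$ (e.g.\ $\cS=\{1,2,3\}$ with $p_{12}=p_{23}=1$ and $p_{31},p_{33}>0$: every $1$-excursion visits both other states), so the direct insertion only yields $b_i=0$ for favourable $i$. Your two-stage argument --- first $\beta_i\equiv\beta$ by reading one two-point cycle as both an $i$- and a $j$-excursion, then $\beta=0$ from a single well-chosen insertion --- needs the insertion for only one pair, and your existence argument (a self-loop, or a non-Hamiltonian simple cycle via aperiodicity in the finite case, or trivially for infinite $\cS$) is in fact complete; the ``main obstacle'' you flag is already resolved by what you wrote.
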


\begin{proof}
Fixing any $i\in\cS$, there exist $j\in\cS\backslash\{i\}$ (recall $|\cS|>1$) and $n_{0},n_{1},n_{2}\in\N$ such that
\begin{align*}
\Prob_{i}(\tau(i)=n_{1})\ &\ge\ \Prob_{i}(\tau(i)=n_{1},\tau(j)>n_{1})\ >\ 0
\shortintertext{and}
\Prob_{j}(\tau(j)>n_{0},M_{n_{0}}=i)\ &\ge\ \Prob_{j}(\tau(j)=n_{0}+n_{2},M_{n_{0}}=i)\ >\ 0.
\end{align*}
It follows that $E_{1}:=\{M_{0}=j,\,\tau(j)=n_{0}+n_{2}\}$ and $E_{2}:=\{M_{0}=j,\,M_{n_{0}}=M_{n_{0}+n_{1}}=i,\tau(j)=n_{0}+n_{1}+n_{2}\}$ have positive $\Prob_{j}$-probability.
On $E_{1}$, the proviso provides us with
$$ b_{j}\ =\ B_{1}^{j}\ =\ \sum_{k=1}^{n_{0}+n_{2}}\Pi_{k-1}B_{k}, $$
while on $E_{2}=\{M_{0}=j,\,M_{n_{0}}=M_{n_{0}+n_{1}}=i,\tau(j)=n_{0}+n_{1}+n_{2}\}$, we find
\begin{align*}
b_{j}\ =\ B_{1}^{j}\ &=\ \sum_{k=1}^{n_{0}}\Pi_{k-1}B_{k}\ +\ \Pi_{n_{0}}\underbrace{\sum_{k=n_{0}+1}^{n_{0}+n_{1}}\left(\prod_{l=0}^{k-1}A_{n_{0}+l}\right)B_{k}}_{=b_{i}}\\
&+\ \underbrace{\left(\prod_{l=1}^{n_{1}}A_{n_{0}+l}\right)}_{=1}\Pi_{n_{0}}\sum_{k=1}^{n_{2}}\left(\prod_{l=1}^{k-1}A_{n_{0}+n_{1}+l}\right)B_{n_{0}+n_{1}+k}.
\end{align*}
But the fact that $(A_{n},B_{n})_{n\ge 1}$ is modulated by the chain $(M_{n})_{n\ge 0}$ further entails that
\begin{align*}
&\Prob\left(\sum_{k=1}^{n_{0}}\Pi_{k-1}B_{k}+\Pi_{n_{0}}\sum_{k=1}^{n_{2}}\left(\prod_{l=1}^{k-1}A_{n_{0}+n_{1}+l}\right)B_{n_{0}+n_{1}+k}=b_{j}\Bigg|E_{2}\right)\,=\,\Prob\left(B_{1}^{j}=b_{j}\Big|E_{1}\right)\,=\,1,
\end{align*}
whence we finally conclude $b_{j}=\Pi_{n_{0}}b_{i}+b_{j}$ and thus $b_{i}=0$.\qed
\end{proof}

\begin{Lemma}\label{lem:solidarity A_1^s=1}
If $\Prob_{i}(A_{1}^{i}=1)=1$ for some $i\in\cS$, then this is true for all $i\in\cS$.
\end{Lemma}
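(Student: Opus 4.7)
My plan is to split the statement into two parts: an absolute-value part and a sign part. Since $A_{1}^{i}=\Pi_{\tau(i)}$, the hypothesis says $\Pi_{\tau(i)}=1$ $\Prob_{i}$-a.s., and in particular $|\Pi_{\tau(i)}|=1$, placing us in case (T2$'$) of Proposition~\ref{prop:trichotomy embedded RW}. Its solidarity statement then gives $|\Pi_{\tau(j)}|=1$ $\Prob_{j}$-a.s. for every $j\in\cS$, so it remains only to show that $\sign(\Pi_{\tau(j)})=+1$ $\Prob_{j}$-a.s.\ for all $j$.

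For the sign, I would augment the chain and consider $\wh M_{n}:=(M_{n},\sign(\Pi_{n}))$, which is itself Markov on $\cS\times\{-1,+1\}$ since the law of $\sign(A_{n+1})$ depends only on $(M_{n},M_{n+1})$. Taking $\Pi_{0}=1$, the hypothesis reads $\wh M_{\tau_{k}(i)}=(i,+1)$ for every $k\ge 0$, $\Prob_{i}$-a.s. Let $R\subseteq\cS\times\{\pm1\}$ be the set of points that $\wh M$ reaches with positive probability from $(i,+1)$. This set is forward-invariant under $\wh M$, projects onto $\cS$ by irreducibility of $(M_{n})$, and satisfies $R\cap(\{i\}\times\{\pm1\})=\{(i,+1)\}$ by the hypothesis.

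The crux of the argument — and where I expect the main difficulty — will be to show that $R$ meets every fiber $\{k\}\times\{\pm1\}$ in exactly one point $(k,y_{k}^{\ast})$. Suppose toward a contradiction that both $(k,+1)$ and $(k,-1)$ belong to $R$ for some $k$, and set $\xi_{k}:=\sign(\Pi_{T_{i}})$ under $\Prob_{k}$, where $T_{i}:=\inf\{n\ge 1:M_{n}=i\}$. The key observation is that $\xi_{k}$ is a functional of $(M_{n},A_{n+1})_{n\ge 0}$ alone, with a law that does not involve any initial sign. Running $\wh M$ from $(k,+1)\in R$ keeps the trajectory inside $R$, and when it first hits the $i$-fiber (which happens a.s.\ by ergodicity), it must land at $(i,+1)$; this forces $\xi_{k}=+1$ a.s. Running it instead from $(k,-1)\in R$ forces, by the same reasoning, $\xi_{k}=-1$ a.s., contradicting the common law. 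Hence the singleton-per-fiber claim holds, with $y_{i}^{\ast}=+1$.

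Granted this, fix any $j\in\cS$ and work under $\Prob_{i}$. Setting $T_{j}:=\inf\{n\ge 1:M_{n}=j\}$ and $T_{j}':=\inf\{n>T_{j}:M_{n}=j\}$, we have $\sign(\Pi_{T_{j}})=\sign(\Pi_{T_{j}'})=y_{j}^{\ast}$ a.s., hence $\sign(\Pi_{T_{j}'}/\Pi_{T_{j}})=+1$ a.s. By the strong Markov property at $T_{j}$, the ratio $\Pi_{T_{j}'}/\Pi_{T_{j}}$ has the same law as $\Pi_{\tau(j)}$ under $\Prob_{j}$, and therefore $\sign(\Pi_{\tau(j)})=+1$ $\Prob_{j}$-a.s. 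Combined with $|\Pi_{\tau(j)}|=1$, this gives $A_{1}^{j}=\Pi_{\tau(j)}=1$ $\Prob_{j}$-a.s., as claimed.
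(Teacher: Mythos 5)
Your proposal is correct, but it takes a genuinely different route from the paper. The paper's proof is a single independence computation: under $\Prob_{i}$ it writes $1=\Pi_{\tau_{2}^{*}(i)}$ (with $\tau_{2}^{*}(i)$ the first visit to $i$ after the second visit to $j$) as a product of three independent factors — the piece up to the first visit to $j$, the piece over the ensuing $j$-to-$j$ excursion (which has the $\Prob_{j}$-law of $A_{1}^{j}$), and the piece from there to the next visit to $i$ — and observes that the outer two factors jointly have law $\delta_{1}$ (being distributed as $\Pi_{\tau_{1}^{*}(i)}$, another return to $i$); independence then forces the middle factor to equal $1$ a.s. This handles modulus and sign simultaneously in a few lines. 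You instead split off the modulus via the solidarity assertion of Proposition \ref{prop:trichotomy embedded RW} (type (T2$'$) is a class property) and then treat the sign by a reachability argument for the augmented chain $(M_{n},\sign(\Pi_{n}))$, showing the reachable set from $(i,+1)$ meets each fiber in exactly one point and transporting that back to $\Prob_{j}$ via the strong Markov property at two successive visits to $j$. All steps check out, including the sign flip when the augmented chain is started from $(k,-1)$ (it then lands at $(i,-\xi_{k})$, forcing $\xi_{k}=-1$). What your approach buys is structural insight: the single-point-per-fiber claim is essentially the construction of the signs $\sigma_{j}$ of Lemma \ref{lem:sign Pi_n}, so your argument anticipates that later lemma; the cost is that it is longer, leans on the imported solidarity result, and needs the (harmless, but worth a remark) fact that $A_{n}\neq 0$ a.s.\ so that the sign-augmented chain is genuinely $\{\pm 1\}$-valued.
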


\begin{proof}
If $A_{1}^{i}=\Pi_{\tau(i)}=1$ $\Prob_{i}$-a.s., then $\Pi_{\tau_{n}(i)}=\prod_{k=1}^{n}A_{k}^{i}=1$ $\Prob_{i}$-a.s. for all $n\ge 1$. Fix an arbitrary $j\ne i$ (if any), write $\tau_{n}$ for $\tau_{n}(j)$, and define
$$ \tau_{n}^{*}(i)\ :=\ \inf\{k>\tau_{n}:M_{k}=i\}\ =\ \min_{k\ge 1}(\tau_{k}(i)-\tau_{n})^{+}. $$
Then
\begin{align*}
1\ =\ \Pi_{\tau_{2}^{*}(i)}\ =\ A_{1}^{j}\left(\prod_{k=\tau_{1}+1}^{\tau_{2}}A_{k}\right)\left(\prod_{k=\tau_{2}+1}^{\tau_{2}^{*}(i)}A_{k}\right)
\end{align*}
and the factors on the right-hand side are independent under $\Prob_{i}$ with
\begin{align*}
\Prob_{i}\left(A_{1}^{j}\prod_{k=\tau_{2}+1}^{\tau_{2}^{*}(i)}A_{k}\in\cdot\right)\ =\ &\Prob_{i}\left(A_{1}^{j}\prod_{k=\tau_{1}+1}^{\tau_{1}^{*}(i)}A_{k}\in\cdot\right)\ =\ \Prob_{i}(\Pi_{\tau_{1}^{*}}(i)\in\cdot)\ =\ \delta_{1}\\
\text{and}\quad&\Prob_{i}\left(\prod_{k=\tau_{1}+1}^{\tau_{2}}A_{k}\in\cdot\right)\ =\ \Prob_{j}(A_{1}^{j}\in\cdot).
\end{align*}
By combining these facts, we find that 
$$ \Prob_{j}(A_{1}^{j}=1)\ =\ \Prob_{i}\left(A_{1}^{j}\prod_{k=\tau_{2}+1}^{\tau_{2}^{*}(i)}A_{k}=1\right)\ =\ 1 $$
as claimed.\qed
\end{proof}

\begin{proof}[of Lemma \ref{lem:solidarity}]
Assuming $\Prob_{i}(A_{1}^{i}c_{i}+B_{1}^{i}=c_{i})=1$ for some $i\in\cS$ and $c_{i}\in\R$, thus $B_{1}^{i}=f(A_{1}^{i})$ $\Prob_{i}$-a.s. for some measurable $f$, we first note that, for all $n\ge 1$,
$$ \sum_{k=1}^{\tau_{n}(i)}\Pi_{k-1}B_{k}\ =\ \sum_{k=1}^{n}\Pi_{\tau_{k-1}(i)}c_{i}(1-A_{k}^{i})\ =\ c_{i}(1-\Pi_{\tau_{n}(i)})\quad\Prob_{i}\text{-a.s.}, $$
i.e., $\sum_{k=1}^{\tau_{n}(i)}\Pi_{k-1}B_{k}=f(\Pi_{\tau_{n}(i)})$ for a measurable function $f$.
We pick again an arbitrary $j\ne i$ and use the notation from the previous proof including $\tau_{n}$ as shorthand for $\tau_{n}(j)$. Then
\begin{align*}
f(\Pi_{\tau_{3}^{*}(i)})\ =\ B_{1}^{j}\,+\,A_{1}^{j}\left[B_{2}^{j}+A_{2}^{j}B_{3}^{j}\right]\,+\,B^{*}
\end{align*}
where $B^{*}:=$ $\sum_{k=\tau_{3}+1}^{\tau_{3}^{*}(i)}\Pi_{k-1}B_{k}$. The left-hand term is deterministic given $A_{1}^{j}$, $\prod_{k=\tau_{1}(j)+1}^{\tau_3(j)} A_{k}$ and $\prod_{k=\tau_{3}(j)+1}^{\tau^{*}(i)} A_{k}$, and this remains of course true 
when additionally conditioning upon $B_{1}^{j}$ and $B^{*}$. As for the term in square brackets on the right-hand side, we then see that it must be deterministic. As this term is independent of the given random variables except for $\prod_{k=\tau_{1}(j)+1}^{\tau_3(j)} A_k=A_{2}^{j}\, A_{3}^{j}$, we arrive at the conclusion 
$$ B_{2}^{j}\,+\,A_{2}^{j}B_{3}^{j}\ =\ g(A_{2}^{j}\, A_{3}^{j})\quad\Prob_{i}\text{-a.s.} $$
for some measurable function $g$. By an appeal to Lemma \ref{lem:Grincevicius}, either $B^{j}=c_{j}(1-A^{j})$ or $(A^{j},B^{j})=(1,c_{j})$ $\Prob_{j}$-a.s. for some $c_{j}\in\R$. Since $j$ was arbitrary, one of these alternatives must hold for all $j\in\cS$. 

Suppose the second alternative to be true for some $j\in\cS$, for there is nothing left to verify otherwise. Then $\Prob(A^{j}=1)=1$ for all $j\in\cS$ by Lemma \ref{lem:solidarity A_1^s=1} which in turn entails the existence of a sequence $(c_{j})_{j\in\cS}$ such that
$$ \Prob_{j}((A^j,B^j)=(1,c_{j}))\ =\ 1\quad\text{for all }j\in\cS, $$
regardless of which alternative is true for any particular $j$. The proof is now completed by an appeal to Lemma \ref{lem:A_1=1,B_1=c}, giving $c_{j}=0$ for all $j\in\cS$ and thus validity of \eqref{degenerate}.\qed
\end{proof}

\begin{Rem}\label{cor:B_1^s=0}\rm
As a direct outcome of the last argument, $\Prob_{i}(A_{1}^{i}=1,B_{1}^{i}=0)=1$ holds either for all $i\in\cS$ or none.
\end{Rem}

\begin{Prop}\label{prop:degeneracy consequences}
Suppose that \eqref{degenerate} holds and $\Prob_{i}(A_{1}^{i}=1)$ $<1$ for some/all $i\in\cS$. Then the sequence $(c_{i})_{i\in\cS}$ in \eqref{degenerate} is unique. Moreover, \eqref{pi-degenerate} is true and, more generally,
\begin{equation}\label{eq:general homology}
\Psi_{1:n}(c_{M_{n}})\ =\ \Pi_{n}c_{M_{n}}+\sum_{k=1}^{n}\Pi_{k-1}B_{k}\ =\ c_{M_{0}}\quad\text{a.s.}
\end{equation}
for any $n\in\N$.
\end{Prop}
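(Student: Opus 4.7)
Uniqueness of $(c_i)_{i \in \cS}$ is immediate: any other solution $(c_i')_{i \in \cS}$ yields $(A_1^i - 1)(c_i - c_i') = 0$ $\Prob_i$-a.s., and the hypothesis $\Prob_i(A_1^i = 1) < 1$ forces equality. Moreover, \eqref{eq:general homology} follows from \eqref{pi-degenerate} by a short induction: putting $V_n := \Pi_n c_{M_n} + \sum_{k=1}^n \Pi_{k-1}B_k$, the increment $V_n - V_{n-1} = \Pi_{n-1}(A_n c_{M_n} + B_n - c_{M_{n-1}})$ vanishes termwise by the time-shifted form of \eqref{pi-degenerate}. Hence the whole problem reduces to deriving \eqref{pi-degenerate} from \eqref{degenerate}.

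My plan is to introduce the ``gap'' process $T_k := V_k - c_{M_0}$. Iterating \eqref{degenerate} shows $T_{\tau_n(i)} = 0$ $\Prob_i$-a.s.\ for all $n$. For an arbitrary $k$, let $\nu \ge k$ denote the first return to $i$ at or after time $k$; the natural cocycle $T_\nu = T_k + \Pi_k\,\tilde T_{\nu-k}^{(k)}$, with $\tilde T^{(k)}$ the time-shifted analogue of $T$, then gives $T_k = -\Pi_k\,\tilde T_{\nu-k}^{(k)}$ $\Prob_i$-a.s. The key observation is that $T_k/\Pi_k$ is measurable with respect to the past up to time $k$, whereas $\tilde T_{\nu-k}^{(k)}$ lies in the strict future and, conditionally on $M_k$, has the law of $\Psi_{1:\sigma_i}(c_i) - c_{M_k}$ under $\Prob_{M_k}$. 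By the Markov property these two sources of randomness are conditionally independent given $M_k$, and two conditionally independent random variables that coincide almost surely must each be conditionally a.s.\ constant. Hence there exists a function $g_i\colon \cS\to\R$ with $g_i(i) = 0$ such that, setting $w_{ij} := c_j + g_i(j)$,
\[
\Pi_k\, w_{i M_k}\ +\ S_k\ =\ c_i\qquad\Prob_i\text{-a.s.\ for every }k\ge 0,
\]
where $S_k := \sum_{l=1}^k \Pi_{l-1} B_l$.

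Comparing this identity at consecutive times $k, k+1$ and dividing by $\Pi_k \ne 0$ yields $A_{k+1}\, w_{i M_{k+1}} + B_{k+1} = w_{i M_k}$ $\Prob_i$-a.s. Because this relation is governed entirely by the kernel $K_{M_k M_{k+1}}$ and holds on every pair $(M_k,M_{k+1})$ reachable under $\Prob_i$, irreducibility extends it to a $\Prob_l$-a.s.\ statement for every $l \in \cS$. Thus $(w_{ij})_{j \in \cS}$ solves an augmented degeneracy relation, and the identical construction carried out with any other base state $l$ produces another solution $(w_{lj})_{j \in \cS}$ with $w_{ll} = c_l$. A uniqueness argument for this augmented relation, exactly parallel to the one for \eqref{degenerate} above (two solutions $(d_j), (d_j')$ give $e_j := d_j - d_j'$ with $e_{M_0} = \Pi_n\,e_{M_n}$ a.s., and evaluation at $n = \tau(i)$ together with $\Prob_i(A_1^i = 1) < 1$ kills $e_i$), forces $w_{ij}$ to be independent of $i$. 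Combined with the boundary values $w_{ii} = c_i$ for every $i$, this gives $w_{ij} = c_j$ for all $i, j$, and the case $k = 0$ of the previous shifted identity is precisely \eqref{pi-degenerate}.

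The main obstacle will be the rigidity step: upgrading ``$T$ vanishes at the return times $\tau_n(i)$'' to the pointwise identity $T_k = -g_i(M_k)\,\Pi_k$ valid for every $k$. This rests on past/future conditional independence given $M_k$ together with the elementary but decisive observation that two conditionally independent, a.s.\ coincident random variables are conditionally deterministic. After this step the remainder is purely algebraic, amounting to the uniqueness of the augmented stochastic fixed-point equation.
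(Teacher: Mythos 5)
Your proof is correct, and it rests on the same key mechanism as the paper's — the rigidity principle that two almost surely coincident random variables which are conditionally independent given $M_{k}$ must be conditionally deterministic, combined with $\Prob_{i}(A_{1}^{i}=1)<1$ to kill residual constants — but the decomposition around which you deploy it is genuinely different. The paper fixes a transition $i\to j$ with $p_{ij}>0$ and sandwiches the single map $\Psi_{1}$ between $i$-blocks, obtaining $\Psi_{1}(c_{j})-c_{i}=\Pi_{\tau_{n}(i)}(\Psi_{\tau_{n}(i)+1}(c_{j})-c_{i})$ on $E_{n}=\{M_{0}=i,\,M_{1}=M_{\tau_{n}(i)+1}=j\}$; constancy of both sides together with $\Prob_{i}(\Pi_{\tau_{n}(i)}=1|E_{n})<1$ for $n\in\{1,2\}$ then forces the constant to vanish, yielding \eqref{pi-degenerate} directly with the original $c_{j}$, and \eqref{eq:general homology} only afterwards by iteration. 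You instead apply the rigidity to the cumulative gap $T_{k}=\Psi_{1:k}(c_{M_{k}})-c_{M_{0}}$ at an arbitrary time against its continuation to the next return to $i$, which produces the stronger intermediate identity $\Pi_{k}w_{iM_{k}}+S_{k}=c_{i}$ with a priori unknown constants $w_{ij}=c_{j}+g_{i}(j)$, so you need the extra uniqueness step for the augmented one-step relation to conclude $w_{ij}=c_{j}$; in exchange you get \eqref{eq:general homology} essentially for free along the way. Two points worth making explicit in a write-up: the conditional constant $g_{i}(j)$ does not depend on $k$ because, by time homogeneity and the Markov property, the conditional law of the future piece given $M_{k}=j$ is the $\Prob_{j}$-law of $\Psi_{1:\sigma}(c_{i})-c_{j}$ with $\sigma$ the first passage time to $i$; and the final identification can be shortened by evaluating your shifted identity at $k=\tau(j)$ under $\Prob_{j}$, which shows that $(w_{ij})_{j\in\cS}$ satisfies \eqref{degenerate} and hence equals $(c_{j})_{j\in\cS}$ by the uniqueness you already established in the first step.
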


\begin{proof}
(a) If \eqref{degenerate} holds, then $\Prob_{i}(A_{1}^{i}=1)<1$ entails
$$ \Prob_{i}\left(\frac{B_{1}^{i}}{1-A_{1}^{i}}=c_{i}\bigg|A_{1}^{i}<1\right)\ =\ 1, $$
and this forces $c_{i}$ to be unique.

\vspace{.1cm}
(b) Turning to \eqref{pi-degenerate}, pick any $i,j\in\cS$ such that $p_{ij}>0$. Then $\Prob_{i}(E_{n})>0$ for all $n\ge 1$, where
$$ E_{n}\ :=\ \{M_{0}=i,M_{1}=M_{\tau_{n}(i)+1}=j\}. $$
Given $E_{n}$, we have $\Psi_{2:\tau_{n}(i)+1}(c_{j})=c_{j}$ and $\Psi_{1:\tau_{n}(i)}(c_{i})=c_{i}$ by \eqref{degenerate} and therefore
\begin{align*}
\Psi_{1}(c_{j})\ =\ \Psi_{1:\tau_{n}(i)+1}(c_{j})\ &=\ c_{i}\ +\ \left(\Psi_{1:\tau_{n}(i)}(\Psi_{\tau_{n}(i)+1}(c_{j}))-\Psi_{1:\tau_{n}(i)}(c_{i})\right)\\
&=\ c_{i}\ +\ \Pi_{\tau_{n}(i)}(\Psi_{\tau_{n}(i)+1}(c_{j})-c_{i})\quad\text{a.s.}
\end{align*}
or, equivalently,
\begin{equation}\label{eq:crucial identity on E_n}
\Psi_{1}(c_{j})-c_{i}\ =\ \Pi_{\tau_{n}(i)}(\Psi_{\tau_{n}(i)+1}(c_{j})-c_{i})\quad\text{a.s.}
\end{equation}
Moreover, $\Psi_{1}(c_{j})-c_{i},\Psi_{\tau_{n}(i)+1}(c_{j})-c_{i}$ are conditionally iid and the second variable also conditionally independent of $\Pi_{\tau_{n}(i)}$ given $E_{n}$.
Now observe that, as another consequence of $\Prob_{i}(A_{1}^{i}=1)<1$,
\begin{align*}
\Prob_{i}(\Pi_{\tau_{k}(i)}=1,\,1\le k\le n|E_{n})\ &=\ \Prob_{i}(A_{k}^{i}=1,\,1\le k\le n|M_{1}=j)\\
&=\ \Prob_{i}(A_{1}^{i}=1|M_{1}=j)\,\Prob_{i}(A_{1}^{i}=1)^{n-1}\ <\ 1
\end{align*}
for all $n\ge 2$ and so $\Prob_{i}(\Pi_{\tau_{n}(i)}=1|E_{n})<1$ for $n=1$ or $n=2$. Going back to \eqref{eq:crucial identity on E_n} for such $n$, we arrive at the conclusion
\begin{align*}
1\,=\,\Prob_{i}(\Psi_{1}(c_{j})-c_{i}=0|E_{n})\,=\,\Prob_{i}(\Psi_{1}(c_{j})-c_{i}=0|M_{1}=j)\,=\,\Prob_{i}(A_{1}c_{j}+B_{1}=c_{i}|M_{1}=j)
\end{align*}
and thus \eqref{pi-degenerate}. Relation \eqref{eq:general homology} then follows by iteration.\qed
\end{proof}

The case $\Prob_{i}(A_{1}^{i}=1)=1$ for all $i\in\cS$ will be treated in the next proposition and bears some differences. Put $\R^{*}:=\R\backslash\{0\}$.

\begin{Prop}\label{prop2:degeneracy consequences}
Assuming \eqref{degenerate} and $\Prob_{i}(A_{1}^{i}=1)=1$ for all $i\in\cS$, the following assertions hold:
\begin{description}[(b)]\itemsep3pt
\item[(a)] There exist functions $f_{A},f_{B}:\cS^{2}\to\R^{*}\times\R$ such that
$$ (A_{1},B_{1})\ =\ (f_{A}(M_{0},M_{1}),f_{B}(M_{0},M_{1}))\quad\text{a.s.} $$
\item[(b)] There exists a family of nonconstant affine linear functions $(\Phi_{ij})_{i,j\in\cS}$, such that $\Phi_{ij}=\Phi_{ji}^{-1}$ for all $i,j\in\cS$, thus $\Phi_{ii}(x)=x$ for all $x\in\R$, and
\begin{align*}
\Prob_{i}(\Psi_{1:n}=\Phi_{ij}|M_{n}=j)\ =\ 1
\end{align*}
whenever $\Prob_{i}(M_{n}=j)>0$, in particular $\Psi_{n}=\Phi_{M_{n-1}M_{n}}$ a.s. for all $n\ge 1$.
\item[(c)] For each $(j,c)\in\cS\times\R$, there exists a sequence $(c_{i})_{i\in\cS}$ such that \eqref{pi-degenerate} and \eqref{eq:general homology} are valid.
\end{description}
\end{Prop}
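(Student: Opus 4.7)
The plan is to deduce (a)--(c) from two rigid identities implied by the hypotheses. Under $\Prob_i$, every excursion $i\to\cdots\to i$ satisfies $\Pi_{\tau_m(i)}=\prod_{k=1}^m A_k^i=1$ a.s.\ for all $m\ge 1$ (from $A_1^i=1$ a.s.\ and Lemma \ref{lem:solidarity A_1^s=1}), and correspondingly $B_1^i=\sum_{k=1}^{\tau(i)}\Pi_{k-1}B_k=0$ a.s.\ (since \eqref{degenerate} with $A_1^i=1$ reduces $A_1^ic_i+B_1^i=c_i$ to $B_1^i=0$). In particular $\Psi_{1:\tau_m(i)}=\mathrm{id}$ a.s.\ under $\Prob_i$ for every $m$.

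For (a), fix $i,j\in\cS$ with $p_{ij}>0$ and pick a positive-probability path $i=i_0,\,i_1=j,\,i_2,\ldots,i_n=i$ with no intermediate return to $i$, so that $E:=\{M_k=i_k,\,0\le k\le n\}\subseteq\{\tau(i)=n\}$ has $\Prob_i(E)>0$. Conditionally on $E$ the pairs $(A_k,B_k)_{1\le k\le n}$ remain independent with $A$-marginal $K_{i_{k-1}i_k}$. The identity $A_1\cdots A_n=1$ a.s.\ on $E$, combined with conditional independence of the nonzero factors, forces each $A_k$ to be a.s.\ constant by standard rigidity: the characteristic function of $\sum_k\log|A_k|$ is identically $1$, so each factor's characteristic function has modulus $1$ for all $t$ and hence $\log|A_k|$ is constant; the $\pm 1$-valued sign is handled analogously on $\{\pm 1\}$. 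Since the conditional law of $A_k$ given the whole chain depends only on $(M_{k-1},M_k)$, this constant depends only on $(i_{k-1},i_k)$, and we may set $f_A(i_{k-1},i_k):=a_k\in\R^{*}$ (nonzero by \eqref{standing assumption}). The relation $\sum_{k=1}^n a_1\cdots a_{k-1}B_k=0$ is then a deterministic linear combination of conditionally independent $B_k$ with coefficient $1$ on $B_1$, and the same rigidity argument yields each $B_k=f_B(i_{k-1},i_k)$ a.s.

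For (b), define $\Phi_{ij}(x):=f_A(i,j)x+f_B(i,j)$ whenever $p_{ij}>0$. By (a) one has $\Psi_k=\Phi_{M_{k-1}M_k}$ a.s., so $\Psi_{1:n}=\Phi_{M_0M_1}\circ\cdots\circ\Phi_{M_{n-1}M_n}$ is a deterministic function of the chain path. This composition depends only on the endpoints $(M_0,M_n)$: given two positive-probability paths $P_1,P_2$ from $i$ to $j$, append a common positive-probability path $Q$ from $j$ back to $i$; both concatenations are excursions $i\to i$ of positive probability, so $\Psi_Q\circ\Psi_{P_r}=\mathrm{id}$ for $r=1,2$, and invertibility of each $\Phi$-factor (as $f_A\ne 0$) yields $\Psi_{P_1}=\Psi_{P_2}=\Psi_Q^{-1}$. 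Extending by irreducibility, this defines $\Phi_{ij}$ for all $i,j\in\cS$ as a nonconstant affine map, with $\Phi_{ii}=\mathrm{id}$ (from any cycle at $i$) and $\Phi_{ij}\circ\Phi_{ji}=\Phi_{ii}=\mathrm{id}$, i.e.\ $\Phi_{ji}=\Phi_{ij}^{-1}$.

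For (c), fix any $j_0\in\cS$ and $c_{j_0}\in\R$ and set $c_i:=\Phi_{ij_0}(c_{j_0})$ for $i\in\cS$. The composition rule $\Phi_{ij}\circ\Phi_{jj_0}=\Phi_{ij_0}$ from (b) gives, whenever $p_{ij}>0$, $A_1c_j+B_1=\Phi_{ij}(c_j)=\Phi_{ij}(\Phi_{jj_0}(c_{j_0}))=\Phi_{ij_0}(c_{j_0})=c_i$ a.s.\ on $\{M_0=i,M_1=j\}$, which is exactly \eqref{pi-degenerate}; iteration (equivalently, $\Psi_{1:n}=\Phi_{M_0M_n}$ a.s.) yields \eqref{eq:general homology}. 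The main obstacle is the rigidity step in (a) — concluding that conditionally independent random variables with a.s.\ constant product must themselves be constant — which is where the full strength of $\Pi_{\tau(i)}=1$ a.s.\ (as opposed to merely $|\Pi_{\tau(i)}|=1$ a.s., the case (C2) of Theorem \ref{thm:fixed-point property}) is essential: absolute-value constancy alone would permit sign oscillations and would not force $(A_k,B_k)$ to be a deterministic function of $(M_{k-1},M_k)$.
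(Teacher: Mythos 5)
Your proposal is correct, and parts (b) and (c) follow the paper's own proof almost verbatim (path maps are deterministic by (a), endpoint-independence by closing any path into an excursion whose map is the identity, then $c_i:=\Phi_{ij_0}(c_{j_0})$ and the composition rule for (c)). Where you genuinely diverge is part (a). The paper exploits the event $E_1=\{M_0=i,\,M_1=M_{\tau(i)+1}=j\}$: since $\Psi_{1:\tau(i)}$ and $\Psi_{2:\tau(i)+1}$ are both the identity there, one gets $A_1x+B_1=A_{\tau(i)+1}x+B_{\tau(i)+1}$ for all $x$, and since $(A_1,B_1)$ and $(A_{\tau(i)+1},B_{\tau(i)+1})$ are conditionally iid given $E_1$, two independent copies that coincide a.s. must be constant. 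You instead work along a single excursion of positive probability and use the rigidity ``a product (resp.\ weighted sum) of conditionally independent variables that is a.s.\ constant has a.s.\ constant factors,'' applied three times (to $\log|A_k|$, to $\sign(A_k)$, and to the $\Pi_{k-1}B_k$), via the standard characteristic-function argument $\prod_k\phi_k\equiv 1\Rightarrow|\phi_k|\equiv 1$. Both rigidity mechanisms are elementary and correct; the paper's is shorter, while yours makes explicit that the degeneracy is forced coordinate-by-coordinate along the excursion. Two cosmetic remarks: your composition order $\Psi_Q\circ\Psi_{P_r}=\mathrm{id}$ should read $\Psi_{P_r}\circ\Psi_Q=\mathrm{id}$ under the backward convention $\Psi_{1:m+n}=\Psi_{1:m}\circ\Psi_{m+1:m+n}$, but since all the maps are invertible affine maps this is immaterial; and the appeal to \eqref{standing assumption} for $f_A\ne 0$ is not needed, as $\Pi_{\tau(i)}=1$ a.s.\ already forces every $A_k$ to be nonzero.
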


\begin{proof}
If \eqref{degenerate} holds, then $1=\Prob_{i}(A_{1}^{i}=1)=\Prob_{i}(A_{1}^{i}=1,B_{1}^{i}=0)$ for all $i\in\cS$ implies that $(c_{i})_{i\in\cS}$ can be chosen arbitrarily.

\vspace{.1cm}
(a) Pick any $i,j\in\cS$ with $p_{ij}>0$ and define $E=E_{1}$ as in part (b) of the previous proof. Then $\Psi_{1:\tau(i)}(x)=\Psi_{2:\tau(i)+1}(x)=x$ a.s. on $E$ implies
\begin{align*}
A_{1}x+B_{1}\ &=\ \Psi_{1}(x)\ =\ \Psi_{1:\tau(i)+1}(x)\ =\ \Psi_{\tau(i)+1}(x)\ =\ A_{\tau(i)+1}x+B_{\tau(i)+1}
\end{align*}
a.s. on $E$ for all $x\in\R$. On the other hand, $(A_{1},B_{1})$ and $(A_{\tau(i)+1},B_{\tau(i)+1})$ are conditionally iid with conditional laws depending only on $i,j$. This clearly implies the assertion.

\vspace{.1cm}
(b) For any $i,j\in\cS^{2}$, we can fix a path $j\to j_{1}\to...\to j_{n-1}\to i$ of minimal length such that $\Prob_{j}(M_{1}=j_{1},...,M_{n-1}=j_{n-1},M_{n}=i)>0$. Conditioned on this event, the map $\Psi_{1:n}$ is deterministic by (a) and denoted $\Phi_{ji}$. For any further path $i\to i_{1}\to...\to i_{m-1}\to j$ of positive probability it then follows that $\tau_{k}(i)=m+n$ for some $k\in\N$ on
\begin{align}\label{eq:def of event E}
E\ :=\ \{M_{0}=i,\,M_{k}=i_{k},1\le k<m,\,M_{m}=j,\,M_{m+l}=j_{l},1\le l<n,M_{m+n}=i\}
\end{align}
and thereupon
$$ x\ =\ \Psi_{1:m+n}(x)\ =\ \Psi_{1:m}\circ\Psi_{m+1:m+n}(x)\ =\ \Psi_{1:m}\circ\Phi_{ji}(x) $$
a.s. on $E$, i.e. $\Psi_{1:m}=\Phi_{ji}^{-1}$. Moreover, the maps $\Psi_{1:m}$ are all identical when conditioned upon a path of arbitrary length $m$ from $M_{0}=i$ to $M_{m}=j$, giving $\Psi_{1:m}=\Phi_{ij}$.

\vspace{.1cm}
(c) Fix any $(j,c)\in\cS\times\R$ and put $c_{i}:=\Phi_{ij}(c)$ for $i\in\cS$. Then $c_{M_{0}}=\Phi_{M_{0}j}$ in combination with
$$ A_{1}+c_{M_{1}}\ =\ \Phi_{M_{0}M_{1}}(c_{M_{1}})\ =\ \Phi_{M_{0}M_{1}}(\Phi_{M_{1}j}(c))\ =\ \Phi_{M_{0}j}(c)\quad\text{a.s.} $$
shows  \eqref{pi-degenerate} and then \eqref{eq:general homology} again upon iteration.\qed
\end{proof}

\begin{Lemma}\label{lem:B_1^s>0}
Suppose $\Prob_{\pi}(A=0)=0$. Then the following assertions are true:
\begin{description}[(b)]\itemsep3pt
\item[(a)] $\Prob_{i}(B_{1}^{i}=0)=1$ for some $i\in\cS$ implies \eqref{pi-degenerate}.
\item[(b)] $\Prob_{i}(B_{1}^{i}=0)=1$ for all $i\in\cS$ implies $\Prob_{\pi}(B=0)=1$ or $\Prob_{i}(A_{1}^{i}=1)=1$ for all $i\in\cS$.
\end{description}
\end{Lemma}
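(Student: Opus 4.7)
The plan is to obtain both parts as fairly direct consequences of the solidarity lemma (Lemma \ref{lem:solidarity}) and the structural results of Propositions \ref{prop:degeneracy consequences} and \ref{prop2:degeneracy consequences}. The key observation for part (a) is that the assumption $\Prob_{i}(B_{1}^{i}=0)=1$ reads exactly as $\Prob_{i}(A_{1}^{i}\cdot 0 + B_{1}^{i}=0)=1$, which is the degeneracy identity $\Prob_{i}(A_{1}^{i}c_{i}+B_{1}^{i}=c_{i})=1$ with the choice $c_{i}=0$. So Lemma \ref{lem:solidarity} applies and yields \eqref{degenerate} for a suitable family $(c_{j})_{j\in\cS}$ (with $c_{i}=0$). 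To pass from \eqref{degenerate} to the claimed \eqref{pi-degenerate}, I split into the two mutually exclusive alternatives provided by Lemma \ref{lem:solidarity A_1^s=1}: either $\Prob_{j}(A_{1}^{j}=1)<1$ for all $j\in\cS$, in which case Proposition \ref{prop:degeneracy consequences} directly gives \eqref{pi-degenerate}; or $\Prob_{j}(A_{1}^{j}=1)=1$ for all $j\in\cS$, in which case Proposition \ref{prop2:degeneracy consequences}(c) produces a sequence $(c_{j})_{j\in\cS}$ satisfying \eqref{pi-degenerate}.

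For part (b), I would first invoke part (a), which produces a sequence $(c_{j})_{j\in\cS}$ satisfying \eqref{pi-degenerate}. I then consider the two cases from Lemma \ref{lem:solidarity A_1^s=1} once more. If $\Prob_{j}(A_{1}^{j}=1)=1$ for some, hence all $j\in\cS$, the second alternative in the conclusion of (b) already holds and nothing more is needed. In the remaining case $\Prob_{j}(A_{1}^{j}=1)<1$ for all $j\in\cS$, the hypothesis $B_{1}^{j}=0$ $\Prob_{j}$-a.s. combined with \eqref{degenerate} (which holds with the same $c_{j}$'s, by part (a)) gives
\begin{equation*}
c_{j}(1-A_{1}^{j})\ =\ c_{j}-A_{1}^{j}c_{j}\ =\ B_{1}^{j}\ =\ 0\quad\Prob_{j}\text{-a.s.}
\end{equation*}
Since $\Prob_{j}(A_{1}^{j}\ne 1)>0$, the factor $1-A_{1}^{j}$ is nonzero on an event of positive probability, forcing $c_{j}=0$ for every $j\in\cS$. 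Substituting $c_{j}\equiv 0$ into \eqref{pi-degenerate} then yields $\Prob_{\pi}(B_{1}=0)=1$, which is the first alternative in the conclusion of (b).

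The only mild subtlety, which I treat as the main (though modest) obstacle, is ensuring that the $(c_{j})$ delivered by Lemma \ref{lem:solidarity} in part (a) may indeed be taken with $c_{i}=0$ at the distinguished index; this is automatic because the proof of Lemma \ref{lem:solidarity} in conjunction with Proposition \ref{prop:degeneracy consequences}(a) shows that, in the nondegenerate case $\Prob_{j}(A_{1}^{j}=1)<1$, the $c_{j}$'s are uniquely determined by \eqref{degenerate}, while in the degenerate case the $c_{j}$'s are freely prescribable, cf.\ Remark \ref{cor:B_1^s=0}. Everything else is bookkeeping, and no independent auxiliary computation beyond what is already in Section \ref{sec:degeneracy} is required.
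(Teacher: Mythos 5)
Your proposal is correct. Part (a) follows the paper verbatim: recognize the hypothesis as the identity $\Prob_{i}(A_{1}^{i}c_{i}+B_{1}^{i}=c_{i})=1$ with $c_{i}=0$, invoke Lemma \ref{lem:solidarity} to get \eqref{degenerate}, and then pass to \eqref{pi-degenerate} via Propositions \ref{prop:degeneracy consequences} and \ref{prop2:degeneracy consequences}.

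For part (b) you take a genuinely different and shorter route than the paper. The paper argues by contradiction from scratch: assuming $\Prob_{\pi}(B=0)<1$ and $\Prob_{i}(A_{1}^{i}=1)<1$ for all $i$, it picks states $j,k$ with $p_{kj}>0$ and $\Prob_{k}(B_{1}=0|M_{1}=j)<1$, constructs two events $E_{1},E_{2}$ along explicit return paths, and exploits the conditional independence structure of the Markov modulation to force $\Prob_{k}(B_{1}=0)=1$ or $\Prob_{k}(A_{1}^{k}=1)=1$, contradicting the choice of $j,k$. You instead observe that the hypothesis for all $j$ gives \eqref{degenerate} directly with $c_{j}\equiv 0$, so that in the case $\Prob_{j}(A_{1}^{j}=1)<1$ the uniqueness clause of Proposition \ref{prop:degeneracy consequences} pins the sequence down to $c_{j}\equiv 0$ (your explicit computation $c_{j}(1-A_{1}^{j})=B_{1}^{j}=0$ on $\{A_{1}^{j}\ne 1\}$ is exactly how uniqueness is seen), and then \eqref{pi-degenerate} with $c_{j}\equiv 0$ reads $\Prob_{\pi}(B_{1}=0)=1$. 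There is no circularity, since neither Lemma \ref{lem:solidarity} nor the two propositions rely on the present lemma. What your route buys is economy: the path-construction work is done once, inside the proof of Proposition \ref{prop:degeneracy consequences}(b), and part (b) becomes a two-line corollary of (a) plus uniqueness; the paper's version is self-contained at the cost of repeating an argument of the same flavour. Your closing remark about prescribing $c_{i}=0$ at the distinguished index is correct but, as you note, only matters in the nondegenerate case, where it is forced by uniqueness anyway.
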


\begin{proof}
(a) First note that $\Prob_{i}(B_{1}^{i}=0)=1$ for some $i\in\cS$ entails $\Prob_{i}(A_{1}^{i}c_{i}+B_{1}^{i}=c_{i})=1$. Now \eqref{degenerate} follows by Lemma \ref{lem:solidarity} and then \eqref{pi-degenerate} by an appeal to the previous two propositions.

\vspace{.1cm}
(b) By assumption, we have
$$ \sum_{l=1}^{\tau_{n}(i)}\Pi_{l-1}B_{l}\ =\ \sum_{l=1}^{n}\Pi_{\tau_{l-1}(i)}B_{l}^{i}\ =\ 0\quad\Prob_{i}\text{-a.s.} $$
for all $n\ge 0$ and $i\in\cS$. If $\Prob_{\pi}(B=0)<1$ and $\Prob_{i}(A_{1}^{i}=1)<1$ for all $i\in\cS$, we can find $j,k\in\cS$ such that $p_{kj}>0$ and
$$ \Prob_{k}(B_{1}=0|M_{1}=j)<1 $$
and also $n_{0},n_{1}\in\N$ such that
$$ \Prob_{j}(\tau(k)=n_{0})>0\quad\text{and}\quad\Prob_{k}(\tau(k)=n_{1},\Pi_{n_{1}}\ne 1)>0. $$
Now, if $\Prob_{i}(B_{1}^{i}=0)=1$ for all $i\in\cS$, we have on $E_{1}:=\{M_{0}=j,\tau(k)=n_{0},M_{n_{0}+1}=j\}$ that
\begin{align*}
0\ &=\ \sum_{l=1}^{n_{0}}\Pi_{l-1}B_{l}\,+\,\Pi_{n_{0}}B_{n_{0}+1}\quad\text{a.s.},
\end{align*}
while on $E_{2}=\{M_{0}=M_{n_{0}+n_{1}+1}=j,\tau(k)=n_{0},\tau_{2}(k)=n_{0}+n_{1}\}$
\begin{align*}
0\ &=\ \sum_{l=1}^{n_{0}+n_{1}+1}\Pi_{l-1}B_{l}\ =\ \sum_{l=1}^{n_{0}}\Pi_{l-1}B_{l}\,+\,\Pi_{n_{0}}B_{2}^{k}\,+\,\Pi_{n_{0}}A_{2}^{k}B_{n_{0}+n_{1}+1}\\
&=\ \sum_{l=1}^{n_{0}}\Pi_{l-1}B_{l}\,+\,\Pi_{n_{0}}A_{2}^{k}B_{n_{0}+n_{1}+1}\quad\text{a.s.}
\end{align*}
must hold. Consequently,
\begin{align*}
1\ &=\ \Prob\left(\sum_{l=1}^{n_{0}}\Pi_{l-1}B_{l}\,+\,\Pi_{n_{0}}A_{2}^{k}B_{n_{0}+n_{1}+1}=0\Bigg|E_{2}\right)\\
&=\ \Prob\left(\sum_{l=1}^{n_{0}}\Pi_{l-1}B_{l}\,+\,\Pi_{n_{0}}B_{n_{0}+1}=0\Bigg|E_{1}\right)\\
&=\ \Prob\left(\sum_{l=1}^{n_{0}}\Pi_{l-1}B_{l}\,+\,\Pi_{n_{0}}B_{n_{0}+n_{1}+1}=0\Bigg|E_{2}\right),
\end{align*}
where the last equality follows because the conditional law of $\sum_{l=1}^{n_{0}}\Pi_{l-1}B_{l}\,+\,\Pi_{n_{0}}B_{n_{0}+1}$ given $E_{1}$ coincides with the conditional law of $\sum_{l=1}^{n_{0}}\Pi_{l-1}B_{l}\,+\,\Pi_{n_{0}}B_{n_{0}+n_{1}+1}$ given $E_{2}$, due to the Markov-modulated structure. We thus arrive at
$$ \Prob\left(\Pi_{n_{0}}B_{n_{0}+n_{1}+1}(1-A_{2}^{k})=0|E_{2}\right)\ =\ 1 $$
and thereupon, due to conditional independence, at the conclusion that
$$ \Prob(B_{n_{0}+n_{1}+1}=0|E_{2})\ =\ \Prob_{k}(B_{1}=0)\ =\ 1\quad\text{or}\quad\Prob(A_{2}^{k}=1|E_{2})\ =\ \Prob_{k}(A_{1}^{k}=1)\ =\ 1 $$
which is impossible by construction.\qed
\end{proof}

\begin{Exa}\label{exa:B_1^s>0}\rm
Here is an example where $\Prob_{i}(B^{i}=0)=1$ holds for some, but not all $i\in\cS$. Suppose that $\cS=\{0,1,2,3\}$, $0<p_{01}=1-p_{02}<1$, $p_{23}=p_{30}=p_{10}=1$, $\Prob_{\pi}(B=1)=1$, and
\begin{align*}
\Prob_{0}(A_{1}=-1,A_{2}=1|M_{1}=1)\ =\ \Prob_{0}(A_{1}=-3/2,A_{2}=-1/3,A_{3}=1|M_{1}=2)\ =\ 1.
\end{align*}
Then one can easily check that $B_{1}^{0}=\sum_{k=1}^{\tau(0)}\Pi_{k-1}=0$ $\Prob_{0}$-a.s., whereas $\Prob_{i}(B_{1}^{i}=0)<1$ for any other $i\in\cS$.
\end{Exa}

We will need further information on $(\Pi_{n})_{n\ge 0}$ in the case when $\Prob_{i}(A_{1}^{i}=1)=1$ for all $i\in\cS$. Let $\sign(x)$ denote the sign of $x$, write
$$ \Pi_{n}\ =\ \sign(\Pi_{n})\,|\Pi_{n}|\ =\ \prod_{k=1}^{n}\sign(A_{k})\,|A_{k}|, $$ 
and observe that $(\log|\Pi_{n}|)_{n\ge 0}$ is null-homologous, that is
$$ \log|A_{n}|\ =\ \log|f_{A}(M_{n-1},M_{n})|\ =\ g(M_{n})-g(M_{n-1})\quad\text{a.s.} $$
for a suitable function $g:\cS\to\R$. Information on $\sign(\Pi_{n})$ is provided by the next lemma.

\begin{Lemma}\label{lem:sign Pi_n}
Assuming $\Prob_{i}(A_{1}^{i}=1)=1$ for all $i\in\cS$, there exists a sequence $(\sigma_{j})_{j\in\cS}$ in $\{\pm 1\}$ such that, for all $n\ge 1$,
\begin{equation}\label{eq:sign Pi_n}
\sign(\Pi_{n})\ =\ \sigma_{M_{n}}/\sigma_{M_{0}}\quad\text{a.s.}
\end{equation}
and therefore
\begin{equation}\label{eq:Pi_n null-homologous}
\Pi_{n}\ =\ \sigma_{M_{n}}\,e^{g(M_{n})-g(M_{0})}/\sigma_{M_{0}}\quad\text{a.s.}
\end{equation}
\end{Lemma}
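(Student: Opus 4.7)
The plan is to treat the modulus $|\Pi_n|$ and the sign $\sign(\Pi_n)$ of $\Pi_n$ separately. Since the hypothesis $\Prob_i(A_1^i=1)=1$ gives in particular $|\Pi_{\tau(i)}|=1$ $\Prob_i$-a.s., the MRW $(M_n,-\log|\Pi_n|)_{n\ge 0}$ falls into case (T2) of Proposition \ref{prop:trichotomy MRW} and is therefore null-homologous by \cite[Lemma 4.1]{AlsBuck:16} (cf.\ \eqref{NH1}--\eqref{NH2}). This yields a function $g:\cS\to\R$ with $\log|A_n|=g(M_n)-g(M_{n-1})$ a.s.\ and hence $|\Pi_n|=\exp(g(M_n)-g(M_0))$ a.s.\ for every $n\ge 0$, as already pointed out in the lines preceding the lemma.

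The real work lies in the construction of the sign sequence $(\sigma_j)_{j\in\cS}$, and this is where I expect the only obstacle. I would consider $S_n:=\sign(\Pi_n)\in\{\pm 1\}$ so that $(M_n,S_n)_{n\ge 0}$ is a Markov chain on $\cS\times\{\pm 1\}$ with $S_0=1$ and $S_{\tau_k(i)}=1$ $\Prob_i$-a.s.\ for every $k\ge 1$ and $i\in\cS$. Fixing a reference state $i_0$ and setting $\sigma_{i_0}:=1$, the claim to establish is that, under $\Prob_{i_0}$, there is for each $j\in\cS$ a unique $\sigma_j\in\{\pm 1\}$ with $S_n=\sigma_j$ a.s.\ on $\{M_n=j\}$ (and $j$ is reached from $i_0$ by irreducibility of $(M_n)_{n\ge 0}$). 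The argument is by contradiction via the Markov modulation: suppose both $(j,+1)$ and $(j,-1)$ were visited with positive $\Prob_{i_0}$-probability. Pick an $M$-path $\vec\jmath=(j,j_1,\ldots,j_{m-1},i_0)$ of positive probability from $j$ back to $i_0$; the conditional distribution of $\prod_{k=1}^{m}\eps_k$ (with $\eps_k:=\sign(A_k)$) given this sequence of states depends only on the transitions and is independent of the past, in particular of $S_n$. On $\{M_{n+m}=i_0\}$ the product equals $1/S_n$ a.s., which by independence forces this conditional distribution to be the point mass at $+1$ starting from $(j,+1)$ and at $-1$ starting from $(j,-1)$; but the two conditional distributions coincide, contradiction.

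With $(\sigma_j)_{j\in\cS}$ so constructed, the identity \eqref{eq:sign Pi_n} holds under $\Prob_{i_0}$ by definition. To promote it to arbitrary $\Prob_i$, I would pick $T\in\N$ with $\Prob_{i_0}(M_T=i)>0$ (again by irreducibility), use that $S_T=\sigma_i$ a.s.\ on $\{M_T=i\}$, and invoke the strong Markov property to identify the conditional law of $(M_{T+n},S_{T+n}/\sigma_i)_{n\ge 0}$ given $\{M_T=i\}$ with the law of $(M_n,S_n)_{n\ge 0}$ under $\Prob_i$; since the former satisfies $S_{T+n}/\sigma_i=\sigma_{M_{T+n}}/\sigma_i$ a.s., so does the latter, giving $S_n=\sigma_{M_n}/\sigma_i=\sigma_{M_n}/\sigma_{M_0}$ $\Prob_i$-a.s. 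Combining \eqref{eq:sign Pi_n} with the modulus formula from the first step yields \eqref{eq:Pi_n null-homologous}.\qed
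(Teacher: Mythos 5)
Your proposal is correct and follows essentially the same route as the paper: closed loops through a reference state have sign $+1$ a.s.\ (since $\Pi_{\tau_k(i)}=1$ $\Prob_i$-a.s.), and conditional independence of disjoint path segments given the states then forces $\sign(\Pi_n)$ to be a deterministic function of the endpoints, after which $(\sigma_j)$ is defined relative to a fixed $i_0$. The only cosmetic difference is that you anchor everything at $i_0$ and transfer to general $\Prob_i$ via the Markov property, whereas the paper builds a two-argument $\sigma^*(i,j)$ and verifies its cocycle property $\sigma^*(i,j)=\sigma^*(i,k)\sigma^*(k,j)$ before setting $\sigma_i:=\sigma^*(i_0,i)$.
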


Plainly, $\sigma_{i}\sigma_{j}=\sigma_{j}/\sigma_{i}$ for all $i,j\in\cS$. We have chosen the ratio form because of its mnemonic appeal in connection with null-homology.

\begin{proof}
The following argument is very similar to the one used in the proof of Proposition \ref{prop2:degeneracy consequences}(b): For any $i,j\in\cS$, we fix a path $j\to j_{1}\to...\to j_{n-1}\to i$ of minimal length with $\Prob_{j}(M_{1}=j_{1},...,M_{n-1}=j_{n-1},M_{n}=i)>0$. Conditioned on this event, $\sign(\Pi_{n})$ is deterministic and denoted $\sigma^{*}(i,j)$. For any further path $i\to i_{1}\to...\to i_{m-1}\to j$ of positive probability it then follows that $\tau_{k}(s)=m+n$ for some $k\in\N$ and thus $1=\sign(\Pi_{m+n})=\sign(\Pi_{n})\sigma^{*}(i,j)$ on the event $E$ as defined by \eqref{eq:def of event E}. Hence, $\sign(\Pi_{n})=\sigma^{*}(i,j)$ on $E$, regardless of the particular choice of $i_{1},...,i_{n-1}$. We conclude that $\sign(\Pi_{n})$ given $M_{0},...,M_{n}$ a.s. depends only on the endpoints $M_{0},M_{n}$ for any $n\ge 1$, hence
\begin{equation*}
\sign(\Pi_{n})\ =\ \sigma^{*}(M_{0},M_{n})\quad\text{a.s.}
\end{equation*}
One can easily verify that
\begin{align*}
&\sigma^{*}(i,i)=1,\\
&\sigma^{*}(i,j)=\sigma^{*}(j,i)\\
\text{and}\quad&\sigma^{*}(i,j)=\sigma^{*}(i,k)\sigma^{*}(k,j)
\end{align*}
for all $i,j,k\in\cS$. But this implies that $\sigma^{*}(i,j)=\sigma_{j}/\sigma_{i}$ and thus \eqref{eq:sign Pi_n} when defining $\sigma_{i}:=\sigma(i_{0},i)$ for any fixed element $i_{0}\in\cS$.\qed
\end{proof}

\section{Proof of Theorem \ref{thm:a.s. convergence}}\label{sec:proof 1st thm}

For the equivalence of (a)--(e), we must only show ``(c)$\RA$(b)'' and ``(b)$\RA$(e)'' because
the implications ``(e)$\RA$(a)$\RA$(d)$\RA$(c)'' are trivial. For $i\in\cS$ and $n\ge 1$, we define 
$$ W_{n}^{i}\ :=\ \max_{\tau_{n-1}(i)<k\le\tau_{n}(i)}\left(\prod_{l=1}^{k-1}A_{\tau_{n-1}(i)+l}\right)B_{k} $$ 
(with $\tau_{0}(i):=0$). Under each $\Prob_{j}$, the $W_{n}^{i}$, $n\ge 2$, are iid and independent of $W_{1}^{i}=W^{i}$ with common law $\Prob_{i}(W^{i}\in\cdot)$.

\vspace{.1cm}
``(c)$\RA$(b)'' Suppose first that $\Pi_{\tau_{n}(i)}$ does not converge to 0 a.s. for some/all $i\in\cS$. By Proposition \ref{prop:trichotomy embedded RW}, this implies $\limsup_{n\to\infty}|\Pi_{\tau_{n}(i)}|=\infty$ a.s., for $\Prob_{i}(|\Pi_{\tau(i)}|=1)<1$ is assumed. Recalling \eqref{standing assumption}, we may pick $i$ such that $\Prob_{i}(B=0)<1$. Note that, for any $n\ge 1$ and $j\in\cS$,  $\Pi_{\tau_{n}(i)}$ and $B_{\tau_{n}(i)+1}$ are independent under $\Prob_{j}$ with $\Prob_{j}(B_{\tau_{n}(i)+1}\in\cdot)=\Prob_{i}(B\in\cdot)$. But then
$$ \limsup_{n\to\infty}|\Pi_{n-1}B_{n}|\ \ge\ \limsup_{n\to\infty}|\Pi_{\tau_{n}(i)}B_{\tau_{n}(i)+1}|\ =\ \infty\quad\text{a.s.} $$
which contradicts the second assertion of (c).

\vspace{.1cm}
If $\Erw_{i}J_{i}(\log^{+}W^{i})=\infty$ and thus a fortiori $\Erw_{i}\log^{+}W^{i}=\infty$ for some $i\in\cS$, then we use Erickson's lemma \cite[Lemma 4]{Erickson:73} (in a slightly more general form also used in \cite[Lemma 5.2]{GolMal:00} and proved as Lemma 7.1 in \cite {AlsBuck:16}) to infer
$$ \limsup_{n\to\infty}\frac{\log^{+}W_{n+1}^{i}}{\sum_{k=1}^{n}(S_{\tau_{k}(i)}-S_{\tau_{k-1}(i)})^{+}}\ =\ \infty\quad\text{a.s.} $$
and thereby
\begin{align*}
\infty\ &=\ \limsup_{n\to\infty}\left[\,\sum_{k=1}^{n}(S_{\tau_{k}(i)}-S_{\tau_{k-1}(i)})^{+} \left(-1+ \frac{\log^{+}(W_{n+1}^{i})}{\sum_{k=1}^{n}(S_{\tau_{k}(i)}-S_{\tau_{k-1}(i)})^{+}}\right)\right]\\
&\le\ \limsup_{n\to\infty}\left[-S_{\tau_{n}(i)}+\log^{+}( W_{n+1}^{i})\right]\\
&=\ \limsup_{n\to\infty}\left[\log^{+}|\Pi_{\tau_{n}(i)}W_{n+1}^{i}|\right]\quad\text{a.s.}
\end{align*}
Hence, $\infty=\limsup_{n\to\infty}|\Pi_{\tau_{n}(i)}W_{n+1}^{i}|=\limsup_{n\to\infty}|\Pi_{n-1}B_{n}|$ a.s. which again contradicts (c).

\vspace{.2cm}
``(b)$\RA$(e)'' Evidently, (e) follows if we can show the stronger assertion
\begin{equation}\label{eq:exponential decay}
\lim_{n\to\infty}e^{cn}\Pi_{n}B_{n+1}\ =\ 0\quad\text{a.s. for some }c>0.
\end{equation}
Fix any $i\in\cS$, put $N(n):=\sup\{k\ge 1:\tau_{k}(i)\le n\}$ and note that $n/N(n)\to\Erw_{i}\tau(i)$ a.s. by the elementary renewal theorem, thus $n/\tau_{N(n)}(i)\to 1$ a.s. The latter entails
$$ e^{cn}\Pi_{n}B_{n+1}\ \le\ e^{cn}\Pi_{\tau_{N(n)}(i)}W_{N(n)+1}^{i}\ \asymp\ e^{c\tau_{N(n)}(i)}\Pi_{\tau_{N(n)}(i)}W_{N(n)+1}^{i}\quad\text{a.s.} $$
as $n\to\infty$, where $f(n)\asymp g(n)$ means that $0<\liminf_{n\to\infty}\frac{f(n)}{g(n)}\le\limsup_{n\to\infty}\frac{f(n)}{g(n)}<\infty$.
Since $(e^{c\tau_{n}(i)}\Pi_{\tau_{n}(i)}W_{n+1}^{i})_{n\ge 0}$ forms a subsequence of $(e^{cn}\Pi_{n}B_{n+1})_{n\ge 0}$, we see that \eqref{eq:exponential decay} is equivalent to
$$ \lim_{n\to\infty}e^{c\tau_{n}(i)}\Pi_{\tau_{n}(i)}W_{n+1}^{i}\ =\ 0\quad\text{a.s. for some }c>0 $$
which, after a logarithmic transformation, takes the form
\begin{equation}\label{eq:exponential decay additive}
\lim_{n\to\infty}\left(S_{\tau_{n}(i)}-c\tau_{n}(i)-\log W_{n+1}^{i}\right)\ =\ \infty\quad\text{a.s. for some }c>0.
\end{equation}

\vspace{.1cm}
By assumption, $S_{\tau_{n}(i)}\to\infty$ a.s. so that either 
\begin{align*}
&\Erw_{i}S_{\tau(i)}\in (0,\infty)\quad\text{and}\quad\frac{S_{\tau_{n}(i)}}{\tau_{n}(i)}\ \to\ \Erw_{i}S_{\tau(i)}\quad\text{a.s.},
\shortintertext{or}
&\Erw_{i}|S_{\tau(i)}|=\infty\quad\text{and}\quad\frac{S_{\tau_{n}(i)}}{\tau_{n}(i)}\ \to\ \infty\quad\text{a.s.}
\end{align*}
For the last statement, we refer to Kesten's trichotomy \cite{Kesten:70} (see also \cite[Thm. 4 on p.~156]{Chow+Teicher:97}) in the case when $\Erw_{i}S_{\tau(i)}^{+}=\Erw_{i}S_{\tau(i)}^{-}=\infty$. Put $c:=\Erw_{i}S_{\tau(i)}/2$ in the first case, and $c=1$ in the second case. Then we infer
$$ \lim_{n\to\infty}\frac{\tau_{n}(i)}{S_{\tau_{n}(i)}}\ \le\ \frac{1}{2c}\quad\text{a.s.} $$
Furthermore, it has been shown in \cite{AlsBuck:16} (see the proof of Thm.~5.1, ``(b)$\RA$(a)'') that
\begin{equation*}
\limsup_{n\to\infty}\frac{\log^{+}W_{n+1}^{i}}{S_{\tau_{n}(i)}}\ =\ 0\quad\text{a.s.}
\end{equation*}
By combining these facts, we finally obtain
\begin{align*}
\liminf_{n\to\infty}&\left(S_{\tau_{n}(i)}-c\tau_{n}(i)-\log^{+}W_{n+1}^{i}\right)\\
&=\ \lim_{n\to\infty}S_{\tau_{n}(i)}\left(1-c\,\frac{\tau_{n}(i)}{S_{\tau_{n}(i)}}-\frac{\log^{+}W_{n+1}^{i}}{S_{\tau_{n}(i)}}\right)\\
&\ge\ \lim_{n\to\infty}S_{\tau_{n}(i)}\left(\frac{1}{2}-\frac{\log^{+}W_{n+1}^{i}}{S_{\tau_{n}(i)}}\right)\ =\ \infty\quad\text{a.s.}
\end{align*}
and thus \eqref{eq:exponential decay additive}.

\vspace{.2cm}
Turning to the last assertion of the theorem, suppose that $\Psi_{1:n}(Z_{0})$ converges a.s. to a proper limit for any admissible $Z_{0}$. Then
\begin{equation}\label{eq:pi_n(Z_0)-Pi_n(0)}
\Psi_{1:n}(Z_{0})-\Psi_{1:n}(0)\ =\ \Pi_{n}Z_{0}
\end{equation}
does the same for any admissible nonzero $Z_{0}$ and so either $\Pi_{n}\to 0$ a.s. or $\Prob_{\pi}(A=1)=1$. The proof is completed by excluding the last alternative. But $\Prob_{\pi}(A=1)=1$ entails that $\Psi_{1:n}(0)=\sum_{k=1}^{n}B_{k}$, $n\ge 0$, forms a MRW which, by Proposition \ref{prop:trichotomy MRW} and the subsequent remarks, converges a.s. to a proper random variable iff it is null-homologous with $g\equiv 0$, giving $\Prob_{\pi}(B=0)=1$. But the latter is ruled out by \eqref{standing assumption}.\qed

\section{Proof of Theorem \ref{thm:weak convergence}}\label{sec:proof 2nd thm}

The result will be proved separately for the three possible regimes (T1')--(T3') for the multiplicative RW $(\Pi_{\tau_{n}(i)})_{n\ge 0}$ (which is the same for all $i\in\cS$, see Proposition \ref{prop:trichotomy embedded RW}). More precisely, we will show that \ref{thm:weak convergence}(a) provides the necessary and sufficient condition for distributional convergence of $\Psi_{1:n}(Z_{0})$ under (T1'), while \ref{thm:weak convergence}(b) and \ref{thm:weak convergence}(c) do so under (T2') and (T3'), respectively.

\subsection{The case $\lim_{n\to\infty}\Pi_{\tau_{n}(i)}=0$ a.s.}\label{subsec:cond (a)}

We begin with some preliminary facts. For any fixed $i\in\cS$, Lemma \ref{lem:regenerative process} in the Appendix provides us with the distributional convergence of $\ovl{A}_{N(n)}^{i}$ as well as $\ovl{B}_{N(n)}^{i}$ under $\Prob_{\pi}$, where as before $N(n)=\sup\{k\ge 1:\tau_{k}(i)\le n\}$ and
\begin{align*}
\ovl{A}_{n}^{i}\ &:= |\Pi_{\tau_{n-1}(i)}|^{-1}\max_{\tau_{n-1}(i)<k\le\tau_{n}(i)}|\Pi_{k}|,\\
\ovl{B}_{n}^{i}\ &:=\ |\Pi_{\tau_{n-1}(i)}|^{-1}\max_{\tau_{n-1}(i)<k\le\tau_{n}(i)}\left|\sum_{l=1}^{k}\Pi_{l-1}B_{l}\right|
\end{align*}
for $n\ge 1$. As $\Pi_{\tau_{N(n)}(i)}\to 0$ a.s. by the proviso of this subsection, Slutsky's theorem implies
\begin{align}
&|\Pi_{n}|\ \le\ \left|\Pi_{\tau_{N(n)}(i)}\ovl{A}_{N(n)}^{i}\right|\ \xrightarrow{\Prob_{\pi}}\ 0,\quad\text{thus}\quad |\Pi_{n}|\ \xrightarrow{\Prob_{\pi}}\ 0,\label{eq:Pi_n to 0}
\shortintertext{and}
&\max_{\tau_{N(n)}(i)<k\le\tau_{N(n)}(i)+1}\left|\sum_{l=1}^{k}\Pi_{l-1}B_{l}\right|\ \le\ \left|\Pi_{\tau_{N(n)}(i)}\ovl{B}_{N(n)}^{i}\right|\ \xrightarrow{\Prob_{\pi}}\ 0.\label{eq:B-excursions}
\end{align}

The next lemma uses an approach of Goldie and Maller \cite[Lemma 5.5]{GolMal:00}. 

\begin{Lemma}
Suppose \eqref{standing assumption} and $\Pi_{\tau_{n}(j)}\to 0$ a.s. for all $j\in\cS$. Then $\Erw_{i}J_{i}(\log^{+}|B_{1}^{i}|)=\infty$ for some $i\in\cS$ implies $|\Psi_{1:n}(Z_{0})|\xrightarrow{\Prob_{\pi}}\infty$ for any admissible $Z_{0}$.
\end{Lemma}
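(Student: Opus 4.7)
The plan is to adapt Goldie and Maller's Lemma~5.5 to the Markov-modulated setting by exploiting the regenerative structure at returns to the state $i\in\cS$ for which the moment condition fails. The key observation is that $\Psi_{1:\tau_{m}(i)}(Z_{0})=\Psi_{1:m}^{i}(Z_{0})$ is the backward iteration of the affine maps $\Psi_{n}^{i}(x)=A_{n}^{i}x+B_{n}^{i}$, which are iid under $\Prob_{i}$ and, under any $\Prob_{j}$, become iid with the $\Prob_{i}$-law after the first block. The associated multiplicative walk $\Pi_{\tau_{n}(i)}$ tends to $0$ a.s. by hypothesis, and $J_{i}$ coincides with Goldie--Maller's $J$-function for this iid system. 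A first step is the reduction to the return-time subsequence: writing
$$
\Psi_{1:n}(Z_{0})-\Psi_{1:\tau_{N(n)}(i)}(Z_{0})\,=\,\bigl(\Pi_{n}-\Pi_{\tau_{N(n)}(i)}\bigr)Z_{0}\,+\sum_{k=\tau_{N(n)}(i)+1}^{n}\Pi_{k-1}B_{k},
$$
the right-hand side equals $|\Pi_{\tau_{N(n)}(i)}|$ times a quantity that depends only on the excursion straddling $n$ and is therefore tight by Lemma~\ref{lem:regenerative process}. Combined with $|\Pi_{\tau_{N(n)}(i)}|\xrightarrow{\Prob_{\pi}}0$ from \eqref{eq:Pi_n to 0} and \eqref{eq:B-excursions}, this reduces the claim to showing $|\Psi_{1:m}^{i}(Z_{0})|\xrightarrow{\Prob_{\pi}}\infty$ as $m\to\infty$.

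For the iid system I would then verify the nondegeneracy condition \eqref{DegBed ordinary} for $(A_{1}^{i},B_{1}^{i})$. If instead $\Prob_{i}(B_{1}^{i}=c(1-A_{1}^{i}))=1$ for some $c\in\R$, then $\sum_{k=1}^{m}\Pi_{\tau_{k-1}(i)}B_{k}^{i}=c(1-\Pi_{\tau_{m}(i)})\to c$ $\Prob_{i}$-a.s., so Theorem~\ref{thm:G/M theorem}(b) holds for $(A_{n}^{i},B_{n}^{i})_{n\ge 1}$, which satisfies \eqref{nonzero} by \eqref{standing assumption}. The implication (b)$\RA$(c) of Theorem~\ref{thm:G/M theorem} then forces $\Erw_{i}J_{i}(\log^{+}|B_{1}^{i}|)<\infty$, contradicting the hypothesis. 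Hence \eqref{DegBed ordinary} holds, and the ``on the other hand'' part of Theorem~\ref{thm:G/M theorem} delivers $|\Psi_{1:m}^{i}(Z_{0})|\xrightarrow{\Prob_{i}}\infty$ for every admissible $Z_{0}$.

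The passage from $\Prob_{i}$ to an arbitrary $\Prob_{j}$, and hence to $\Prob_{\pi}$, proceeds via the strong Markov property at $\tau(i)$: decomposing $\Psi_{1:m}^{i}=\Psi_{1}^{i}\circ\Psi_{2:m}^{i}$, the tail $(\Psi_{n}^{i})_{n\ge 2}$ is iid with the $\Prob_{i}$-law under every $\Prob_{j}$, whence $|\Psi_{2:m}^{i}(Z_{0})|\xrightarrow{\Prob_{j}}\infty$ by the same appeal to Theorem~\ref{thm:G/M theorem}, and the a.s. nondegenerate affine map $\Psi_{1}^{i}$ (with $\Pi_{\tau(i)}\ne 0$ by \eqref{standing assumption}) preserves divergence. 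I expect the principal obstacle to be the excursion-control reduction at the very beginning, since an uncontrolled between-returns excursion could in principle drag $\Psi_{1:n}(Z_{0})$ back to a bounded set; the regenerative tightness provided by Lemma~\ref{lem:regenerative process} combined with \eqref{eq:Pi_n to 0} is precisely what averts this danger.
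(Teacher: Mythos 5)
Your opening reduction correctly shows $|\Psi_{1:n}(Z_{0})-\Psi_{1:N(n)}^{i}(Z_{0})|\xrightarrow{\Prob_{\pi}}0$, your nondegeneracy check for the embedded iid system $(A_{n}^{i},B_{n}^{i})_{n\ge1}$ is fine (note that $\Prob_{i}(B_{1}^{i}=0)<1$ follows from the moment hypothesis itself, not from \eqref{standing assumption}, since $\Erw_{i}J_{i}(\log^{+}|B_{1}^{i}|)=1$ when $B_{1}^{i}=0$ a.s.), and the passage from $\Prob_{i}$ to $\Prob_{j}$ via $\Psi_{1:m}^{i}=\Psi_{1}^{i}\circ\Psi_{2:m}^{i}$ works. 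The genuine gap is the unargued transfer from $|\Psi_{1:m}^{i}(Z_{0})|\xrightarrow{\Prob}\infty$ along the \emph{deterministic} index $m$ to $|\Psi_{1:N(n)}^{i}(Z_{0})|\xrightarrow{\Prob}\infty$ along the \emph{random} index $N(n)$. In this regime Theorem \ref{thm:G/M theorem} delivers divergence only in probability, not almost surely, and convergence in probability does not survive a random time change without an Anscombe-type uniformity over index windows. That uniformity fails here in the worst way: since $\Erw_{i}J_{i}(\log^{+}|B_{1}^{i}|)=\infty$ and $S_{\tau_{n}(i)}\to\infty$ a.s., Erickson's lemma (exactly as in the proof of ``(c)$\RA$(b)'' of Theorem \ref{thm:a.s. convergence}) gives $\limsup_{m}|\Pi_{\tau_{m-1}(i)}B_{m}^{i}|=\infty$ a.s., so consecutive terms of the embedded backward sequence differ by arbitrarily large amounts infinitely often. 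Nor can you condition away the randomness of $N(n)$: the event $\{N(n)=m\}$ and the variable $\Psi_{1:m}^{i}(Z_{0})$ both depend on the first $m$ excursions of the driving chain, so they are not independent, and given the whole chain the conditional probability $\Prob(|\Psi_{1:m}^{i}(Z_{0})|\le x\,|\,(M_{k})_{k\ge0})$ need not be small at the particular $m=N(n)$ selected by the chain even though its expectation tends to zero.

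The paper circumvents this entirely by arguing contrapositively on the full sequence: assuming $\Psi_{1:n}(0)$ does not diverge in $\Prob_{i}$-probability, it extracts a vague subsequential limit $F\ne0$, uses the single decomposition $\Psi_{1:n_{k}}(0)=A_{1}^{i}\Psi_{\tau(i)+1:n_{k}}(0)+B_{1}^{i}$ together with $\Pi_{n}\xrightarrow{\Prob_{\pi}}0$ to show that $F$ conditioned on finiteness solves $Z\eqdist A_{1}^{i}Z+B_{1}^{i}$ with the required independence, and then concludes $\Erw_{i}J_{i}(\log^{+}|B_{1}^{i}|)<\infty$ from Theorem \ref{thm:fixed points IID}. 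To salvage your forward-direction strategy you would have to prove the random-index step (independence considerations only give $\Prob_{\pi}(|\Psi_{1:N(n)}^{i}(Z_{0})|\le x)\to0$ in Ces\`aro mean, which is not enough); otherwise you should switch to the vague-limit device.
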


\begin{proof}
Since $\Pi_{n}\xrightarrow{\Prob_{\pi}}0$ as seen above, we infer $\Pi_{n}Z_{0}\xrightarrow{\Prob_{\pi}}0$ for any admissible $Z_{0}$. In view of \eqref{eq:pi_n(Z_0)-Pi_n(0)}, it therefore suffices to prove $|\Psi_{1:n}(0)|\xrightarrow{\Prob_{\pi}}\infty$. By contraposition, suppose that, for some $i\in\cS$, $\Psi_{1:n}(0)$ does not converge in $\Prob_{i}$-probability to $\infty$ which means that $\Prob_{i}(\Psi_{1:n_{k}}(0)\in\cdot)$ converges vaguely to a nonzero measure $F$ on $\R$ for suitable $n_{1}<n_{2}<\ldots$ We will verify that $\Erw_{i}J_{i}(\log^{+}|B_{1}^{i}|)<\infty$. 

\vspace{.1cm}
Since $F(\R)\le 1$, we can choose a random variable $Z$, independent of all other occurring random variables, such that $\Prob_{i}(Z\in\cdot,|Z|<\infty)=F$. Then we have
$$ \lim_{k\to\infty}\Prob_{i}(x<\Psi_{1:n_{k}}(0)\le y)\ =\ \Prob_{i}(x<Z\le y) $$
for all $x,y\in\cC_{Z}$, $x\le y$, where $\cC_{Z}:=\{x:\Prob_{i}(Z=x)=0\}$. Note that
$$ \Psi_{1:n_{k}}(0)-\Psi_{1:n_{k}-m}(0)\ =\ \Pi_{n_{k}-m}\Psi_{n_{k}-m+1:n_{k}-m}(0)\ \xrightarrow{\Prob_{i}}\ 0 $$
for any $m\ge 0$ because the $\Psi_{n_{k}-m+1:n_{k}-m}(0)$, $k\ge 1$, are iid and $\Pi_{n_{k}-m}\xrightarrow{\Prob_{\pi}}0$. With this at hand, we infer
\begin{align*}
\Prob_{i}&(x<Z\le y)\ =\ \lim_{k\to\infty}\Prob_{i}(x<\Psi_{1:n_{k}}(0)\le y)\\
&=\ \lim_{k\to\infty}\Prob_{i}(x<A_{1}^{i}\,\Psi_{\tau(i)+1:n_{k}}(0)+B_{1}^{i}\le y)\\
&=\ \sum_{m\ge 1}\int\lim_{k\to\infty}\Prob_{i}(x<a\,\Psi_{m+1:n_{k}}(0)+b\le y)\ \Prob_{i}(A_{1}^{i}\in da,B_{1}^{i}\in db,\tau(i)=m)\\
&=\ \sum_{m\ge 1}\int\lim_{k\to\infty}\Prob_{i}(x<a\,\Psi_{1:n_{k}-m}(0)+b\le y)\ \Prob_{i}(A_{1}^{i}\in da,B_{1}^{i}\in db,\tau(i)=m)\\
&=\ \sum_{m\ge 1}\int\lim_{k\to\infty}\Prob_{i}(x<a\,\Psi_{1:n_{k}}(0)+b\le y)\ \Prob_{i}(A_{1}^{i}\in da,B_{1}^{i}\in db,\tau(i)=m)\\
&=\ \sum_{m\ge 1}\int\Prob_{i}(x<aZ+b\le y)\ \Prob_{i}(A_{1}^{i}\in da,B_{1}^{i}\in db,\tau(i)=m)\\
&=\ \Prob_{i}(x<A_{1}^{i}Z+B_{1}^{i}\le y)
\end{align*}
for all $x,y\in\cC_{Z}$ with $x\le y$, in particular $\Prob_{i}(|Z|<\infty)=\Prob_{i}(|A_{1}^{i}Z+B_{1}^{i}|<\infty)$. Therefore, any proper random variable $Z'$ with distribution $\Prob_{i}(Z\in\cdot||Z|<\infty)$ and independent of $(A_{1}^{i},B_{1}^{i})$ satisfies the SFPE \eqref{SFPE2}, i.e.
$$ Z'\ \eqdist\ A_{1}^{i}Z'+B_{1}^{i}. $$
Finally, $\Erw_{i}J_{i}(\log^{+}|B_{1}^{i}|)<\infty$ now follows by invoking Theorem \ref{thm:fixed points IID}.\qed
\end{proof}

The proof of Theorem \ref{thm:weak convergence} under the proviso of this subsection is now completed by the following lemma.

\begin{Lemma}\label{lem:final lemma case 1}
Suppose \eqref{standing assumption} and $\Pi_{\tau_{n}(j)}\to 0$ a.s. for all $j\in\cS$.
Then the following assertions are equivalent for any $i\in\cS$:
\begin{description}[(b)]\itemsep2pt 
\item[(a)] $\Prob_{i}(\Psi_{1:n}(Z_{0})\in\cdot)\weakly Q_{i}$ for some admissible $Z_{0}$ and $Q_{i}\in\cP(\R)$. 
\item[(b)] $\Erw_{i}J_{i}(\log^{+}|B_{1}^{i}|)<\infty$.
\end{description}
Moreover, if (a), (b) do hold for some $i\in\cS$, then $\Psi_{1:n}(Z_{0})\xrightarrow{\Prob_{\pi}}Z_{\infty}$ for any admissible $Z_{0}$ and thus (a), (b) are true for all $i\in\cS$ with $Q_{i}=\Prob_{i}(Z_{\infty}\in\cdot)$.
\end{Lemma}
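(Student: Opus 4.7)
The plan is to prove ``(b)$\Rightarrow$ convergence of $\Psi_{1:n}(Z_0)$ to $Z_\infty$ in $\Prob_\pi$-probability'' first: this simultaneously delivers (a) with $Q_i=\Prob_i(Z_\infty\in\cdot)$ for every $i\in\cS$ and, combined with the preceding lemma, gives solidarity of (b) over $\cS$. The converse direction (a)$\Rightarrow$(b) is then a short Slutsky reduction to the preceding lemma.

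For (b)$\Rightarrow$ convergence I would apply Theorem~\ref{thm:G/M theorem} to the iid embedded affine maps $(\Psi_n^i)_{n\ge 1}$ under $\Prob_i$: its Condition (c) is exactly our (b) together with the standing proviso $\Pi_{\tau_n(i)}=A_1^i\cdots A_n^i\to 0$ a.s., because the Goldie--Maller $J$-function built from $-\log|A_1^i|=S_{\tau(i)}$ equals our $J_i$. Theorem~\ref{thm:G/M theorem}(b) then yields the $\Prob_i$-a.s.\ convergence
$$ \Psi_{1:\tau_n(i)}(0)\ =\ \sum_{k=1}^{\tau_n(i)}\Pi_{k-1}B_k\ \longrightarrow\ \sum_{n\ge 1}\Pi_{\tau_{n-1}(i)}B_n^i $$
along the subsequence of $i$-return times. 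The excursion bound \eqref{eq:B-excursions}, which combines $\Pi_{\tau_{N(n)}(i)}\to 0$ a.s.\ with Lemma~\ref{lem:regenerative process} applied to $\ovl{B}^i_{N(n)+1}$, controls the within-excursion deviation and gives $\Psi_{1:n}(0)-\Psi_{1:\tau_{N(n)}(i)}(0)\xrightarrow{\Prob_\pi}0$. Therefore $\Psi_{1:n}(0)\xrightarrow{\Prob_\pi}Z_\infty$ with $Z_\infty:=\sum_{k\ge 1}\Pi_{k-1}B_k$ (well-defined as this probability limit). Combining with \eqref{eq:Pi_n to 0} via Slutsky yields $\Psi_{1:n}(Z_0)\xrightarrow{\Prob_\pi}Z_\infty$ for every admissible $Z_0$; since $\pi_j>0$ for all $j\in\cS$, this propagates to $\Prob_j$-convergence for each $j$, proving (a) with $Q_j=\Prob_j(Z_\infty\in\cdot)$.

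For (a)$\Rightarrow$(b), given $\Prob_i(\Psi_{1:n}(Z_0)\in\cdot)\weakly Q_i\in\cP(\R)$ for some admissible $Z_0$, I would use $\Pi_n\xrightarrow{\Prob_i}0$ (a consequence of \eqref{eq:Pi_n to 0} and $\pi_i>0$) together with the a.s.\ finiteness of $Z_0$ to obtain $\Pi_nZ_0\xrightarrow{\Prob_i}0$; Slutsky then delivers $\Psi_{1:n}(0)\weakly Q_i$ under $\Prob_i$, which is a nonzero vague subsequential limit on $\R$, and the argument of the preceding lemma (phrased entirely under $\Prob_i$ and requiring exactly such a limit) produces $\Erw_iJ_i(\log^+|B_1^i|)<\infty$. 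Once (b) is established at this single $i$, replaying the first direction gives $\Psi_{1:n}(0)\xrightarrow{\Prob_\pi}Z_\infty$, hence weak convergence to $\Prob_j(Z_\infty\in\cdot)$ under every $\Prob_j$, and re-applying the preceding lemma at each $j$ yields $\Erw_jJ_j(\log^+|B_1^j|)<\infty$ for all $j\in\cS$. The main technical point, and the only substantive obstacle, is the bridging between subsequential a.s.\ convergence at the return times and full-sequence convergence in probability, which rests squarely on the regenerative tools \eqref{eq:B-excursions} and Lemma~\ref{lem:regenerative process}.
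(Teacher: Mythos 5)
Your overall strategy coincides with the paper's: reduce to $\Psi_{1:n}(0)$ via $\Pi_{n}Z_{0}\to 0$, apply the Goldie--Maller theorem to the embedded iid maps $(\Psi_{n}^{i})_{n\ge 1}$, bridge the within-excursion fluctuations with \eqref{eq:B-excursions}, and obtain (a)$\Rightarrow$(b) from the preceding lemma. The one substantive gap is that you apply Theorem \ref{thm:G/M theorem} to the embedded system without checking its standing hypothesis \eqref{nonzero} \emph{for that system}: it requires $\Prob_{i}(B_{1}^{i}=0)<1$, and this can fail even though $\Prob_{\pi}(B=0)<1$ --- Example \ref{exa:B_1^s>0} exhibits exactly such a situation, so the case is not vacuous. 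The paper treats it separately: $\Prob_{i}(B_{1}^{i}=0)=1$ forces the degeneracy condition \eqref{pi-degenerate} by Lemma \ref{lem:B_1^s>0}, whence $\Psi_{1:n}(0)=c_{M_{0}}-\Pi_{n}c_{M_{n}}$ a.s., and $\Pi_{n}\xrightarrow{\Prob_{\pi}}0$ together with stationarity of $(c_{M_{n}})_{n\ge 0}$ gives $\Psi_{1:n}(0)\xrightarrow{\Prob_{\pi}}c_{M_{0}}$. Your argument can be patched more cheaply --- when $B_{1}^{i}=0$ $\Prob_{i}$-a.s.\ the embedded perpetuity is identically zero, so the a.s.\ convergence along return times is trivial and only the excursion bound is needed --- but the case must be acknowledged, since you are otherwise invoking the implication (c)$\Rightarrow$(b) of Theorem \ref{thm:G/M theorem} outside its stated hypotheses.

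A smaller bookkeeping point: the a.s.\ convergence along return times that Goldie--Maller delivers is under $\Prob_{i}$ (zero-delayed embedded walk), whereas \eqref{eq:B-excursions} is formulated under $\Prob_{\pi}$; to run the whole chain under $\Prob_{\pi}$ you must also account for the delayed first block $\sum_{k=1}^{\tau(i)}\Pi_{k-1}B_{k}$, which the paper handles in its final display by decomposing $\Psi_{1:n}(0)$ at $\tau(i)$ and using the independence of $\Psi_{\tau(i)+1:n}(0)$ from $(\tau(i),A_{1}^{i},B_{1}^{i})$. Conversely, your transfer from $\Prob_{\pi}$-convergence to $\Prob_{j}$-convergence via $\Prob_{j}\le\pi_{j}^{-1}\Prob_{\pi}$ is correct and slightly slicker than the paper's direction of transfer ($\Prob_{i}$ to $\Prob_{\pi}$). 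Your (a)$\Rightarrow$(b) direction is exactly the paper's, being the contrapositive of the preceding lemma.
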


\begin{proof}
Since ``(a)$\RA$(b)'' is immediate by the previous lemma, we turn directly to the proof of ``(b)$\RA$(a)''. As argued above, it suffices to consider $\Psi_{1:n}(0)$.

\vspace{.1cm}
If $\Prob_{i}(B_{1}^{i}=0)=1$, then \eqref{pi-degenerate} holds by Lemma \ref{lem:B_1^s>0} and so
$$ \Psi_{1:n}(0)\ =\ c_{M_{0}}-\Pi_{n}c_{M_{n}}\quad\text{a.s.} $$
for all $n\ge 1$. Since $(c_{M_{n}})_{n\ge 0}$ is stationary, $\Pi_{n}\xrightarrow{\Prob_{\pi}}0$ implies $\Pi_{n}c_{M_{n}}\xrightarrow{\Prob_{\pi}}0$ and therefore $\Psi_{1:n}(0)\xrightarrow{\Prob_{\pi}}c_{M_{0}}$.

\vspace{.1cm}
If $\Prob_{i}(B_{1}^{i}=0)<1$, then Theorem \ref{thm:G/M theorem} provides us with the $\Prob_{i}$-a.s. convergence of $\Psi_{1:n}^{i}(0)$ and particularly of $\Psi_{1:N(n)}(0)$ to $Z_{\infty}$.
By \eqref{eq:B-excursions} and
$$ \Psi_{1:n}(0)\ =\ \Psi_{1:N(n)}^{i}(0)\ +\ \sum_{k=\tau_{N(n)}+1}^{n}\Pi_{k-1}B_{k}, $$
we then obtain the $\Psi_{1:n}(0)\xrightarrow{\Prob_{i}}Z_{\infty}$. Finally, the same holds true under $\Prob_{\pi}$ because
$$ \Psi_{1:n}(0)\ =\ \1_{\{\tau(i)<n\}}\left(A_{1}^{i}\Psi_{\tau(i)+1:n}(0)+B_{1}^{i}\right)\ +\ \1_{\{\tau\ge n\}}\Psi_{1:n}(0), $$
$\Psi_{\tau(i)+1:n}(0)$ and $(\tau(i),A_{1}^{i},B_{1}^{i})$ are independent under $\Prob_{\pi}$, and
$$ \Prob_{\pi}(\Psi_{\tau(i)+1:n}(0)\in\cdot)\ =\ \Prob_{i}(\Psi_{1:n}(0)\in\cdot).\qquad\qed $$
\end{proof}

\subsection{The case $\Prob_{i}(|A_{1}^{i}|=1)=1$}\label{subsec:cond (b)}

If $\Prob_{i}(|A_{1}^{i}|=1)=1$ for some/all $i\in\cS$, then the MRW $(M_{n},S_{n})_{n\ge 0}$ with $S_{n}=-\log|\Pi_{n}|$ is null-homologous (see Subsection \ref{subsec:fluctuation}), thus
$$ S_{n}\ =\ g(M_{n})-g(M_{0})\quad\text{a.s.} $$
for all $n\ge 0$ and a function $g:\cS\to\R$. Putting $a_{i}:=e^{g(i)}$ for $i\in\cS$, this yields $|\Pi_{n}|=a_{M_{0}}/a_{M_{n}}$ a.s. for all $n\ge 0$, in particular the tightness of $(\Pi_{n})_{n\ge 0}$ and thus of $(\Pi_{n}Z_{0})_{n\ge 0}$ for any admissible $Z_{0}$. As a consequence (see \eqref{eq:pi_n(Z_0)-Pi_n(0)}), $|\Psi_{1:n}(Z_{0})|\xrightarrow{\Prob_{\pi}}\infty$ iff $|\Psi_{1:n}(0)|\xrightarrow{\Prob_{\pi}}\infty$, a fact to be used Lemma \ref{lem:Psi_1:n(0) to infty in (b)} below.

\vspace{.1cm}
We further point out beforehand that $(A_{n},B_{n})_{n\ge 1}$ is also Markov-modulated with respect to the augmented and still positive recurrent Markov chain $(\wh{M}_{n})_{n\ge 0}$ on $\cS\times\{-1,+1\}$, defined by
$$ \wh{M}_{n}\ :=\ (M_{n},\sign(\Pi_{n})). $$
Let $(\wh{\tau}_{n}(i))_{n\ge 1}$ be the subsequence of $(\tau_{n}(i))_{n\ge 0}$ defined by the successive epochs $k$ at which $M_{k}=i$ and $\Pi_{k}=1$, thus $\wh{\tau}_{n}(i)=\tau_{\rho(n)}(i)$ for a renewal stopping sequence $\rho(1),\rho(2),...$ with increment distribution under $\Prob_{i}$ given by
\begin{align}
\begin{split}\label{eq:rho(1) aperiodic}
\Prob_{i}(\rho(1)=n)\ =\ 
\begin{cases}
\hfill\Prob_{i}(A_{1}^{i}=1),&\text{if }n=1,\\
\Prob_{i}(A_{1}^{i}=-1)^{2}\,\Prob_{i}(A_{1}=1)^{n-2},&\text{if }n\ge 2.
\end{cases}
\end{split}
\end{align}
Since $\Erw_{i}\wh{\tau}(i)=\Erw_{i}\tau(i)\,\Erw_{i}\rho(1)<\infty$ by Wald's identity, we see that the augmented chain is indeed positive recurrent. Furthermore, Lemma \ref{lem:lattice-type whtau(i)} below will show that it is at most 2-periodic and that period 2 occurs iff $\Prob_{i}(A_{1}^{i}=-1)=1$. In the aperiodic case, i.e., when $\wh{\tau}(i)$ is aperiodic for some and then (by solidarity) all $i\in\cS$, the ergodic theorem for Markov chains provides us with
\begin{equation}\label{eq:erdodic thm M_n^*}
\lim_{n\to\infty}\Prob_{i}(M_{n}=j,\sign(\Pi_{n})=\pm 1)\ =\ \frac{1}{2}\,\pi_{j}
\end{equation}
for all $i,j\in\cS$.

\vspace{.1cm}
Defining $(\wh{A}_{n}^{i},\wh{B}_{n}^{i})$ in the same manner as $(A_{n}^{i},B_{n}^{i})$, but for the $\wh{\tau}_{n}(i)$, the assumption $\Prob_{i}(|A_{1}^{i}|=1)=1$ implies $\Prob_{i}(\wh{A}_{1}^{i}=1)=1$. Regarding $\wh{B}_{1}^{i}$, we have:

\begin{Lemma}\label{lem:degeneracy (B2)}
If $\Prob_{i}(|A_{1}^{i}|=1)=1$, then \eqref{pi-degenerate} and $\Prob_{i}(\wh{B}_{1}^{i}=0)=1$ are equivalent assertions.
\end{Lemma}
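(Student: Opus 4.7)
The implication ``\eqref{pi-degenerate} $\Rightarrow$ $\Prob_i(\wh{B}_1^i = 0) = 1$'' should be the easy direction. The plan is to invoke Proposition~\ref{prop:degeneracy consequences} or~\ref{prop2:degeneracy consequences} to pass from \eqref{pi-degenerate} to the homology identity \eqref{eq:general homology}, and then to evaluate that identity at the random time $n = \wh{\tau}(i)$ under $\Prob_i$. Since by the very definition of $\wh{\tau}(i)$ one has $M_{\wh{\tau}(i)} = i$ and $\Pi_{\wh{\tau}(i)} = 1$, the identity $\Pi_n c_{M_n} + \sum_{k=1}^n \Pi_{k-1}B_k = c_{M_0}$, valid $\Prob_i$-a.s.\ for each fixed $n$, collapses on $\{\wh{\tau}(i) = n\}$ to $c_i + \wh{B}_1^i = c_i$. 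Summing over $n$ and using that $\wh{\tau}(i)$ is $\Prob_i$-a.s.\ finite yields $\wh{B}_1^i = 0$ $\Prob_i$-a.s.

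For the converse I would reduce via Lemma~\ref{lem:solidarity}: it suffices to exhibit a single $c_i \in \R$ with $\Prob_i(A_1^i c_i + B_1^i = c_i) = 1$, because this produces \eqref{degenerate} and then Propositions~\ref{prop:degeneracy consequences}--\ref{prop2:degeneracy consequences} upgrade it to \eqref{pi-degenerate}. Setting $p := \Prob_i(A_1^i = 1)$ and recalling the distribution of $\rho(1)$ from \eqref{eq:rho(1) aperiodic}, I would examine the two smallest values of $\rho(1)$. On $\{\rho(1) = 1\} = \{A_1^i = 1\}$ one has $\wh{B}_1^i = B_1^i$, so whenever $p > 0$ this forces $B_1^i = 0$ $\Prob_i$-a.s.\ on $\{A_1^i = 1\}$. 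On $\{\rho(1) = 2\} = \{A_1^i = A_2^i = -1\}$ one has $\wh{B}_1^i = B_1^i + \Pi_{\tau(i)} B_2^i = B_1^i - B_2^i$; since $(A_n^i, B_n^i)_{n \ge 1}$ is iid under $\Prob_i$, conditioning on $A_1^i = A_2^i = -1$ produces two conditionally iid copies of $B_1^i$ whose difference vanishes a.s., which forces the conditional law of $B_1^i$ given $A_1^i = -1$ to be degenerate at some value $b_- \in \R$.

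It then remains to set $c_i := b_-/2$, with the convention $b_- := 0$ in the subcase $\Prob_i(A_1^i = -1) = 0$, and to verify directly on each of $\{A_1^i = 1\}$ and $\{A_1^i = -1\}$ that $A_1^i c_i + B_1^i = c_i$; this gives $\Prob_i(A_1^i c_i + B_1^i = c_i) = 1$ and completes the reduction via Lemma~\ref{lem:solidarity}. The delicate part is precisely this converse direction: the identity $\wh{B}_1^i = 0$ constrains only an aggregate over a random number $\rho(1)$ of $i$-regeneration blocks, and extracting pointwise information about the one-step increment $(A_1^i, B_1^i)$ hinges on the iid structure of $(A_n^i, B_n^i)_{n \ge 1}$ under $\Prob_i$ together with the positivity of whichever of $\{\rho(1) = 1\}$ and $\{\rho(1) = 2\}$ is not vacuous in the relevant boundary subcase.
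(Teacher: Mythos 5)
Your proof is correct, and the converse direction takes a genuinely different route from the paper. The forward implication is essentially the paper's argument in disguise: the paper telescopes $\wh{B}_{1}^{i}=c_{i}\sum_{k=1}^{\rho(1)}\Pi_{\tau_{k-1}(i)}(1-A_{k}^{i})=c_{i}(1-\wh{A}_{1}^{i})=0$ using $B_{k}^{i}=c_{i}(1-A_{k}^{i})$, which is the same computation as evaluating \eqref{eq:general homology} at $n=\wh{\tau}(i)$. For the converse, the paper observes that $(\wh{M}_{\tau_{n}(i)},\sum_{k=1}^{n}\Pi_{\tau_{k-1}(i)}B_{k}^{i})_{n\ge 0}$ is an MRW on the sign-augmented chain whose increments over returns to $(i,1)$ vanish, invokes the trichotomy of Proposition \ref{prop:trichotomy MRW} to conclude it is null-homologous, and reads off $B_{1}^{i}=g_{i}(A_{1}^{i})-g_{i}(1)$, hence $B_{1}^{i}=c_{i}(1-A_{1}^{i})$ with $c_{i}=(g_{i}(-1)-g_{i}(1))/2$. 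You instead extract the same pointwise identity directly from the events $\{\rho(1)=1\}=\{A_{1}^{i}=1\}$ and $\{\rho(1)=2\}=\{A_{1}^{i}=A_{2}^{i}=-1\}$: the first forces $B_{1}^{i}=0$ on $\{A_{1}^{i}=1\}$, and the second, via the iid structure of the blocks, makes $B_{1}^{i}-B_{2}^{i}$ a symmetrization of the conditional law given $A_{1}^{i}=-1$ that vanishes a.s., hence that law is a point mass $b_{-}$; your choice $c_{i}=b_{-}/2$ then verifies $A_{1}^{i}c_{i}+B_{1}^{i}=c_{i}$ on both events, and Lemma \ref{lem:solidarity} finishes as in the paper. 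Your argument is more elementary and self-contained (it bypasses the null-homology machinery entirely and correctly notes that the first two block lengths already pin down the law of $(A_{1}^{i},B_{1}^{i})$ on $\{A_{1}^{i}=\pm 1\}$), at the cost of an explicit case split on which of the two events is non-null; the paper's version is shorter given that Proposition \ref{prop:trichotomy MRW} is already in place and treats all block lengths uniformly.
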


\begin{proof}
Fix any $i\in\cS$. If \eqref{pi-degenerate} holds, then
$$ \wh{B}_{1}^{i}\ =\ \sum_{k=1}^{\rho(1)}\Pi_{\tau_{k-1}(i)}B_{k}^{i}\ =\ c_{i}\sum_{k=1}^{\rho(1)}\Pi_{\tau_{k-1}(i)}(1-A_{k}^{i})\ =\ 1-\wh{A}_{1}^{i}\ =\ 0\quad\Prob_{i}\text{-a.s.} $$
as claimed.

\vspace{.1cm}
Conversely, if $\Prob_{i}(\wh{B}_{1}^{i}=0)=1$, then the MRW $(\wh{M}_{\tau_{n}(i)},\sum_{k=1}^{n}\Pi_{\tau_{k-1}(i)}B_{k}^{i})_{n\ge 0}$ is trivial under $\Prob_{i}$ along its embedded sequence $(\rho(n))_{n\ge 1}$ where $(\wh{M}_{\tau_{n}(i)})_{n\ge 0}$ returns to state $(i,1)$. By Proposition \ref{prop:trichotomy MRW} and the subsequent remarks, it is therefore null-homologous, i.e.
\begin{align*}
B_{1}^{i}\ &=\ g_{i}(A_{1}^{i})-g_{i}(1)\ =\ (g_{i}(-1)-g_{i}(1))\,\1_{\{A_{1}^{i}=-1\}}\quad\Prob_{i}\text{-a.s.}
\end{align*}
for a suitable $g_{i}:\{\pm 1\}\to\R$. Consequently, $B_{1}^{i}=c_{i}(1-A_{1}^{i})$ $\Prob_{i}$-a.s. with $c_{i}:=(g_{i}(-1)-g_{i}(1))/2$. Now use Lemma \ref{lem:solidarity} and Proposition \ref{prop2:degeneracy consequences} to infer \eqref{degenerate} and then \eqref{pi-degenerate}.\qed
\end{proof}

\begin{Lemma}\label{lem:Psi_1:n(0) to infty in (b)}
Suppose that $\Prob_{i}(|A_{1}^{i}|=1)=1$. Then $|\Psi_{1:n}(Z_{0})|\xrightarrow{\Prob_{\pi}}\infty$ for any admissible $Z_{0}$ iff\eqref{pi-degenerate} fails to hold.
\end{Lemma}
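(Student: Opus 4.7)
The plan splits along the two directions of the equivalence: the ``if'' direction is read off directly from the degeneracy analysis of Section~\ref{sec:degeneracy}, while the ``only if'' direction reduces to the iid case via the excursion chain embedded at the return times to a fixed state $i\in\cS$.

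For the ``if'' direction, assume \eqref{pi-degenerate} holds. Propositions~\ref{prop:degeneracy consequences} and \ref{prop2:degeneracy consequences} provide the conjugation identity \eqref{eq:general homology}, which for admissible $Z_0$ rewrites as
$$\Psi_{1:n}(Z_0)\ =\ c_{M_0}\ +\ \Pi_n(Z_0-c_{M_n})\quad\text{a.s.}$$
Under the standing assumption $\Prob_i(|A_1^i|=1)=1$, the MRW $(M_n,-\log|\Pi_n|)_{n\ge 0}$ is null-homologous, so $|\Pi_n|=a_{M_0}/a_{M_n}$ a.s.\ (as recalled in Subsection~\ref{subsec:cond (b)}). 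Stationarity of $(M_n)$ under $\Prob_\pi$ then gives tightness of $|\Pi_n|$ and, together with admissibility of $Z_0$ and the stationarity of $(c_{M_n})$, tightness of the product $\Pi_n(Z_0-c_{M_n})$. Hence $|\Psi_{1:n}(Z_0)|$ cannot diverge to $\infty$ in $\Prob_\pi$-probability.

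For the converse, suppose \eqref{pi-degenerate} fails. By the tightness of $\Pi_n Z_0$ noted just before the lemma, it suffices to prove $|\Psi_{1:n}(0)|\xrightarrow{\Prob_\pi}\infty$. Lemma~\ref{lem:solidarity} combined with Propositions~\ref{prop:degeneracy consequences} and \ref{prop2:degeneracy consequences} implies that failure of \eqref{pi-degenerate} forces failure of \eqref{degenerate}, i.e.\ $\Prob_i(B_1^i=c(1-A_1^i))<1$ for every $i\in\cS$ and every $c\in\R$, which is precisely the iid nondegeneracy \eqref{DegBed ordinary} for the pair $(A_1^i,B_1^i)$ under $\Prob_i$. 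Since $|A_1^i|=1$ entails $|\Pi_{\tau_n(i)}|=1$, conditions (a)--(f) of Theorem~\ref{thm:G/M theorem} all fail for the iid system $(\Psi_k^i)_{k\ge 1}$, so its final assertion yields
$$|\Psi_{1:\tau_n(i)}(0)|\ =\ |\Psi_{1:n}^i(0)|\ \xrightarrow{\Prob_i}\ \infty.$$

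It remains to lift this deterministic-subsequence divergence to the full sequence. With $N(n):=\sup\{k:\tau_k(i)\le n\}$, write
$$\Psi_{1:n}(0)\ =\ \Psi_{1:\tau_{N(n)}(i)}(0)\ +\ \Pi_{\tau_{N(n)}(i)}\,R_n',$$
where $R_n'$ is the partial sum in the current, incomplete excursion. Under $\Prob_i$, $|\Pi_{\tau_{N(n)}(i)}|=1$ a.s., and $|R_n'|\le H_{N(n)+1}$, where $H_k$ is the maximal absolute partial sum during the $k$-th excursion; under $\Prob_i$ the $H_k$ are iid copies of an a.s.\ finite random variable $H$. A strong Markov decomposition at $\tau_k(i)$ followed by the aperiodic discrete renewal theorem (using $\Erw_i\tau(i)<\infty$) gives
$$\limsup_{n\to\infty}\Prob_i(|R_n'|>L)\ \le\ \frac{\Erw_i\bigl[\tau(i)\,\1_{\{H>L\}}\bigr]}{\Erw_i\tau(i)}\ \xrightarrow{L\to\infty}\ 0$$
by dominated convergence, establishing tightness of $(R_n')_{n\ge 1}$ under $\Prob_i$. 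An entirely analogous renewal-type identity lifts $|\Psi_{1:\tau_n(i)}(0)|\xrightarrow{\Prob_i}\infty$ to $|\Psi_{1:\tau_{N(n)}(i)}(0)|\xrightarrow{\Prob_i}\infty$, and combining the two yields $|\Psi_{1:n}(0)|\xrightarrow{\Prob_i}\infty$ for each $i\in\cS$. Averaging via $\Prob_\pi=\sum_i\pi_i\Prob_i$ and dominated convergence then gives the conclusion under $\Prob_\pi$.

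The main obstacle is this last passage from divergence along the deterministic subsequence $(\tau_n(i))$ to divergence at the random index $\tau_{N(n)}(i)$: a priori the random index could sample the iterates at atypically small values, and one must invoke the strong Markov structure together with the aperiodic renewal theorem to rule this out while simultaneously controlling the within-excursion remainder $R_n'$.
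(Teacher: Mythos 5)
Your ``if'' direction and the reduction of the ``only if'' direction to the iid theory are both sound: writing $\Psi_{1:n}(Z_0)=c_{M_0}+\Pi_n(Z_0-c_{M_n})$ under \eqref{pi-degenerate} and using tightness of $|\Pi_n|=a_{M_0}/a_{M_n}$ is correct, and deducing $|\Psi^i_{1:n}(0)|\xrightarrow{\Prob_i}\infty$ from Lemma~\ref{lem:solidarity} together with the final assertion of Theorem~\ref{thm:G/M theorem} is a legitimate variant of the paper's route (the paper instead passes to the finer subsequence $\wh\tau_n(i)$, along which the backward iterates become an ordinary nondegenerate random walk $\sum_{k}\wh B_k^i$). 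The decomposition into the value at $\tau_{N(n)}(i)$ plus a within-excursion remainder, and the tightness of that remainder by size-biased sampling, also match the paper (Lemma~\ref{lem:regenerative process}).

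The gap is exactly the step you flag as ``the main obstacle'' and then dispose of with ``the strong Markov structure together with the aperiodic renewal theorem''. The renewal theorem controls the one-excursion functional $R_n'$, but it does not yield $|\Psi_{1:\tau_{N(n)}(i)}(0)|\xrightarrow{\Prob_i}\infty$. Writing out the ``analogous renewal-type identity'',
\[
\Prob_i\big(|\Psi_{1:\tau_{N(n)}(i)}(0)|\le x\big)\;=\;\sum_{m=0}^{n}\Prob_i\big(m\in\{\tau_k(i):k\ge0\},\,|\Psi_{1:m}(0)|\le x\big)\,\Prob_i(\tau(i)>n-m),
\]
one sees that to make the right-hand side vanish one needs smallness of $\Prob_i(m\text{ is a return epoch},\,|\Psi_{1:m}(0)|\le x)$ as the deterministic \emph{time} $m\to\infty$ --- essentially the assertion being proved --- whereas Goldie--Maller only gives smallness of $\eps_k:=\Prob_i(|\Psi_{1:\tau_k(i)}(0)|\le x)$ as the excursion \emph{count} $k\to\infty$; the crude bound $\sum_k\Prob_i(N(n)=k,\,|\Psi_{1:\tau_k(i)}(0)|\le x)\le\sum_k\eps_k$ is useless because the $\eps_k$ need not be summable. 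Divergence in probability along a deterministic subsequence does not transfer to the $M$-measurable random index $N(n)$ without a bound that is uniform over where the index may land. The paper supplies precisely this via Lemma~\ref{lem:concentration MRW}: it regards $\big(\sum_{k\le\wh N(n)}\wh B_k^i\big)_{n\ge0}$ as an MRW modulated by the residual-lifetime chain and proves $|S_n|\xrightarrow{\Prob_\pi}\infty$ using the Kolmogorov--Rogozin concentration-function inequality \emph{conditionally on the driving chain} (plus the ergodic theorem); conditioning on the chain freezes $N(n)$ while keeping the increments independent, which is what makes the random index harmless. (Case 1 of that lemma covers separately the situation where the increments are deterministic given the chain's endpoints.) Your argument needs this lemma, or an equivalent conditional concentration estimate, to close.
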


\begin{proof}
As noted above, it suffices to consider $\Psi_{1:n}(0)$. Moreover, $|\Psi_{1:n}(0)|\xrightarrow{\Prob_{\pi}}\infty$ a.s. and
$$ |\Psi_{1:n}(0)|\ \xrightarrow{\Prob_{i}}\ \infty\quad\text{for some }i\in\cS $$
are equivalent assertions. Just note that $\Psi_{1:n}(0)=A_{1}^{i}\Psi_{\tau(i)+1:n}(0)+B_{1}^{i}$ a.s. on $\{\tau(i)<n\}$. Fix now any $i\in\cS$. Then
$$ \Psi_{1:\wh{\tau}_{n}(i)}(0)\ =\ \Psi_{1:\rho(n)}^{i}(0)\ =\ \sum_{k=1}^{n}\wh{B}_{k}^{i},\quad n\ge 0 $$
defines an ordinary RW under $\Prob_{i}$. By Lemma \ref{lem:degeneracy (B2)}, it is nontrivial and therefore satisfying $|\Psi_{1:\rho(n)}^{i}(0)|\xrightarrow{\Prob_{i}}\infty$
iff \eqref{pi-degenerate} fails. So it is enough to argue that $|\Psi_{1:\rho(n)}(0)|\xrightarrow{\Prob_{i}}\infty$ entails $|\Psi_{1:n}(0)|\xrightarrow{\Prob_{i}}\infty$.

\vspace{.1cm}
To this end, observe that, for all $n\ge 0$,
\begin{align}\label{eq:inequality Psi_1:n(0)}
|\Psi_{1:\rho(\wh{N}(n))}^{i}(0)|\,-\,B_{\wh{N}(n)+1}^{*i}\ \le\ |\Psi_{1:n}(0)|\ \le\ |\Psi_{1:\rho(\wh{N}(n))}^{i}(0)|\,+\,B_{\wh{N}(n)+1}^{*i}\quad\Prob_{i}\text{-a.s.},
\end{align}
where $\wh{N}(n):=\sup\{k\ge 0:\tau_{\rho(k)}(i)\le n\}=\sup\{k\ge 0:\wh{\tau}_{k}(i)\le n\}$ for $n\ge 0$ and
$$ B_{n}^{*i}\ :=\ \max_{\wh{\tau}_{n-1}(i)<k\le\wh{\tau}_{n}(i)}\left|\sum_{l=1}^{k}\Pi_{l-1}B_{l}\right|\ =\ \max_{\wh{\tau}_{n-1}(i)<k\le\wh{\tau}_{n}(i)}\left|\sum_{l=1}^{k}\left(\prod_{m=\wh{\tau}_{n-1}(i)+1}^{l-1}A_{m}\right)B_{l}\right| $$
for $n\ge 1$. As $(\wh{\tau}_{n}(i))_{n\ge 0}$ is an integrable subsequence of $(\tau_{n}(i))_{n\ge 0}$ with iid increments under $\Prob_{i}$, Lemma \ref{lem:regenerative process} in the Appendix ensures that $B_{\wh{N}(n)+1}^{*i}$ converges in distribution under $\Prob_{i}$. Furthermore, Lemma \ref{lem:concentration MRW} from there provides us with $|\Psi_{1:\rho(\wh{N}(n))}^{i}(0)|\xrightarrow{\Prob_{i}}\infty$ when noting that
$$ \left(\tau_{\rho(\wh{N}(n)+1)}-n,\,\Psi_{1:\rho(\wh{N}(n))}^{i}(0)\right)_{n\ge 0}\ =\ \left(\tau_{\rho(\wh{N}(n)+1)}-n,\,\sum_{k=1}^{\wh{N}(n)}\wh{B}_{k}^{i}\right)_{n\ge 0} $$
constitutes a MRW which has positive recurrent driving chain and is not null-homologous.
The latter holds because the embedded ordinary RW $(\Psi_{1:\rho(n)}^{i})_{n\ge 0}$ obtained at the return times of the driving chain to 0 is nontrivial as stated above. Using these facts in \eqref{eq:inequality Psi_1:n(0)}, we finally conclude $|\Psi_{1:n}(0)|\xrightarrow{\Prob_{i}}\infty$.\qed
\end{proof}

\begin{Lemma}\label{lem:lattice-type whtau(i)}
Suppose that $\Prob_{i}(|A_{1}^{i}|=1)=1$. Then $\wh{\tau}(i)$ is either aperiodic or 2-periodic under $\Prob_{i}$, where the second alternative occurs iff $\Prob_{i}(A_{1}^{i}=-1)=1$.
\end{Lemma}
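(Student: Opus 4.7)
The plan is to leverage the augmented Markov chain $(\wh M_n)_{n\ge 0}:=(M_n,\sign(\Pi_n))$ on $\cS\times\{\pm 1\}$, introduced just before the lemma, for which $\wh\tau(i)$ is precisely the first return time to state $(i,+1)$ starting from $(i,+1)$. Restricted to the communicating class of $(i,+1)$, this chain is irreducible and positive recurrent, so standard Markov chain theory identifies its period at $(i,+1)$ with $d:=\gcd\{n\ge 1:\Prob_i(\wh\tau(i)=n)>0\}$.

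To establish $d\in\{1,2\}$, I will show that $2n\in\mathrm{supp}(\wh\tau(i))$ for every $n\in\mathrm{supp}(\tau(i))$ under $\Prob_i$. Concatenating two independent realizations of an $n$-step excursion from $i$ back to $i$ multiplies the running sign by $\sign(\Pi_n)^2=1$, returning the augmented chain to $(i,+1)$ in exactly $2n$ steps. Hence $d\mid 2n$ for every such $n$, and the aperiodicity of $(M_n)$, i.e.\ $\gcd(\mathrm{supp}(\tau(i)))=1$, yields $d\mid 2$ and thereby the dichotomy ``aperiodic or 2-periodic''.

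Next I turn to the characterization of 2-periodicity. The direction $\Prob_i(A_1^i=-1)=1\Rightarrow\wh\tau(i)$ is 2-periodic is the easier one: in this case $\rho(1)=2$ $\Prob_i$-a.s.\ by \eqref{eq:rho(1) aperiodic}, so $\wh\tau(i)=T_1+T_2$ with $T_k$ iid copies of $\tau(i)$, and the preceding argument applied to $\tau(i)$ yields $d\le 2$; a direct inspection then identifies $d=2$. For the converse, assume $\Prob_i(A_1^i=+1)>0$. Then both $\rho(1)=1$ (contributing $S_+:=\mathrm{supp}(\tau(i)\mid A_1^i=+1)$) and $\rho(1)\ge 2$ (contributing $S_-+S_-$ and sums such as $S_-+S_++S_-$, where $S_-:=\mathrm{supp}(\tau(i)\mid A_1^i=-1)$) occur with positive $\Prob_i$-probability; the plan is to use the aperiodicity relation $\gcd(S_+\cup S_-)=1$ together with these rich contributions to force $d=1$.

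The main obstacle will be making this last step rigorous: one must preclude any conspiracy among $S_+$, $S_-+S_-$, $S_-+S_++S_-,\ldots$ and their pairwise differences that would leave them sharing a common factor greater than $1$. The natural device is the representation $\wh\tau(i)=\sum_{k=1}^{\rho(1)}T_k$ with $(A_k^i,T_k)_{k\ge 1}$ iid under $\Prob_i$ of common law $(A_1^i,\tau(i))$, combined with the freedom that $\rho(1)$ takes every value in $\{1,2,3,\ldots\}$ with positive $\Prob_i$-probability whenever $\Prob_i(A_1^i=+1)\in(0,1)$. Forming differences between consecutive-$\rho(1)$ contributions produces arbitrary elements of $S_+$ inside $\mathrm{supp}(\wh\tau(i))-\mathrm{supp}(\wh\tau(i))$, which together with the $\rho(1)=2$ contribution should suffice to extract $\gcd=1$ from the aperiodicity of $\tau(i)$.
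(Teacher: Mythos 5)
Your opening step---identifying the period $d$ of $\wh\tau(i)$ with the period of the state $(i,+1)$ for the augmented chain $(\wh M_n)_{n\ge 0}$, and showing $d\mid 2n$ for every $n$ with $\Prob_i(M_n=i)>0$ by concatenating two independent $n$-step excursions---is sound (one should either restrict to the event that both excursions realize the \emph{same} sign, or bound $\Prob_i(\wh M_{2n}=(i,1))$ from below by $\sum_{\eps\in\{\pm1\}}\Prob_i(\wh M_n=(i,\eps))^2>0$), and it is a genuinely different route from the paper, which computes with the Fourier transforms $\wh\phi,\phi,\phi_\pm$ of $\wh\tau(i)$, $\tau(i)$ and the conditional laws of $\tau(i)$ given $A_1^i=\pm1$. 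Your argument yields the dichotomy $d\in\{1,2\}$ in one clean stroke, whereas the paper only extracts it through its case analysis.

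The genuine gap is the sentence ``a direct inspection then identifies $d=2$'', and it cannot be closed: the implication $\Prob_i(A_1^i=-1)=1\Rightarrow d=2$ is false. Take a driving chain for which $\tau(i)$ has support $\{2,3\}$ (say $i\to j\to i$ and $i\to k\to l\to i$) and let every edge leaving $i$ carry $A=-1$ and all other edges $A=+1$; then $A_1^i=-1$ $\Prob_i$-a.s., $\wh\tau(i)=\tau_2(i)$ has support $\{4,5,6\}$, and $d=1$. Your plan for the remaining case $0<\Prob_i(A_1^i=1)<1$ also cannot succeed: putting $\tau(i)=2$ on the $+1$-excursion and $\tau(i)=3$ on the $-1$-excursion gives $\mathrm{supp}(\wh\tau(i))=\{2\}\cup\{6,8,10,\ldots\}\subset 2\N$, hence $d=2$ although $\Prob_i(A_1^i=-1)<1$. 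You should know that the paper's own proof breaks at exactly these two points: in its case (b) it passes from $\phi(2\pi/d)^2=1$ to $\phi(2\pi/d)=-1$ without excluding the root $+1$ (which corresponds to $d=1$), and in its case (c) it infers $\phi_-(2\pi/d)=1$ from $\phi_-(2\pi/d)^2=1$. The correct characterization is that $d=2$ iff $A_1^i=(-1)^{\tau(i)}$ $\Prob_i$-a.s., i.e.\ iff the length of an $i$-excursion and the sign it produces are a.s.\ linked in parity (the ``if'' is immediate from $\prod_{k\le\rho(1)}A_k^i=1$, and the ``only if'' follows by comparing parities of excursions of equal sign and invoking the aperiodicity of $\tau(i)$). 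So the obstruction lies in the stated equivalence itself, not in your strategy for the dichotomy; the second half of the lemma needs to be restated before any proof, yours or the paper's, can go through.
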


\begin{proof}
Let $\wh{\phi},\phi,\phi_{+},\phi_{-}$ denote the Fourier transforms of $\wh{\tau}(i),\tau(i),\Prob_{i}(\tau(i)\in\cdot|A_{1}^{i}=1)$, and $\Prob_{i}(\tau(i)\in\cdot|$ $A_{1}^{i}=-1)$, respectively, where $\phi_{\pm}:\equiv 1$ in the case $\Prob_{i}(A_{1}^{i}=\pm 1)=0$. Let $d\in\N$ denote the period (lattice span) of $\wh{\tau}(i)$.

\vspace{.1cm}
(a) If $\Prob_{i}(A_{1}^{i}=1)=1$, then $\wh{\tau}(i)=\tau(i)$ is aperiodic by our model assumptions.

\vspace{.1cm}
(b) If $\Prob_{i}(A_{1}^{i}=-1)=1$, then $\wh{\tau}(i)=\tau_{2}(i)$ implies $\phi(2\pi/d)^{2}=\wh{\phi}(2\pi/d)=1$, thus $\phi(2\pi/d)=-1$ or, equivalently, $\Prob_{i}(\tau(i)\in\frac{d}{2}+d\Z)=1$. Since $\tau(i)$ is integer-valued, $d$ must be even, and since $\tau(i)$ is aperiodic, $\frac{d}{2}$ and $d$ must be coprime, which together gives $d=2$.

\vspace{.1cm}
(c) Left with the case $0<\alpha:=\Prob_{i}(A_{1}^{i}=1)<1$, note that $p_{n}:=\Prob_{i}(\rho(1)=n)>0$ for all $n\in\N$ (see \eqref{eq:rho(1) aperiodic}). By combining this fact with
$$ 1\ =\ \wh{\phi}(2\pi/d)\ =\ p_{1}\phi_{+}(2\pi/d)+\sum_{n\ge 2}p_{n}\phi_{+}(2\pi/d)^{n-2}\phi_{-}(2\pi/d)^{2}, $$
we obtain $\phi(2\pi/d)=\alpha\phi_{+}(2\pi/d)+(1-\alpha)\phi_{-}(2\pi/d)=1$ and thus $d=1$, again by the aperiodicity of $\tau(i)$.\qed
\end{proof}

The next lemma completes the proof of Theorem \ref{thm:weak convergence} under the proviso $\Prob_{i}(|A_{1}^{i}|=1)=1$ for some/all $i\in\cS$ and further provides information about the possible limits $Q(i,\cdot)$. Recall from the beginning of this subsection that
$$ |\Pi_{n}|\ =\ \frac{a_{M_{0}}}{a_{M_{n}}}\quad\text{a.s.} $$
for a suitable positive sequence $(a_{j})_{j\in\cS}$, and from Lemma \ref{lem:sign Pi_n} that
\begin{equation}\label{eq:homology Pi_n}
\sign(\Pi_{n})\ =\ \frac{\sigma_{M_{n}}}{\sigma_{M_{0}}},\quad\text{hence}\quad\Pi_{n}\ =\ \frac{\sigma_{M_{n}}a_{M_{0}}}{\sigma_{M_{0}}a_{M_{n}}}\quad\text{a.s.}
\end{equation}
for a suitable sequence $(\sigma_{j})_{j\in\cS}$ in $\{\pm 1\}$ if even $\Prob_{i}(A_{1}^{i}=1)=1$ for some/all $i$. 

\vspace{.1cm}
If $\wh{\tau}(i)$ is 2-periodic and thus $\Prob_{i}(A_{1}^{i}=-1)=1$, then \eqref{eq:homology Pi_n} remains valid in a slightly modified form. This follows because Lemma \ref{lem:sign Pi_n} still applies, but to the augmented MRW $(\wh{M}_{n},\Pi_{n})_{n\ge 0}$ for which $\Prob_{i}(\Pi_{\wh{\tau}(i)}=1)=1$. We infer the existence of a suitable family $(\wh\sigma_{(i,\delta}))_{i\in\cS,\delta\in\{\pm 1\}}$ such that
$$ \sign(\Pi_{n})\ =\ \frac{\wh\sigma_{\wh{M}_{n}}}{\wh\sigma_{\wh{M}_{0}}},\quad\text{hence}\quad\Pi_{n}\ =\ \frac{\wh\sigma_{\wh{M}_{n}}a_{M_{0}}}{\wh\sigma_{\wh{M}_{0}}a_{M_{n}}}\quad\text{a.s.} $$
Moreover, $\wh\sigma_{(i,1)}=-\wh\sigma_{(i,-1)}$ for all $i\in\cS$ because $-1=\sign(\Pi_{\tau(i)})=\wh\sigma_{(i,-1)}/\wh\sigma_{(i,1)}$ $\Prob_{i}$-a.s.

\begin{Lemma}\label{lem:final lemma case 2}
Suppose \eqref{standing assumption}, $\Prob_{i}(|A_{1}^{i}|=1)=1$ for some $i\in\cS$, and let $Z_{0}$ be an admissible variable. Then $\Prob_{i}(\Psi_{1:n}(Z_{0})\in\cdot)$ converges weakly to some $Q_{i}$ iff \eqref{pi-degenerate} and one of the following conditions hold:
\begin{description}[(b.2)]\itemsep2pt
\item[(a)] $\Prob_{i}(\wh{\tau}(i)\in\cdot)$ is aperiodic. In this case,
\begin{align*}
Q_{i}\ =\ 
\begin{cases}
\sum_{j\in\cS}\pi_{j}\,\Prob_{i}(c_{i}+\sigma_{i}a_{i}(Z_{0}-c_{j})/(\sigma_{j}a_{j})\in\cdot),&\text{if }\Prob_{i}(A_{1}^{i}=1)=1,\\
\hfill\sum_{j\in\cS}\pi_{j}\,\Prob_{i}(c_{i}+a_{i}Y_{j}/a_{j}\in\cdot),&\text{if }\Prob_{i}(A_{1}^{i}=1)<1,
\end{cases}
\end{align*}
where $\Prob_{i}(Y_{j}\in\cdot)=[\Prob_{i}(Z_{0}-c_{j}\in\cdot)+\Prob_{i}(-(Z_{0}-c_{j})\in\cdot)]/2$.
\item[(b)] $\Prob_{i}(\wh{\tau}(i)\in\cdot)$ is 2-periodic and the weak limit of $\Pi_{2n}(Z_{0}-c_{M_{2n}})$ under $\Prob_{i}$ exists and is symmetric. In this case,
$$ Q_{i}\ =\ \frac{1}{2}\sum_{(j,\delta)\in\cS_{i}}\pi_{j}\,\Prob_{i}\left(c_{i}+\frac{\wh\sigma_{(i,1)}\,a_{i}}{\wh\sigma_{(j,\delta)}\,a_{j}}(Z_{0}-c_{j})\in\cdot\right), $$
where $\cS_{i}\subset\cS\times\{\pm 1\}$ denotes the cyclic class of the chain $(\wh{M}_{n})_{n\ge 0}$ which contains the state $(i,1)$.
\end{description}
\end{Lemma}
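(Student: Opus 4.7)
The strategy is to reduce to the degenerate regime, exploit null-homology of $\log|\Pi_n|$, and then apply an ergodic theorem for either $(M_n)_{n\ge 0}$ or the augmented chain $\wh M_n:=(M_n,\sign(\Pi_n))$. First, Lemma~\ref{lem:Psi_1:n(0) to infty in (b)} shows that without \eqref{pi-degenerate} one has $|\Psi_{1:n}(Z_0)|\xrightarrow{\Prob_\pi}\infty$, ruling out weak convergence to a proper $Q_i$. So I would assume \eqref{pi-degenerate} and use \eqref{eq:general homology} (via Propositions~\ref{prop:degeneracy consequences}--\ref{prop2:degeneracy consequences}) to get
\begin{equation*}
\Psi_{1:n}(Z_0)\ =\ c_i\,+\,\Pi_n(Z_0-c_{M_n})\quad\Prob_i\text{-a.s.,}
\end{equation*}
reducing the analysis to the weak convergence of $\Pi_n(Z_0-c_{M_n})$. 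Null-homology of $(M_n,S_n)$ yields $|\Pi_n|=a_i/a_{M_n}$ under $\Prob_i$, and Lemma~\ref{lem:sign Pi_n} supplies the sign, either directly (when $\Prob_i(A_1^i=1)=1$) or after passing to the augmented chain $\wh M_n$, for which the product over a full period equals $1$ (when $\Prob_i(A_1^i=-1)=1$), giving $\Pi_n=\wh\sigma_{\wh M_n}a_i/(\wh\sigma_{(i,+1)}a_{M_n})$.

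In case (a), with $\wh\tau(i)$ aperiodic, I would split further by the value of $\Prob_i(A_1^i=1)$. If this equals $1$ (for some, hence all $i\in\cS$ by Lemma~\ref{lem:solidarity A_1^s=1}), then $\sign(\Pi_n)=\sigma_{M_n}/\sigma_i$ is a deterministic function of $M_n$; aperiodicity of $(M_n)$ gives $M_n\RA\pi$ under $\Prob_i$, and the admissibility of $Z_0$ (which under $\Prob_i$ becomes independence of $Z_0$ from $(M_n)_{n\ge 1}$) then yields the first formula in (a) by direct substitution. If $0<\Prob_i(A_1^i=1)<1$, the augmented chain $\wh M_n$ is aperiodic by Lemma~\ref{lem:lattice-type whtau(i)}(c), so \eqref{eq:erdodic thm M_n^*} gives joint convergence of $(M_n,\sign(\Pi_n))$ to $(J,\varepsilon)$ with $J\sim\pi$ and $\varepsilon$ uniform on $\{\pm 1\}$, independent of each other and of $Z_0$. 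Averaging $c_i+\varepsilon\,a_i(Z_0-c_J)/a_J$ over $\varepsilon$ replaces $Z_0-c_j$ by its symmetrization $Y_j$, producing the second formula.

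In case (b), $\wh\tau(i)$ is 2-periodic and so $\Prob_i(A_1^i=-1)=1$ by Lemma~\ref{lem:lattice-type whtau(i)}; the augmented chain $\wh M_n$ then has period $2$ with cyclic class $\cS_i\ni(i,+1)$ visited at even times. The restrictions $(\wh M_{2n})$ and $(\wh M_{2n+1})$ are each aperiodic and ergodic on their respective classes, so both subsequences $\Psi_{1:2n}(Z_0)$ and $\Psi_{1:2n+1}(Z_0)$ converge weakly to some distributions $D_e$ and $D_o$, respectively. The relation $\wh\sigma_{(j,-1)}=-\wh\sigma_{(j,+1)}$ (a direct consequence of Lemma~\ref{lem:sign Pi_n} applied to $\wh M_n$) shows that passing from $\cS_i$ to $\cS_i^c$ negates $\Pi_n$; therefore $D_o$ is obtained from $D_e$ by reflection about $c_i$, and $D_e=D_o$ iff $D_e-c_i$ is symmetric about $0$, which is precisely condition (b.2). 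The explicit formula for $Q_i$ is then read off from the ergodic limit of $\wh M_{2n}$ on $\cS_i$, paired with the admissibility of $Z_0$.

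\textbf{Main obstacle.} The hardest step is case (b.2): pinning down the cyclic-class structure of $\wh M_n$, establishing the sign-flip relation $\wh\sigma_{(j,-1)}=-\wh\sigma_{(j,+1)}$ via Lemma~\ref{lem:sign Pi_n}, and translating $D_e=D_o$ into the symmetry criterion. Once these are in place, the explicit form of $Q_i$ follows by careful bookkeeping of the contributions from the two cyclic classes.
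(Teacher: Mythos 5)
Your proposal is correct and follows essentially the same route as the paper: necessity of \eqref{pi-degenerate} via Lemma~\ref{lem:Psi_1:n(0) to infty in (b)}, reduction to $c_i+\Pi_n(Z_0-c_{M_n})$ through \eqref{eq:general homology}, the ergodic theorem for $(M_n)$ or $(\wh M_n)$ in case (a), and an even/odd subsequence analysis in case (b). The only (harmless) deviation is in case (b): the paper identifies the odd-subsequence limit as the reflection of the even one by a distributional shift at the first return time $\tau(i)$ (using $\Prob_i(\tau(i)\in 2\N-1)=1$ and $A_1^i=-1$), whereas you read it off from the cyclic-class structure of $\wh M_n$ and the sign flip $\wh\sigma_{(j,-1)}=-\wh\sigma_{(j,1)}$ — this works, but tacitly uses that $\cS_i^{c}=\{(j,-\delta):(j,\delta)\in\cS_i\}$, which should be noted as a consequence of the aperiodicity of $(M_n)$ combined with the $2$-periodicity of $(\wh M_n)$.
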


\begin{proof}
By Lemma \ref{lem:Psi_1:n(0) to infty in (b)}, the degeneracy condition \eqref{pi-degenerate} is necessary for the weak convergence of $\Prob_{i}(\Psi_{1:n}(Z_{0})\in\cdot)$ and therefore assumed hereafter.

\vspace{.1cm}
(a) If $\wh{\tau}(i)$ is aperiodic under $\Prob_{i}$, then we have that, for all $n\ge 0$,
$$ \Psi_{1:n}(Z_{0})\ =\ c_{i}\ +\ \sign(\Pi_{n})\,\frac{a_{i}}{a_{M_{n}}}\,(Z_{0}-c_{M_{n}})\quad\Prob_{i}\text{-a.s.} $$
and in the case $\Prob_{i}(A_{1}^{i}=1)=1$ even (by \eqref{eq:homology Pi_n})
$$ \Psi_{1:n}(Z_{0})\ =\ c_{i}\ +\ \frac{\sigma_{i}a_{i}}{\sigma_{M_{n}}a_{M_{n}}}\,(Z_{0}-c_{M_{n}})\quad\Prob_{i}\text{-a.s.} $$
An application of the ergodic theorem for either $(\wh{M}_{n})_{n\ge 0}$ or just $(M_{n})_{n\ge 0}$ yields the asserted weak convergence of $\Prob_{i}(\Psi_{1:n}(Z_{0})\in\cdot)$ and also the form of its limit $Q_{i}$.

\vspace{.1cm}
(b) If $\wh{\tau}(i)$ is 2-periodic under $\Prob_{i}$, then $(\Pi_{2n}(Z_{0}-c_{M_{2n}}))_{n\ge 0}$ forms a regenerative sequence with aperiodic regeneration epochs $\wh{\tau}_{n}(i)/2$, $n\ge 1$, and therefore converges in distribution under $\Prob_{i}$. A similar conclusion holds for $(\Pi_{2n+1}(Z_{0}-c_{M_{2n+1}}))_{n\ge 0}$. On  the other hand, the a.s. identity $\Psi_{1:n}(Z_{0})=c_{M_{0}}+\Pi_{n}(Z_{0}-c_{M_{n}})$, valid for all $n\ge 0$, shows that $\Psi_{1:n}(Z_{0})$ converges in distribution under $\Prob_{i}$ iff $\Pi_{n}(Z_{0}-c_{M_{n}})$ does so, thus in the present situation iff the weak limits under $\Prob_{i}$ of the afore-mentioned regenerative sequences are equal. We will finally verify that the latter conclusion holds iff the weak limit of $\Pi_{2n}(Z_{0}-c_{M_{2n}})$ is symmetric under $\Prob_{i}$.

Let us write $T_{n}\weakeq T_{n}'$ as shorthand for $T_{n}$ and $T_{n}'$ to have the same distributional limit as $n\to\infty$ (under some probability measure to be stated). Note that $\Prob_{i}(A_{1}^{i}=-1)=1$ by Lemma \ref{lem:lattice-type whtau(i)} and that $\Prob_{i}(\tau(i)=2\N-1)=1$ follows from the 2-periodicity of $\wh{\tau}(i)=\tau_{2}(i)$. Now we infer that under $\Prob_{i}$
\begin{align*}
\Pi_{2n+1}(Z_{0}-c_{M_{2n+1}})\ &\weakeq\ \1_{\{\tau(i)<2n+1\}}\left(-\prod_{k=\tau(i)+1}^{2n+1}A_{k}\right)(Z_{0}-c_{M_{2n+1}})\\
&\weakeq\ \left(-\prod_{k=\tau(i)+1}^{2n+1}A_{k}\right)(Z_{0}-c_{M_{2n+1}})\\
&\weakeq\ -\Pi_{2n}(Z_{0}-c_{M_{2n}}).
\end{align*}
Therefore, $\Pi_{2n}(Z_{0}-c_{M_{2n}})$ and $\Pi_{2n+1}(Z_{0}-c_{M_{2n+1}})$ have indeed the same weak limit under $\Prob_{i}$ iff the weak limit of $\Pi_{2n}(Z_{0}-c_{M_{2n}})$ is symmetric under $\Prob_{i}$. Since
$$ c_{i}+\Pi_{2n}(Z_{0}-c_{M_{2n}})\ =\ c_{i}+\frac{\wh\sigma_{(i.1)}\,a_{i}}{\wh\sigma_{(j,\sign(\Pi_{2n}))}\,a_{j}}(Z_{0}-c_{M_{2n}})\quad\Prob_{i}\text{-a.s.} $$
we also obtain the asserted form of $Q_{i}$ by letting $n$ tend to $\infty$.\qed
\end{proof}

\subsection{The case $\limsup_{n\to\infty}|\Pi_{\tau_{n}(i)}|=\infty$ a.s.}\label{subsec:cond (c)}

We start by noting that under the proviso of this subsection, Theorem \ref{thm:G/M theorem} by Goldie and Maller asserts that failure of \eqref{pi-degenerate} and thus of \eqref{degenerate} entails
\begin{equation}\label{eq:divergence of Psi^s(Z_{0})}
|\Psi_{1:n}^{i}(Z_{0})|\xrightarrow{\Prob_{i}}\infty,\quad\text{i.e.}\quad\lim_{n\to\infty}\Prob_{i}(|\Psi_{1:n}^{i}(Z_{0})|\le a)=0\quad\text{for all }a>0
\end{equation}
for all $i\in\cS$ and admissible $Z_{0}$. It may be surprising that there seems to be no easy argument to convert this into $|\Psi_{1:n}(Z_{0})|\xrightarrow{\Prob_{\pi}}\infty$. The result is shown as Lemma \ref{lem:Psi_1:n(0) to infty in (c)} below but requires the following auxiliary lemma.

\begin{Lemma}\label{lem:B_1 nondegenerate given M and Pi}
Suppose that $\limsup_{n\to\infty}|\Pi_{\tau_{n}(i)}|=\infty$ a.s. and \eqref{pi-degenerate} fails to hold. Then there exists $m\in\N$ such that the conditional law of $\Psi_{1:m}(0)$ given $M_{0},M_{m}$ and $\Pi_{m}$ is nondegenerate.
\end{Lemma}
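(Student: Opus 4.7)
The plan is to argue by contraposition: assuming the conditional law of $\Psi_{1:m}(0)$ given $(M_0,M_m,\Pi_m)$ is degenerate for \emph{every} $m\in\N$, we shall derive \eqref{pi-degenerate}. Fix $i\in\cS$, let $I:=\{m\ge 1:\Prob_i(\tau(i)=m)>0\}$, and for each $m$ choose a measurable $h_m$ with $\Psi_{1:m}(0)=h_m(M_0,M_m,\Pi_m)$ a.s.; set $g_m(a):=h_m(i,i,a)$. On $\{\tau(i)=m\}$ we have $M_m=i$ and $\Pi_m=A_1^i$, hence $B_1^i=g_m(A_1^i)$ a.s.; on $\{\tau(i)=m_1,\tau_2(i)-\tau(i)=m_2\}$ we have $M_{m_1+m_2}=i$ and $\Pi_{m_1+m_2}=A_1^iA_2^i$, which yields the key identity
\begin{equation}\label{eq:plan-keyfe}
g_{m_1}(A_1^i)+A_1^i\,g_{m_2}(A_2^i)\ =\ g_{m_1+m_2}(A_1^i A_2^i)\quad\text{a.s.,}
\end{equation}
under a conditioning which renders $(A_k^i,B_k^i)_{k=1,2}$ independent with $A_k^i\sim\mu_{m_k}:=\Prob_i(A_1^i\in\cdot\,|\,\tau(i)=m_k)$.

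The regime $\limsup_n|\Pi_{\tau_n(i)}|=\infty$ rules out case (T2'), so $\Prob_i(|A_1^i|=1)<1$ and there exists $m^*\in I$ such that $\mathrm{supp}(\mu_{m^*})$ contains some $a^*$ with $|a^*|\ne 1$, in particular $a^*\ne 1$. Taking $m_1=m_2=m^*$ in \eqref{eq:plan-keyfe}, the pairs $(A_k^i,B_k^i)$ are iid under the conditioning and the identity reads $B+AB'=g_{2m^*}(AA')$ with $B=g_{m^*}(A)$, where $(A,B):=(A_1^i,B_1^i)$ and $(A',B')$ an independent copy. Grincevi\v{c}ius's Lemma \ref{lem:Grincevicius} then yields a constant $c\in\R$ such that either $Ac+B=c$ or $(A,B)=(1,c)$ a.s.; the second alternative is excluded by $a^*\in\mathrm{supp}(\mu_{m^*})$ and $a^*\ne 1$, so $g_{m^*}(a)=c(1-a)$ on $\mathrm{supp}(\mu_{m^*})$.

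To propagate this to every $m\in I$, substitute $g_{m^*}(\cdot)=c(1-\cdot\,)$ into \eqref{eq:plan-keyfe} for the two orderings $(m_1,m_2)=(m^*,m)$ and $(m_1,m_2)=(m,m^*)$. For any $(a,b)\in\mathrm{supp}(\mu_{m^*})\times\mathrm{supp}(\mu_m)$, the two resulting identities evaluate the same function $g_{m+m^*}$ at $ab=ba$, so subtracting one from the other and cancelling gives
\begin{equation*}
c(1-a)(1-b)\ =\ g_m(b)(1-a).
\end{equation*}
Setting $a=a^*\ne 1$ yields $g_m(b)=c(1-b)$ on $\mathrm{supp}(\mu_m)$, hence $B_1^i=c(1-A_1^i)$ a.s.\ on $\{\tau(i)=m\}$. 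Since the events $\{\tau(i)=m\}$, $m\in I$, partition $\{\tau(i)<\infty\}$ $\Prob_i$-a.s., we obtain $B_1^i=c(1-A_1^i)$ $\Prob_i$-a.s., that is, condition \eqref{degenerate} with $c_i=c$. Lemma \ref{lem:solidarity} then extends \eqref{degenerate} to all states, and Proposition \ref{prop:degeneracy consequences} (applicable since $\Prob_i(A_1^i=1)<1$) upgrades it to \eqref{pi-degenerate}, contradicting our assumption.

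The main obstacle is the compatibility step: showing that the Grincevi\v{c}ius constant $c$ obtained at the specific $m^*$ is the correct constant at \emph{every} $m\in I$. This is precisely where both orderings of the mixed pair in \eqref{eq:plan-keyfe} are needed, together with the existence of a point $a^*\ne 1$ in $\mathrm{supp}(\mu_{m^*})$ guaranteed by $\limsup_n|\Pi_{\tau_n(i)}|=\infty$; in case (T2') no such $a^*$ need exist and the argument would break down.
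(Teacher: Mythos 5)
Your proof is correct. It shares the skeleton of the paper's argument --- contraposition, reduction to a functional equation of the form $B_{1}+A_{1}B_{2}=g(A_{1}A_{2})$ for conditionally iid pairs, an appeal to Grincevi\v{c}ius's Lemma \ref{lem:Grincevicius}, and finally Lemma \ref{lem:solidarity} together with Proposition \ref{prop:degeneracy consequences} to upgrade the one-state identity to \eqref{pi-degenerate} --- but the middle steps are organized genuinely differently. The paper conditions on returns at \emph{deterministic} times, applying Lemma \ref{lem:Grincevicius} on $\{M_{0}=M_{n}=M_{2n}=i\}$ to the blocks $(\Pi_{n},\Psi_{1:n}(0))$ and $(\prod_{k=n+1}^{2n}A_{k},\Psi_{n+1:2n}(0))$ for \emph{every} $n$ with $\Prob_{i}(M_{n}=i)>0$; it must then run a three-way case distinction (first alternative for all $n$ / second alternative for all $n$ / mixed), use the relation $b_{mn}=b_{n}$ to match constants, and in the mixed case invoke the divergence statement \eqref{eq:divergence of Psi^s(Z_{0})}. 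You instead condition on the excursion lengths $\tau_{k}(i)-\tau_{k-1}(i)$, apply Grincevi\v{c}ius \emph{once} at a length $m^{*}$ whose conditional law $\mu_{m^{*}}$ charges $\{|a|\ne 1\}$ --- such an $m^{*}$ exists precisely because the regime (T3') excludes (T2') by Proposition \ref{prop:trichotomy embedded RW} --- so the second Grincevi\v{c}ius alternative is ruled out at the outset, and you then propagate the constant $c$ to every other excursion length by evaluating $g_{m+m^{*}}$ at $ab=ba$ from the two orderings of a mixed excursion. This commutativity trick replaces the paper's case analysis and dispenses with \eqref{eq:divergence of Psi^s(Z_{0})} entirely; it is arguably the cleaner route.

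One small point of hygiene: your identities hold only $\mu_{m^{*}}\otimes\mu_{m}$-almost everywhere, not pointwise on $\mathrm{supp}(\mu_{m^{*}})\times\mathrm{supp}(\mu_{m})$, since the $g_{m}$ are merely measurable. So ``setting $a=a^{*}$'' should be a Fubini step: because $\mu_{m^{*}}(\{a:a\ne 1\})>0$, there is some $a\ne 1$ for which $c(1-a)(1-b)=g_{m}(b)(1-a)$ holds for $\mu_{m}$-a.e.\ $b$, and dividing by $1-a$ gives $g_{m}(b)=c(1-b)$ $\mu_{m}$-a.e., which is all that is needed to conclude $B_{1}^{i}=c(1-A_{1}^{i})$ $\Prob_{i}$-a.s.\ on $\{\tau(i)=m\}$. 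This is cosmetic and does not affect the validity of the argument.
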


\begin{proof}
Suppose by contraposition that $\Psi_{1:n}(0)=f_{n}(M_{0},M_{n},\Pi_{n})$ a.s. for all $n\in\N$ and fix an arbitrary $i\in\cS$. For any $n$ with $\Prob_{i}(M_{n}=i)>0$ and thus $\Prob_{i}(M_{n}=M_{2n}=i)>0$, we then obtain
$$ \Psi_{1:n}(0)+\Pi_{n}\Psi_{n+1:2n}(0)\ =\ \Psi_{1:2n}(0)\ =\ f_{2n}(i,i,\Pi_{2n})\quad\text{a.s.} $$
on $\{M_{0}=M_{n}=M_{2n}=i\}$. Hence, by Lemma \ref{lem:Grincevicius} (with $A_{1}=\Pi_{n},B_{1}=\Psi_{1:n}(0)$ and $B_{2}=\Psi_{n+1:2n}(0)$), either $\Psi_{1:n}(0)=b_{n}(1-\Pi_{n})$ or $(\Pi_{n},\Psi_{1:n}(0))=(1,c)$ $\Prob_{i}(\cdot|M_{n}=i)$-a.s. 

\vspace{.1cm}
If the first alternative holds for all $n\in I:=\{m:\Prob_{i}(M_{m}=i)>0\}$, then note first that
\begin{align*}
b_{mn}(1-\Pi_{mn})\ &=\ \Psi_{1:mn}(0)\ =\ \Psi_{1:n}(0)\ +\ \sum_{k=1}^{m-1}\Pi_{kn}\,\Psi_{kn+1:(k+1)n}(0)\\
&=\ b_{n}(1-\Pi_{n})\ +\ \sum_{k=1}^{m-1}\Pi_{kn}\,b_{n}\left(1-\frac{\Pi_{(k+1)n}}{\Pi_{kn}}\right)\ =\ b_{n}(1-\Pi_{mn})\quad\text{a.s.}
\end{align*}
on $\{M_{0}=M_{n}=\ldots=M_{mn}=i\}$ entails $b_{mn}=b_{n}$ for all $m\ge 1$. But for $m\in I$, the same argument shows $b_{m}=b_{mn}$ for all $n\ge 1$ and thus $b_{n}\equiv b\in\R$ for some $b\in\R$ and all $n\in I$ which in turn finally yields $B_{1}^{i}=\Psi_{1:\tau(i)}(0)=b(1-A_{1}^{i})$ $\Prob_{i}$-a.s. which is impossible if \eqref{pi-degenerate} is ruled out.

\vspace{.1cm}
If the second alternative holds for all $n\in I$, then we arrive at the conclusion that $\Prob_{i}(A_{1}^{i}=1)=1$ which is ruled out by the proviso of this subsection.

\vspace{.1cm}
Finally, consider the mixed case when $\Psi_{1:n}(0)=b_{n}(1-\Pi_{n})$ $\Prob_{i}(\cdot|M_{n}=i)$-a.s. and $(\Pi_{m},\Psi_{1:m}(0))=(1,b_{m})$ $\Prob_{i}(\cdot|M_{m}=i)$-a.s. for some distinct $m,n\in I$. Then use $\Psi_{1:ln}(0)=b_{n}(1-\Pi_{ln})$ $\Prob_{i}(\cdot|M_{ln}=i)$-a.s. for all $l\ge 1$ as shown above to infer
$$ b_{n}(1-\Pi_{lmn})\ =\ \Psi_{1:lmn}(0)\ =\ \ =\ \Psi_{1:m}(0)\ +\ \sum_{k=1}^{ln-1}\Psi_{km+1:(k+1)m}(0)\ =\ lnb_{m}\quad\text{a.s.} $$
on $\{M_{0}=M_{m}=\ldots=M_{lmn}=i\}$ for \emph{all} $l\ge 1$ and thus $b_{n}=0$ for all $n\in I$. But this means that $\Psi_{1:n}^{i}(0)=\Psi_{1:\tau_{n}(i)}(0)=0$ $\Prob_{i}$-a.s. for all $n\ge 1$ which is impossible by \eqref{eq:divergence of Psi^s(Z_{0})}.\qed
\end{proof}

\begin{Lemma}\label{lem:Psi_1:n(0) to infty in (c)}
Suppose that $\limsup_{n\to\infty}|\Pi_{\tau_{n}(i)}|=\infty$ a.s. Then $|\Psi_{1:n}(Z_{0})|\xrightarrow{\Prob_{\pi}}\infty$ for any admissible $Z_{0}$ iff \eqref{pi-degenerate} fails to hold.
\end{Lemma}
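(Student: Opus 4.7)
For the easy (contrapositive) direction, assume \eqref{pi-degenerate} holds and take the admissible initial value $Z_0:=c_{M_0}$. Using the homology identity \eqref{eq:general homology} from Proposition \ref{prop:degeneracy consequences}, one computes
\begin{equation*}
\Psi_{1:n}(Z_0)\ =\ \Pi_n c_{M_0}\,+\,\sum_{k=1}^n\Pi_{k-1}B_k\ =\ c_{M_0}\,+\,\Pi_n(c_{M_0}-c_{M_n})\quad\text{a.s.},
\end{equation*}
which on $\{M_0=M_n=i\}$ reduces to $c_i$. Choosing any $i\in\cS$ with $\pi_i>0$, ergodicity of $(M_n)_{n\ge 0}$ yields
\begin{equation*}
\liminf_{n\to\infty}\Prob_\pi(|\Psi_{1:n}(Z_0)|\le|c_i|)\ \ge\ \pi_i\lim_{n\to\infty}\Prob_i(M_n=i)\ =\ \pi_i^2\ >\ 0,
\end{equation*}
ruling out divergence of $|\Psi_{1:n}(Z_0)|$ to infinity in $\Prob_\pi$-probability.

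For the converse, assume \eqref{pi-degenerate} fails. Propositions \ref{prop:degeneracy consequences} and \ref{prop2:degeneracy consequences} then guarantee that \eqref{degenerate} fails as well. Applying Theorem \ref{thm:G/M theorem} to the iid iterated system $(\Psi_n^i)_{n\ge 1}$ (fix any $i\in\cS$) yields $|\Psi_{1:\tau_N(i)}(Z_0)|=|\Psi_{1:N}^i(Z_0)|\xrightarrow{\Prob_i}\infty$ for every admissible $Z_0$. The task is to upgrade this divergence along the random subsequence $(\tau_N(i))_{N\ge 0}$ to divergence along all $n$, and from $\Prob_i$ to $\Prob_\pi$.

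I do this by contradiction, closely modeled on the proof of Lemma \ref{lem:final lemma case 1}. Suppose that, for some admissible $Z_0$ and some $a>0$, there is a subsequence $n_k\uparrow\infty$ with $\Prob_\pi(|\Psi_{1:n_k}(Z_0)|\le a)\not\to 0$. A Helly extraction produces a nonzero vague subsequential limit $\mu$ of $\Prob_\pi(\Psi_{1:n_k}(Z_0)\in\cdot)$. Exploiting the recursion $\Psi_{1:n_k}(Z_0)=\Psi_1(\Psi_{2:n_k}(Z_0))$ together with the Markov property at time $1$, I then build from $\mu$ a stochastic kernel $P\in\cP(\cS,\R)$, obtained by disintegration with respect to $M_0$, that solves the Markov SFPE \eqref{Markovian SFPE} and is the conditional law of a (possibly improper) random variable $R$ with $\cL(R\mid M_0=i)=P(i,\cdot)$.

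The main obstacle is (i) ensuring that $P(i,\cdot)$ is a proper probability measure (no escape of mass in the vague limit), and (ii) converting the existence of such $P$ into a contradiction. For (i), I invoke Lemma \ref{lem:B_1 nondegenerate given M and Pi}: there is $m\in\N$ such that $\Psi_{1:m}(0)$ has nondegenerate conditional law given $(M_0,M_m,\Pi_m)$; together with conditional independence of $\Psi_{1:m}(0)$ and $\Psi_{m+1:n_k}(Z_0)$ given $M_m$, this ``smoothing'' in the decomposition $\Psi_{1:n_k}(Z_0)=\Pi_m\,\Psi_{m+1:n_k}(Z_0)+\Psi_{1:m}(0)$ rules out loss of mass in the limit. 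For (ii), once $P$ is a genuine probability-kernel fixed point of $\Psi_1$, Theorem \ref{thm:fixed-point property} applies, and since we are in the regime $\limsup_{n\to\infty}|\Pi_{\tau_n(i)}|=\infty$ a.s.\ (Case (C4)), the only possible $P$ has $P(i,\cdot)=\delta_{c_i}$ with $(c_i)_{i\in\cS}$ satisfying \eqref{pi-degenerate}. This contradicts the standing assumption and completes the proof.
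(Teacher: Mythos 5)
Your ``only if'' direction (assume \eqref{pi-degenerate}, take $Z_{0}=c_{M_{0}}$, use \eqref{eq:general homology} and aperiodicity to keep mass near $c_{i}$ on $\{M_{0}=M_{n}=i\}$) is correct; the paper treats this direction as obvious and your version is a clean way to make it explicit.

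The hard direction, however, has a genuine gap. The vague-limit/fixed-point scheme you want to transplant from Subsection \ref{subsec:cond (a)} (the unnamed lemma preceding Lemma \ref{lem:final lemma case 1}) identifies the subsequential limit as a solution of the SFPE only because of the step
$\Psi_{1:n_{k}}(0)-\Psi_{1:n_{k}-m}(0)=\Pi_{n_{k}-m}\,\Psi_{n_{k}-m+1:n_{k}}(0)\xrightarrow{\Prob}0$,
which rests entirely on $\Pi_{n}\xrightarrow{\Prob_{\pi}}0$. That is what forces the vague limits along $n_{k}$ and along $n_{k}-m$ to coincide, so that the recursion $\Psi_{1:n_{k}}(Z_{0})=\Psi_{1}(\Psi_{2:n_{k}}(Z_{0}))$ passes to the limit as a genuine fixed-point equation. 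In the present regime $\limsup_{n}|\Pi_{\tau_{n}(i)}|=\infty$ a.s.\ this step is unavailable (in case (c.2) the $|\Pi_{n}|$ do not even tend to $0$ in probability), so after your Helly extraction the limits of $\Prob_{\pi}(\Psi_{1:n_{k}}(Z_{0})\in\cdot)$ and of $\Prob_{j}(\Psi_{1:n_{k}-1}(Z_{0})\in\cdot)$ need not be related, and no element of $\cP(\cS,\R)$ solving \eqref{Markovian SFPE} (nor even a solution of the embedded equation \eqref{SFPE2} for a single $i$) is produced. Your point (i) also misreads the role of Lemma \ref{lem:B_1 nondegenerate given M and Pi}: escape of mass is not the obstruction (as in Subsection \ref{subsec:cond (a)} one can always condition on finiteness of the limit variable); the obstruction is the shift-invariance of the subsequential limit, and conditional nondegeneracy of $\Psi_{1:m}(0)$ does nothing to restore it.

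What the paper actually does is quite different and is the real content of the lemma (it even remarks that ``there seems to be no easy argument''): it uses Lemma \ref{lem:B_1 nondegenerate given M and Pi} to construct a conditional symmetrization $Y_{n}=B_{n}-B_{n}'$ of the $B_{n}$ given $(M_{n-1},M_{n},A_{n})$, checks that \eqref{pi-degenerate} fails for the symmetrized system, applies Theorem \ref{thm:G/M theorem} to the symmetrized embedded iid system to get $|\sum_{k\le\tau_{\lceil bn\rceil}(i)}\Pi_{k-1}Y_{k}|\xrightarrow{\Prob_{\pi}}\infty$, and then, on the event $\{N(n)\ge bn\}$ with $b=\pi_{i}/2$, exploits the conditional independence of $\Psi_{1:n}(Z_{0})$ and a suitably coupled copy $\wh{\Psi}_{1:n}(Z_{0})$ together with Jensen's inequality to bound $\Prob_{\pi}(|\Psi_{1:n}(Z_{0})|\le x)^{2}$ by quantities tending to $0$. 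If you want to salvage your write-up, you need to replace your step (ii) and the whole fixed-point detour by an argument of this concentration-function type; as it stands, the passage from divergence along $(\tau_{N}(i))_{N\ge0}$ to divergence along all $n$ is not established.
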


\begin{proof}
We must only verify that failure of \eqref{pi-degenerate} entails $|\Psi_{1:n}(Z_{0})|\xrightarrow{\Prob_{\pi}}\infty$ for any admissible $Z_{0}$. We fix an arbitrary $Z_{0}$. By Lemma \ref{lem:B_1 nondegenerate given M and Pi}, there exists $m\in\N$ such that $\Psi_{1:m}(0)$ is not a.s. constant given $M_{0},M_{m}$ and $\Pi_{m}$. The ensuing argument, for which we assume $m=1$ without loss of generality, will show that $|\Psi_{1:mn}(Z_{0})|\xrightarrow{\Prob_{\pi}}\infty$ as $n\to\infty$ which in turn is easily seen to be equivalent to $|\Psi_{1:n}(Z_{0})|\xrightarrow{\Prob_{\pi}}\infty$.

\vspace{.1cm}
Let $(B_{n}')_{n\ge 1}$ be a sequence on a possibly enlarged probability space which, when conditioned upon $(M_{n-1},A_{n})_{n\ge 1}$, is independent of $(B_{n})_{n\ge 1}$ and identically distributed. Hence, $Y_{n}:=B_{n}-B_{n}'$ forms a conditional symmetrization of $B_{n}$ given $M_{n-1},M_{n},A_{n}$ with nondegenerate conditional law. We claim that
\eqref{pi-degenerate} fails to hold for $(M_{n-1},A_{n},Y_{n})_{n\ge 1}$ as well. Namely, if it did, thus
$$ Y_{1}\ =\ c_{M_{0}}'-c_{M_{1}}'A_{1}\quad\text{a.s.} $$
for suitable constants $c_{j}'\in\cS$, then
$$ 0\ =\ Y_{1}^{j}\ :=\ \sum_{k=1}^{\tau(j)}\Pi_{k-1}Y_{k}\ =\ c_{j}'(1-A_{1}^{j})\quad\Prob_{j}\text{-a.s.} $$
for all $j\in\cS$ would follow by the symmetry of $Y_{\tau(j)}$ (clear by the conditional symmetry of the $Y_{n}$) and thereupon $c_{j}'=0$ for all $j\in\cS$ because $\Prob_{j}(A_{1}^{j}=1)=1$ is ruled out by the proviso of this subsection. But this would yield the contradiction $Y_{1}=0$ a.s. 

\vspace{.1cm}
By another appeal to Theorem \ref{thm:G/M theorem},
\begin{equation}\label{eq:divergence of symmetrization}
\left|\sum_{k=1}^{\tau_{n}(i)}\Pi_{k-1}Y_{k}\right|\ =\ \big|\Psi_{1:n}^{i}(Z_{0})-\Psi_{1:n}^{\prime\,i}(Z_{0})\big|\ \xrightarrow{\Prob_{\pi}}\ \infty,
\end{equation}
where $\Psi_{n}'(x)=A_{n}x+B_{n}'$ for $n\ge 1$. The elementary renewal theorem provides us with $n^{-1}N(n)\to[\Erw_{i}\tau(i)]^{-1}=\pi_{i}$ a.s. and so
\begin{equation}\label{eq:conv N(n)}
\lim_{n\to\infty}\Prob_{\pi}(N(n)\ge bn)\ =\ 1
\end{equation}
for $b:=\pi_{i}/2$. For $n\ge 1$, we further define $\wh{\Psi}_{n}=\Psi_{n}'\1_{\{\tau_{\lceil bn\rceil}(i)\ge n\}}+\Psi_{n}\1_{\{\tau_{\lceil bn\rceil}(i)<n\}}$, giving
\begin{align*}
\wh{\Psi}_{1:n}(Z_{0})\ =\ \sum_{k=1}^{\tau_{\lceil bn\rceil}(i)}\Pi_{k-1}B_{k}'\,+\,\big(\Psi_{1:n}(0)-\Psi_{1:\lceil bn\rceil}^{i}(0)\big)\,+\,\Pi_{n}Z_{0}.
\end{align*}
Given $\cG_{n}:=\sigma(Z_{0},M_{0},(M_{k},A_{k})_{1\le k\le n},N(n),\Psi_{1:n}(0)-\Psi_{1:\lceil bn\rceil}^{i}(0))$, $\Psi_{1:n}(Z_{0})$ and $\wh{\Psi}_{1:n}(Z_{0})$ are obviously iid on $\{N(n)\ge\lceil bn\rceil\}$. Using this facts in combination with Jensen's inequality, we obtain for all $x\ge 0$
\begin{align*}
\Prob_{\pi}&\left(\left|\sum_{k=1}^{\tau_{\lceil bn\rceil}(i)}\Pi_{k-1}Y_{k}\right|\le 2x\right)\\
&\ge\ \Prob_{\pi}\left(\left|\sum_{k=1}^{\tau_{\lceil bn\rceil}(i)}\Pi_{k-1}B_{k}-\sum_{k=1}^{\tau_{\lceil bn\rceil}(i)}\Pi_{k-1}B_{k}'\right|\le 2x,\,N(n)\ge bn\right)\\
&=\ \Prob_{\pi}\left(\big|\Psi_{1:n}(Z_{0})-\wh{\Psi}_{1:n}(Z_{0})\big|\le 2x,\,N(n)\ge bn\right)\\
&\ge\ \Prob_{\pi}\left(\big|\Psi_{1:n}(Z_{0})-\wh{\Psi}_{1:n}(Z_{0})\big|\le 2x,\,|\wh{\Psi}_{1:n}(Z_{0})|\le x,\,N(n)\ge bn\right)\\
&\ge\ \Erw_{\pi}\left(\1_{\{N(n)\ge\lceil bn\rceil\}}\,\Prob\left(|\Psi_{1:n}(Z_{0})|\le x,\,|\wh{\Psi}_{1:n}(Z_{0})|\le x\big|\cG_{n}\right)\right)\\
&=\ \Erw_{\pi}\left(\1_{\{N(n)\ge\lceil bn\rceil\}}\,\Prob\left(|\Psi_{1:n}(Z_{0})|\le x\big|\cG_{n}\right)^{2}\right)\\
&\ge\ \left[\Prob_{\pi}\left(|\Psi_{1:n}(Z_{0})|\le x,\,N(n)\ge\lceil bn\rceil\right)\right]^{2}\\
&\ge\ \left[\Prob_{\pi}\left(|\Psi_{1:n}(Z_{0})|\le x\right)\right]^{2}\,-\,\Prob_{\pi}\left(N(n)<\lceil bn\rceil\right)^{2}
\end{align*}
and then by use of \eqref{eq:divergence of symmetrization} and \eqref{eq:conv N(n)}
$$ \lim_{n\to\infty}\Prob_{\pi}\left(|\Psi_{1:n}(Z_{0})|\le x\right)\ =\ 0 $$
which proves $|\Psi_{1:n}(Z_{0})|\xrightarrow{\Prob_{\pi}}\infty$.\qed
\end{proof}

The proof of Theorem \ref{thm:weak convergence}(c) is now completed by the next lemma.
Given $\limsup_{n\to\infty}|\Pi_{\tau_{n}(i)}|=\infty$ a.s., either $\Pi_{n}\xrightarrow{\Prob_{i}}0$ or $\limsup_{n\to\infty}\Prob_{i}(|\Pi_{n}|>a)>0$ for some $a>0$. Note also that the first alternative is equivalent to $\Pi_{n}\xrightarrow{\Prob_{\pi}}0$.

\begin{Lemma}\label{lem:(B3)}
Suppose that $\limsup_{n\to\infty}|\Pi_{\tau_{n}(i)}|=\infty$ a.s. for some $i\in\cS$ and let $Z_{0}$ be admissible. Then $\Prob_{i}(\Psi_{1:n}(Z_{0})\in\cdot)$ converges weakly to some $Q_{i}$ iff \eqref{pi-degenerate} and one of the following conditions is satisfied:
\begin{description}[(c.2)]\itemsep2pt
\item[(a)] $\Pi_{n}\xrightarrow{\Prob_{i}} 0$. In this case, $Q_{i}=\delta_{c_{i}}$ and $\Psi_{1:n}(Z_{0})\xrightarrow{\Prob_{\pi}}c_{M_{0}}$.
\item[(b)] $\limsup_{n\to\infty}\Prob_{i}(|\Pi_{n}|>a)>0$ for some $a>0$, $c_{j}=c$ for all $j\in\cS$ and $Z_{0}=c$ $\Prob_{i}$-a.s. for some $c\in\R$. In this case, $Q_{i}=\delta_{c}$ and $\Psi_{1:n}(Z_{0})=c$ $\Prob_{i}$-a.s. for all $n\ge 0$.
\end{description}
\end{Lemma}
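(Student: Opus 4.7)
The plan is as follows. For the ``if'' direction, \eqref{pi-degenerate} together with iteration gives the a.s.\ identity $\Psi_{1:n}(Z_0)=c_{M_0}+\Pi_n(Z_0-c_{M_n})$ (cf.~\eqref{eq:general homology}). Under (a), ergodicity of $(M_n)_{n\ge 0}$ makes $(c_{M_n})_{n\ge 0}$ tight, so $\Pi_n\xrightarrow{\Prob_i}0$ forces $\Pi_n(Z_0-c_{M_n})\xrightarrow{\Prob_i}0$ and hence $\Psi_{1:n}(Z_0)\xrightarrow{\Prob_i}c_i$; the analogous computation under $\Prob_\pi$ yields $\Psi_{1:n}(Z_0)\xrightarrow{\Prob_\pi}c_{M_0}$. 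Under (b), $Z_0-c_{M_n}=0$ $\Prob_i$-a.s., so $\Psi_{1:n}(Z_0)=c$ $\Prob_i$-a.s.\ for every $n$.

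For the ``only if'' direction, assume $\Prob_i(\Psi_{1:n}(Z_0)\in\cdot)\Rightarrow Q_i$. Failure of \eqref{pi-degenerate} would, by Lemma~\ref{lem:Psi_1:n(0) to infty in (c)}, force $|\Psi_{1:n}(Z_0)|\xrightarrow{\Prob_\pi}\infty$ and hence $|\Psi_{1:n}(Z_0)|\xrightarrow{\Prob_i}\infty$ (since $\pi_i>0$), contradicting tightness; so \eqref{pi-degenerate} holds and the identity above is at hand. If $\Pi_n\xrightarrow{\Prob_i}0$ we are in (a). Otherwise there exist $a_0,\eta_0>0$ with $\limsup_n\Prob_i(|\Pi_n|>a_0)\ge\eta_0$, and it remains to derive (b), namely $c_j\equiv c$ and $Z_0=c$ $\Prob_i$-a.s.\ for some $c\in\R$.

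The first step is $Z_0=c_i$ $\Prob_i$-a.s. Restricting the identity to the return times yields $\Psi_{1:\tau_n(i)}(Z_0)=c_i+\Pi_{\tau_n(i)}(Z_0-c_i)$, while admissibility makes $Z_0$ $\Prob_i$-independent of the iid multiplicative walk $(\Pi_{\tau_n(i)})_{n\ge 1}$. If $\Prob_i(|Z_0-c_i|>\alpha)\ge\delta>0$ for some $\alpha,\delta>0$, tightness of $(\Pi_{\tau_n(i)}(Z_0-c_i))_{n\ge 1}$ combined with the bound $\Prob_i(|\Pi_{\tau_n(i)}(Z_0-c_i)|>K)\ge\delta\,\Prob_i(|\Pi_{\tau_n(i)}|>K/\alpha)$ (by independence) would force tightness of $(\Pi_{\tau_n(i)})$ under $\Prob_i$. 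However, the proviso $\limsup_n|\Pi_{\tau_n(i)}|=\infty$ $\Prob_i$-a.s.\ of case (c) rules this out: a standard ladder-height argument applied to the iid random walk $\log|\Pi_{\tau_n(i)}|$ (which, by the Chung--Fuchs dichotomy, is either positive divergent or oscillating) shows $\sup_n\Prob_i(|\Pi_{\tau_n(i)}|>K)=1$ for every $K>0$. Hence $Z_0=c_i$ $\Prob_i$-a.s., so $\Psi_{1:\tau_n(i)}(Z_0)=c_i$ for all $n$, which forces $Q_i=\delta_{c_i}$ and $\Pi_n(c_i-c_{M_n})\xrightarrow{\Prob_i}0$.

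The main obstacle is the second step: showing $c_j=c_i$ for all $j\in\cS$. Suppose for contradiction that $c_{j_0}\ne c_i$ for some $j_0\in\cS$. The inclusion $\{M_n=j,|\Pi_n|>K\}\subseteq\{|\Pi_n(c_i-c_{M_n})|>K|c_i-c_j|\}$ gives $\Prob_i(M_n=j,|\Pi_n|>K)\to 0$ for every $K>0$ and every $j$ with $c_j\ne c_i$. Pick a finite $F\subset\cS$ with $i\in F$ and $\pi(F^c)<\eta_0/10$; then $\Prob_i(M_n\in F^c)<\eta_0/5$ for large $n$, so along the subsequence $n_k$ with $\Prob_i(|\Pi_{n_k}|>a_0)\ge\eta_0$ one has $\sum_{j\in F}\Prob_i(M_{n_k}=j,|\Pi_{n_k}|>a_0)\ge 4\eta_0/5$, and the vanishing of the terms with $c_j\ne c_i$ together with pigeonholing produces a state $j^*\in F$ (possibly $j^*=i$) with $c_{j^*}=c_i$ and $\Prob_i(M_{n_k}=j^*,|\Pi_{n_k}|>a_0)\ge\eta_1>0$ along a further subsequence. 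Irreducibility of $(M_n)$ together with $\Prob_\pi(A=0)=0$ furnishes $r_0\in\N$ and $\alpha,\gamma>0$ with $\Prob_{j^*}(M_{r_0}=j_0,|\Pi_{r_0}|>\alpha)\ge\gamma$, whence the strong Markov property at $n_k$ yields $\Prob_i(M_{n_k+r_0}=j_0,|\Pi_{n_k+r_0}|>a_0\alpha)\ge\eta_1\gamma>0$, contradicting $\Prob_i(M_n=j_0,|\Pi_n|>a_0\alpha)\to 0$. Thus $c_j\equiv c_i=:c$ and $Z_0=c$ $\Prob_i$-a.s., which is (b).
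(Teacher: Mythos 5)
The ``if'' direction and the reduction to \eqref{pi-degenerate} via Lemma \ref{lem:Psi_1:n(0) to infty in (c)} are fine and match the paper. The genuine gap is in the first step of your ``only if'' argument for case (b), where you claim that the proviso $\limsup_{n\to\infty}|\Pi_{\tau_{n}(i)}|=\infty$ a.s. rules out tightness of $(\Pi_{\tau_{n}(i)})_{n\ge 1}$ under $\Prob_{i}$, and in particular that $\sup_{n}\Prob_{i}(|\Pi_{\tau_{n}(i)}|>K)=1$ for every $K>0$. Both claims are false: $S_{n}:=\log|\Pi_{\tau_{n}(i)}|$ is an iid random walk, and (i) for a symmetric walk one has $\Prob(S_{n}>x)\to 1/2$, so the supremum is not $1$; (ii) more damagingly, there exist oscillating walks (with $\Erw S_{1}^{+}=\Erw S_{1}^{-}=\infty$, of Kesten--Erickson type) satisfying $\limsup_{n}S_{n}=+\infty$ a.s. and yet $S_{n}\xrightarrow{\Prob}-\infty$, so that $(\Pi_{\tau_{n}(i)})_{n}$ is tight (it even tends to $0$ in probability) while the proviso of the lemma holds. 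In that regime your contradiction evaporates and you cannot conclude $Z_{0}=c_{i}$. A second, related problem is that the tightness of $(\Pi_{\tau_{n}(i)}(Z_{0}-c_{i}))_{n}$ which you feed into this argument is tightness along the \emph{random} times $\tau_{n}(i)$; it does not follow from the assumed weak convergence of $\Psi_{1:m}(Z_{0})$ along deterministic $m$, because the events $\{\tau_{n}(i)=m\}$ and the values $\Psi_{1:m}(Z_{0})$ are dependent.

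The paper avoids both issues. It first identifies $Q_{i}=\delta_{c_{i}}$ by observing that any weak limit must solve the iid SFPE $R\eqdist A_{1}^{i}R+B_{1}^{i}$ with $R$ independent of $(A_{1}^{i},B_{1}^{i})$, and invoking Theorem \ref{thm:fixed points IID}(c); this yields $\Pi_{n}(Z_{0}-c_{M_{n}})\xrightarrow{\Prob_{i}}0$ along deterministic times. It then uses the case-(b) hypothesis $\limsup_{n}\Prob_{i}(|\Pi_{n}|>a)>0$ together with the proviso and the Markov property to upgrade this to $\limsup_{n}\Prob_{i}(|\Pi_{n}|>x,\,M_{n}=j)>0$ for \emph{all} $j\in\cS$ and $x>0$, and concludes $\Prob_{i}(Z_{0}=c_{j})=1$ for every $j$ from the admissibility of $Z_{0}$; this delivers $c_{j}\equiv c$ and $Z_{0}=c$ in one stroke. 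Your second step (the finite-set pigeonhole and transport of mass between states) is essentially a correct variant of that upgrade, but it rests on the unproven first step and so does not repair the argument.
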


\begin{proof}
By the previous result, \eqref{pi-degenerate} is necessary for the distributional convergence of $\Psi_{1:n}(Z_{0})$ under $\Prob_{i}$. Since
$$ \Psi_{1:n}(Z_{0})\ \weakeq\ \1_{\{\tau(i)<n\}}\,\Psi_{1}^{i}\big(\Psi_{\tau(i)+1:n}(Z_{0})\big)\quad\text{under }\Prob_{i} $$
and $\1_{\{\tau(i)=k\}}\,\Psi_{1}^{i}$ and $\Psi_{k+1:n}(Z_{0})$ are independent under $\Prob_{i}$ for all $k,n\in\N$ with $k<n$, it follows easily that a possible limit $Q_{i}$ must solve the SFPE $R\eqdist\Psi_{1}^{i}(R)$, where $R$ and $(A_{1}^{i},B_{1}^{i})$ are independent. Hence $Q_{i}=\delta_{c_{i}}$ by an appeal to Theorem \ref{thm:fixed points IID}(c).

\vspace{.1cm}
(a) If $\Pi_{n}\xrightarrow{\Prob_{\pi}}0$, then $\Psi_{1:n}(Z_{0})=c_{M_{0}}+\Pi_{n}(Z_{0}-c_{M_{n}})$ converges in probability to $c_{M_{0}}$ under $\Prob_{\pi}$ by Slutsky's theorem because $(c_{M_{n}})_{n\ge 0}$ is stationary under $\Prob_{\pi}$.

\vspace{.1cm}
(b) If $\limsup_{n\to\infty}\Prob_{i}(|\Pi_{n}|>a)>0$ for some $a>0$, pick $j\in\cS$ and $\eps\in (0,1)$ such that
$$ \limsup_{n\to\infty}\Prob_{i}(|\Pi_{n}|>\eps,\,M_{n}=j)\,>\,0. $$
By the proviso of this subsection (valid for any $j\in\cS$ by solidarity, see Proposition \ref{prop:trichotomy embedded RW}), for all $x>0$ there exists $m(x)\in\N$ such that
$$ \Prob_{j}(|\Pi_{m(x)}|>x/\eps,\,M_{m(x)}=j)\,>\,0, $$
hence $\limsup_{n\to\infty}\Prob_{i}(|\Pi_{n}|>x,M_{n}=j)>0$ for all $x>0$ which in turn is easily seen to imply the very same for \emph{all} $(j,x)\in\cS\times (0,\infty)$. We infer from $Q_{i}=\delta_{c_{i}}$ that
$$ \Psi_{1:n}(Z_{0})-c_{i}\ =\ \Pi_{n}(Z_{0}-c_{M_{n}})\ \xrightarrow{\Prob_{i}}\ 0 $$
must be satisfied. Now assuming $\Prob_{i}(Z_{0}=c_{j})<1$ for some $j\in\cS$, we arrive at a contradiction via
\begin{align*}
0\ &=\ \lim_{n\to\infty}\Prob_{i}(|\Pi_{n}(Z_{0}-c_{M_{n}})>\eps)\\
&\ge\ \limsup_{n\to\infty}\Prob_{i}(|\Pi_{n}(Z_{0}-c_{j})|>\eps,\,M_{n}=j)\\
&=\ \limsup_{n\to\infty}\Prob_{i}(|\Pi_{n}|>\eps/|Z_{0}-c_{j}|,\,Z_{0}=c_{j},\,M_{n}=j)\\
&=\ \limsup_{n\to\infty}\Prob_{i}(|\Pi_{n}|>\eps/|Z_{0}-c_{j}|,\,M_{n}=j)\,\Prob_{i}(Z_{0}=c_{j})\ >\ 0,
\end{align*}
where the admissibility of $Z_{0}$ has been utilized for the last equality. Consequently, $\Prob_{i}(Z_{0}=c_{j})=1$ for all $j\in\cS$, i.e. $c_{j}\equiv c$ for some $c\in\R$ and $Z_{0}=c$ $\Prob_{i}$-a.s.\qed
\end{proof}

\section{Proof of Theorem \ref{thm:fixed-point property}}\label{sec:proof 3rd thm}

Recall that $\Psi_{1}P(i,\cdot)$ for a kernel $P\in\cP(\cS,\R)$ and $i\in\cS$ is defined as the conditional law of $A_{1}R'+B_{1}$ given $M_{0}=i$, where $R'$ denotes a random variable such that
\begin{equation}\label{eq:specification of R'}
\cL(R'|M_{0},M_{1},A_{1},B_{1})\ =\ \cL(R'|M_{0},M_{1})\ =\ P(M_{1},\cdot)\quad\text{a.s.}
\end{equation}
If $\Psi_{1}P=P$, then $P$ is called a solution to this equation or a fixed point of $\Psi_{1}$. Equivalent to this property is that (see \eqref{SFPE2})
\begin{align*}
R^{i}\ \eqdist\ \Psi_{1}^{i}(R^{i})\ =\ A_{1}^{i}R^{i}+B_{1}^{i}\quad\text{under }\Prob_{i}
\end{align*}
for all $i\in\cS$, where, for each $j\in\cS$ and under each $\Prob_{i}$, $R^{j}$ has distribution $P(j,\cdot)$ and is independent of all other occurring random variables. The solutions to these ordinary SFPE are characterized in Theorem \ref{thm:fixed points IID}, and this result will therefore repeatedly be used herafter.

\subsection{The case (C1)}\label{subsec:(C1)}

Assuming $\Pi_{\tau_{n}(i)}\to 0$ a.s., let $P$ denote a fixed point of $\Psi_{1}$, so that $P(i,\cdot)$ solves \eqref{SFPE2} for any $i\in\cS$. By Theorem \ref{thm:fixed points IID}(a), we infer $\Erw_{i}J_{i}(\log^{+}|B_{1}^{i}|)<\infty$ as well as $P(i,\cdot)=\Prob_{i}(\lim_{n\to\infty}\Psi_{1:n}^{i}(0)\in\cdot)$ which in turn equals the law of $Z_{\infty}$ by Theorem \ref{thm:a.s. convergence} for any $i$.

\vspace{.1cm}
Conversely, if $\Erw_{i}J_{i}(\log^{+}|B_{1}^{i}|)<\infty$, then $P\in\cP(\cS,\R)$, defined by $P(i,\cdot):=\Prob_{i}(Z_{\infty}\in\cdot)$, is a fixed point of $\Psi_{1}$, and it is unique because, by Theorem \ref{thm:weak convergence}(a), $\Psi_{1:n}(Z_{0})\idist Z_{\infty}$ under any $\Prob_{i}$ and for any admissible $Z_{0}$.

\vspace{.2cm}
For the remaining cases where $\Pi_{\tau_{n}(i)}$ does not converge to 0 a.s., Theorem \ref{thm:fixed points IID} implies that the degeneracy condition \eqref{degenerate} and therefore \eqref{pi-degenerate} must be satisfied for the existence of a fixed point $P\in\cP(\cS,\R)$ of $\Psi_{1}$ and thus of \eqref{SFPE2} for \emph{all} $i\in\cS$.

\subsection{The case (C2)}\label{subsec:(C2)}

Assume $\Prob_{i}(A_{1}^{i}=1)<\Prob_{i}(|A_{1}^{i}|=1)=1$ for all $i\in\cS$ and \eqref{pi-degenerate}. Recall that from Subsection \ref{subsec:cond (b)} that $|\Pi_{n}|=a_{M_{0}}/a_{M_{n}}$ a.s. for all $n\ge 0$ and a suitable sequence $(a_{i})_{i\in\cS}$ of positive numbers. Given a fixed point $P$ of $\Psi_{1}$, it follows from Theorem \ref{thm:fixed points IID} that $P(i,\cdot)$ is symmetric about $c_{i}$ for any $i\in\cS$. In other words,
$$ X^{i}\ :=\ R^{i}-c_{i} $$
is a symmetric random variable. By Proposition \ref{prop:degeneracy consequences} (see \eqref{eq:general homology}),
\begin{equation}\label{eq:SFPE X^i}
X^{i}\ =\ \Psi_{1:n}(R^{M_{n}})-c_{i}\ =\ \Psi_{1:n}(R^{M_{n}})-\Psi_{1:n}(c_{M_{n}})\ =\ \Pi_{n}X^{M_{n}}\quad\Prob_{i}\text{-a.s.}
\end{equation}
for all $n\ge 0$ and $i\in\cS$. In particular,
$$ |X^{i}|\ \eqdist\ |\Pi_{n}|\,|X^{M_{n}}|\ =\ a_{i}\,\frac{|X^{M_{n}}|}{a_{M_{n}}}\ =:\ X_{n}'\quad\text{under }\Prob_{i}. $$
The ergodicity of $(M_{n})_{n\ge 0}$ implies the $\Prob_{i}$-distributional convergence of $X_{n}'$ to some $X'$ which does not depend on $i$. As a consequence,
$$ R^{i}\ \eqdist\ a_{i}X\,+\,c_{i}\quad\text{under }\Prob_{i} $$
for any $i\in\cS$, where $\Prob_{i}(X\in\cdot)$ is symmetric and the same for each $i$.

\vspace{.1cm}
Conversely, we must show that $P\in\cP(\cS,\R)$ is a fixed point of $\Psi_{1}$ whenever
$$ P(i,\cdot)\ =\ \Prob(a_{i}X+c_{i}\in\cdot),\quad i\in\cS, $$
for a symmetric random variable $X$, $(c_{i})_{i\in\cS}$ given by \eqref{pi-degenerate} and $(a_{i})_{i\in\cS}$ as before. Let $R'$ denote a random variable satisfying \eqref{eq:specification of R'}. Under $\Prob_{i}$ for any $i\in\cS$, we then obtain by use of \eqref{pi-degenerate}
\begin{align*}
\Psi_{1}(R')\ &=\ A_{1}R'+c_{i}-A_{1}c_{M_{1}}\ =\ c_{i}+A_{1}(R'-c_{M_{1}})\ \eqdist\ c_{i}+|A_{1}|(R'-c_{M_{1}})\\
&=\ c_{i}+\frac{a_{i}}{a_{M_{1}}}(R'-c_{M_{1}})\ \eqdist\ c_{i}+a_{i}X,
\end{align*}
which is the desired result.

\subsection{The case (C3)}\label{subsec:(C3)}

Now assume $\Prob_{i}(A_{1}^{i}=1)=1$ for all $i\in\cS$ and \eqref{pi-degenerate}, thus $\Prob_{i}(B_{1}^{i}=0)=1$ as well. If $P$ denotes a fixed point of $\Psi_{1}$, then Theorem \ref{thm:fixed points IID} asserts that $P(i,\cdot)$ can be an arbitrary distribution for each $i\in\cS$. By Proposition \ref{prop2:degeneracy consequences}(c), there exists an infinite class of sequences $(c_{i})_{i\in\cS}$ for which \eqref{pi-degenerate} holds, thus $\Psi_{1}(c_{M_{1}})=c_{M_{0}})$ a.s. For any such sequence,
$$ X^{i}\ :=\ R^{i}-c_{i}\ \eqdist\ \Pi_{n}X^{M_{n}} $$
remains true under $\Prob_{i}$ (cf. \eqref{eq:SFPE X^i}). By Lemma \ref{lem:sign Pi_n},
$$ X^{i}\ \eqdist\ \frac{a_{i}\sigma_{i}}{a_{M_{n}}\sigma_{M_{n}}}\,X^{M_{n}}\quad\text{under $\Prob_{i}$ for all }n\ge 0. $$
By another appeal to the ergodic theorem, $X^{M_{n}}/(a_{M_{n}}\sigma_{M_{n}})$ converges in distribution under each $\Prob_{i}$ to some random variable $X$ whose law does not depend on $i$. Hence
$$ R^{i}\ \eqdist\ a_{i}\sigma_{i}X\,+\,c_{i} $$
under $\Prob_{i}$ for all $i\in\cS$.

\vspace{.1cm}
Conversely, if $P(i,\cdot)=\Prob_{i}(a_{j}\sigma_{j}X+c_{j}\in\cdot)$ for a random variable $X$ with arbitrary law which does not depend on $i$ and if $R'$ satisfies \eqref{eq:specification of R'}, then $\Psi_{1}P=P$ follows from
$$ \Psi_{1}(R')\ =\ A_{1}(R'-c_{M_{1}})+c_{i}\ =\ \frac{a_{i}\sigma_{i}}{a_{M_{1}}\sigma_{M_{1}}}(R'-c_{M_{1}})+c_{i}\ \eqdist\ a_{i}\sigma_{i}X+c_{i} $$
under $\Prob_{i}$ for all $i\in\cS$.

\subsection{The case (C4)}\label{subsec:(C4)}

If $\limsup_{n\to\infty}|\Pi_{\tau_{n}(i)}|=\infty$ a.s. and \eqref{pi-degenerate} holds, then $(c_{i})_{i\in\cS}$ is uniquely determined by Proposition \ref{prop:degeneracy consequences}. Moreover, if $\Psi_{1}P=P$, then Theorem \ref{thm:fixed points IID} provides us with $P(i,\cdot)=\delta_{c_{i}}$ for all $i\in\cS$. Conversely, any $P\in\cP(\cS,\R)$ having the latter property is also a fixed point of $\Psi_{1}$ by \eqref{eq:general homology} of Proposition \ref{prop:degeneracy consequences}.\qed

\section{Proof of Theorem \ref{thm:convergence forward}}\label{sec:proof thm forward}

In the following, let $i\in\cS$ be fixed and $Z_{0}$ be an admissible variable for $(M_{n},A_{n+1},B_{n+1})_{n\ge 0}$.

\vspace{.2cm}
(a) Suppose that $\Pi_{\tau_{n}(i)}\to 0$ a.s. and $\Erw_{i}J_{i}(\log^{+}|{}^{\#}B_{1}|)<\infty$ and recall that $\Pi_{n}\xrightarrow{\Prob_{\pi}}0$ (see \eqref{eq:Pi_n to 0}) and thereupon
\begin{equation*}
|\Psi_{n:1}(Z_{0})-\Psi_{n:1}(0)|\ =\ |\Pi_{n}Z_{0}|\ \xrightarrow{\Prob_{\pi}}\ 0.
\end{equation*}
Hence, Theorem \ref{thm:a.s. convergence} in combination with \eqref{eq:forward<->backward dual} provides us with
$$ \Psi_{n:1}(Z_{0})\ \weakeq\ \Psi_{n:1}(0)\ \weakeq\ {}^{\#}\Psi_{1:n}(0)\ \to\ {}^{\#}Z_{\infty}\quad\Prob_{\pi}\text{-a.s.}, $$
and so $\Prob_{\pi}(\Psi_{n:1}(Z_{0})\in\cdot)\weakly\Prob_{\pi}({}^{\#}Z_{\infty}\in\cdot)$. In order to see that $\Prob_{i}(\Psi_{n:1}(Z_{0})\in\cdot)$ converges to the same limit, we make use of a coupling argument.

\vspace{.1cm}
Let $(M_{n}',\Psi_{n+1}')_{n\ge 0}$ be independent of $(M_{n},\Psi_{n+1})_{n\ge 0}$ and $Z_{0}$ under $\Prob_{\pi}$ with
$$ \Prob_{\pi}\big((M_{n}',\Psi_{n+1}')_{n\ge 0}\in\cdot\big)\ =\ \Prob_{i}\big((M_{n},\Psi_{n+1})_{n\ge 0}\in\cdot\big). $$
Then the coupling time $T:=\inf\{n:M_{n}=M_{n}'\}$ is a.s. finite and the coupling process
\begin{align*}
\big(\widetilde{M}_{n},\widetilde{\Psi}_{n+1}\big)\ :=\ 
\begin{cases}
(M_{n}',\Psi_{n+1}'),&\text{if }n<T,\\
\hfill (M_{n},\Psi_{n+1}),&\text{if }n\ge T
\end{cases}
\end{align*}
has the same law as $(M_{n}',\Psi_{n+1}')_{n\ge 0}$ while its post-$T$ sequence coincides with $(M_{n},\Psi_{n+1})_{n\ge T}$. This in combination with $\Pi_{n}\xrightarrow{\Prob_{\pi}}0$ (see \eqref{eq:Pi_n to 0}) implies
\begin{align*}
|\Psi_{n:1}(Z_{0})-\widetilde{\Psi}_{n:1}(Z_{0})|\ &=\ |\Psi_{n:T+1}(\Psi_{T:1}(Z_{0}))-\Psi_{n:T+1}(\Psi_{T:1}'(Z_{0}))|\\
&=\ \frac{\Pi_{n}}{\Pi_{T}}|\,\Psi_{T:1}(Z_{0})-\Psi_{T:1}'(Z_{0})|\ \xrightarrow{\Prob_{\pi}}\ 0
\end{align*}
and thus the desired result.

In order to proceed to the remaining cases, we first need the following lemma which forms the counterpart of Lemma \ref{lem:Psi_1:n(0) to infty in (b)}.

\begin{Lemma}\label{lem:Case (b) forward iteration}
Suppose that $\Prob_{i}(|A_{1}^{i}|=1)=1$. Then $|\Psi_{n:1}(Z_{0})|\xrightarrow{\Prob_{\pi}}\infty$ for any admissible $Z_{0}$ if \eqref{dual pi-degenerate} fails to hold.
\end{Lemma}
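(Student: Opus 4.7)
The plan is to reduce this forward statement to its already-established backward analogue, Lemma \ref{lem:Psi_1:n(0) to infty in (b)}, applied to the dual sequence $({}^{\#}M_n,{}^{\#}A_{n+1},{}^{\#}B_{n+1})_{n\ge 0}$. By \eqref{eq:forward<->backward dual}, one has $\Psi_{n:1}(Z_0)\eqdist{}^{\#}\Psi_{1:n}(Z_0)$ under $\Prob_\pi$ whenever $Z_0$ is admissible for the joint system. Since convergence in $\Prob_\pi$-probability to $\infty$ depends only on one-dimensional marginals, I would first enlarge the probability space if necessary to replace $Z_0$ by a copy $\widetilde{Z}_0$ with the same conditional law given $M_0$ and admissible for the joint two-sided extension, thereby reducing the claim to proving $|{}^{\#}\Psi_{1:n}(\widetilde{Z}_0)|\xrightarrow{\Prob_\pi}\infty$.

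Next I would verify the two hypotheses of Lemma \ref{lem:Psi_1:n(0) to infty in (b)} for the dual sequence. The assumption $\Prob_i(|A_1^i|=1)=1$ places the MRW $(M_n,-\log|\Pi_n|)_{n\ge 0}$ in the null-homologous regime (T2) of Proposition \ref{prop:trichotomy MRW}, giving $|\Pi_n|=a_{M_0}/a_{M_n}$ a.s.\ for a suitable positive sequence $(a_j)_{j\in\cS}$, equivalently $|A_k|=a_{M_{k-1}}/a_{M_k}$ a.s.\ as a deterministic function of $(M_{k-1},M_k)$. Since ${}^{\#}A_k=A_{-k+1}$ and ${}^{\#}M_k=M_{-k}$ in the two-sided extension, this null-homology transfers directly to the dual, yielding $|{}^{\#}\Pi_n|=a_{{}^{\#}M_n}/a_{{}^{\#}M_0}$ a.s., so that $\Prob_i(|{}^{\#}A_1^i|=1)=1$ for every $i\in\cS$.

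It remains to match the degeneracy conditions. As already pointed out in the text just before the statement of Theorem \ref{thm:convergence forward}, writing \eqref{pi-degenerate} out for the dual triple and using the stationary identity $({}^{\#}M_0,{}^{\#}M_1,{}^{\#}A_1,{}^{\#}B_1)\eqdist(M_1,M_0,A_1,B_1)$ under $\Prob_\pi$ rewrites it as precisely \eqref{dual pi-degenerate}. Its failure is the standing hypothesis, so Lemma \ref{lem:Psi_1:n(0) to infty in (b)} applies to the dual sequence and produces $|{}^{\#}\Psi_{1:n}(\widetilde{Z}_0)|\xrightarrow{\Prob_\pi}\infty$, which completes the argument. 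The only mildly delicate step is the admissibility bookkeeping around $\widetilde{Z}_0$; the rest is an essentially mechanical transcription of the backward result through duality.
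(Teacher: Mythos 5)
Your argument is correct and is essentially the paper's: both proofs reduce the claim via the duality relation \eqref{eq:forward<->backward dual} to Lemma \ref{lem:Psi_1:n(0) to infty in (b)} applied to the dual sequence, whose hypotheses (null-homology of $|{}^{\#}\Pi_n|$ and failure of the dual degeneracy condition, i.e.\ of \eqref{dual pi-degenerate}) you verify correctly. The only difference is cosmetic: the paper first disposes of a general admissible $Z_0$ by noting $|\Psi_{n:1}(Z_{0})-\Psi_{n:1}(0)|=|\Pi_{n}Z_{0}|$ is tight (since $|\Pi_n|=a_{M_0}/a_{M_n}$), reducing to $Z_0=0$ and thereby avoiding the admissibility bookkeeping you carry out with the copy $\widetilde Z_0$.
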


\begin{proof}
Since $|\Psi_{n:1}(Z_{0})-\Psi_{n:1}(0)|=|\Pi_{n}Z_{0}|$ for all $n\ge 1$ and $(\Pi_{n}Z_{0})_{n\ge 0}$ is tight under the given assumption (see at the beginning of Subsection \ref{subsec:cond (b)}), it suffices to consider the case when $Z_{0}=0$. Now, if \eqref{dual pi-degenerate} fails, then $|\Psi_{n:1}(0)|\eqdist |{}^{\#}\Psi_{1:n}(0)|\xrightarrow{\Prob_{\pi}}\infty$ by Lemma \ref{lem:Psi_1:n(0) to infty in (b)}.\qed
\end{proof}

So we see that, if $\Pi_{\tau_{n}(i)}$ does not converge to 0 a.s., then \eqref{dual pi-degenerate} is necessary for the weak convergence of $\Prob_{i}(\Psi_{n:1}(Z_{0})\in\cdot)$.
Recall that this condition implies \eqref{dual pi-degenerate iterate}, that is 
\begin{align*}
\Psi_{n:1}(Z_{0})\ =\ c_{M_{n}}+\Psi_{n:1}(Z_{0})-\Psi_{n:1}(c_{M_{0}})\ =\ c_{M_{n}}+\Pi_{n}(Z_{0}-c_{M_{0}})\quad\text{a.s.}
\end{align*}
for all $n\ge 1$.

\vspace{.2cm}
(b.1) If $\Prob_{i}(|A_{1}^{i}|=1)=1$, \eqref{dual pi-degenerate} holds and $\wh{\tau}(i)$ is aperiodic, then $\Pi_{n}=\sign(\Pi_{n})\,a_{M_{0}}/a_{M_{n}}$ a.s. and therefore
$$ \Psi_{n:1}(Z_{0})\ =\ \sign(\Pi_{n})\,\frac{a_{i}}{a_{M_{n}}}(Z_{0}-c_{i})+c_{M_{n}}\quad\Prob_{i}\text{-a.s.} $$
Arguing in a similar manner as in the proof of Lemma \ref{lem:final lemma case 2}, $\Prob_{i}(\Psi_{n:1}(Z_{0})\in\cdot)\weakly Q_{i}$ follows with
\begin{align}\label{eq1:def Q_i under (b.1)}
Q_{i}\ =\ \sum_{j\in\cS}\pi_{j}\,\Prob_{i}\left(c_{j}+\frac{\sigma_{i}a_{i}}{\sigma_{j}a_{j}}(Z_{0}-c_{i})\in\cdot\right)
\end{align}
if $\Prob_{i}(A_{1}^{i}=1)=1$, and
\begin{align}\label{eq2:def Q_i under (b.1)}
Q_{i}\ =\ \sum_{j\in\cS}\pi_{j}\,\Prob_{i}\left(c_{j}+\frac{a_{i}}{a_{j}}Y\in\cdot\right)
\end{align}
if $\Prob_{i}(A_{1}^{i}=1)<1$ and $\Prob_{i}(Y\in\cdot)=\frac{1}{2}[\Prob_{i}(Z_{0}-c_{i}\in\cdot)+\Prob_{i}(-(Z_{0}-c_{i})\in\cdot)]$.

\vspace{.1cm}
(b.2) If $\Prob_{i}(|A_{1}^{i}|=1)=1$, \eqref{dual pi-degenerate} holds and $\wh{\tau}(i)$ is 2-periodic, thus $\Prob_{i}(A_{1}^{i}=-1)=1$, the assertion follows again by a straightforward adaptation of the respective arguments in the proof of Lemma \ref{lem:final lemma case 2}. First one shows that $\Prob_{i}(\Psi_{n:1}(Z_{0})\in\cdot)$ converges weakly iff $\Prob_{i}(\Pi_{n}(Z_{0}-c_{i})\in\cdot)$ does so which in turn holds iff the weak limits of $c_{M_{2n}}\pm\Pi_{2n}(Z_{0}-c_{i})$ under $\Prob_{i}$ coincide. Then use
$$ c_{M_{2n}}+\Pi_{2n}(Z_{0}-c_{i})\ =\ c_{M_{2n}}+\frac{\wh\sigma_{(i.1)}\,a_{i}}{\wh\sigma_{(j,\sign(\Pi_{2n}))}\,a_{j}}(Z_{0}-c_{i})\quad\Prob_{i}\text{-a.s.} $$
to finally identify the weak limit of $\Prob_{i}(\Psi_{n:1}(Z_{0})\in\cdot)$ as
\begin{equation}\label{eq:def Q_i under (b.2)}
Q_{i}\ =\ \frac{1}{2}\sum_{j\in\cS_{i}}\pi_{j}\,\Prob_{i}\left(c_{j}+\frac{\wh{\sigma}_{(i,1)}a_{i}}{\wh{\sigma}_{(j,\delta)}a_{j}}(Z_{0}-c_{i})\in\cdot\right).
\end{equation}

(c.1) If $\limsup_{n\to\infty}|\Pi_{\tau_{n}(i)}|=\infty$ a.s. and $\Pi_{n}\xrightarrow{\Prob_{i}}0$, then \eqref{dual pi-degenerate iterate} obviously implies the assertion.

Before proceeding to the remaining cases, note further that $(\tau_{n}(i))_{n\ge 0}\eqdist ({}^{\#}\tau_{n}(i))_{n\ge 0}$ as well as $\wh{\tau}(i)\eqdist{}^{\#}\wh{\tau}(i)$ under $\Prob_{i}$,
where ${}^{\#}\tau_{n}(i)$ and ${}^{\#}\tau(i)$ have the obvious meaning.

\vspace{.1cm}
(c.2) Finally suppose $\limsup_{n\to\infty}|\Pi_{\tau_{n}(i)}|=\infty$ a.s. and $\limsup_{n\to\infty}\Prob_{i}(|\Pi_{n}|>a)>0$ for some $a>0$. Observe that, if $\Prob_{i}(\Psi_{n:1}(Z_{0})\in\cdot)$ converges weakly to some $Q_{i}$, then \eqref{dual pi-degenerate iterate} in combination with the tightness of $(c_{M_{n}})_{n\ge 0}$ entails the tightness of $(\Pi_{n}(Z_{0}-c_{0}))_{n\ge 0}$ under $\Prob_{i}$. But the latter is only possible if either $\Prob_{i}(Z_{0}=c_{i})=1$, in which case $Q_{i}=\Prob_{\pi}(c_{M_{0}}\in\cdot)$ (the ergodic limit of $c_{M_{n}}$) as claimed by the theorem, or $(|\Pi_{n}|)_{n\ge 0}$ is tight under $\Prob_{i}$. The subsequent argument will rule out the last alternative. 

\vspace{.1cm}
Recall that $S_{n}=-\log|\Pi_{n}|$, $n\ge 0$, is not null-homologous because $\limsup_{n\to\infty}|\Pi_{\tau_{n}(i)}|=\infty$ a.s. Conditioned upon the driving chain $(M_{n})_{n\ge 0}$, they form a partial sum sequence of independent random variables under $\Prob_{i}$. Since $|\Pi_{n}|=e^{-S_{n}}$, the tightness of $(|\Pi_{n}|)_{n\ge 0}$ is equivalent to the tightness of $(S_{n}^{-})_{n\ge 0}$ which in turn ensures that, for any $\eps>0$, there exists $x>\log a$ such that
$$ \sup_{n\ge 0}\,\Prob_{i}(S_{n}\le -x)\ <\ \eps. $$
Lemma \ref{lem:concentration MRW} in the Appendix further provides us with $|S_{n}|\xrightarrow{\Prob_{\pi}}\infty$, in particular
$$ \lim_{n\to\infty}\Prob_{i}(|S_{n}|\le x)\ =\ 0, $$
whence, for suitable $m=m(\eps,x)\in\N$,
$$ \sup_{n\ge m}\,\Prob_{i}(|\Pi_{n}|>a)\ \le\ \sup_{n\ge m}\,\Prob_{i}(|\Pi_{n}|\ge e^{-x})\ =\ \sup_{n\ge m}\,\Prob_{i}(S_{n}\le x)\ \le\ \eps. $$
As $\eps>0$ was arbitrarily chosen, we arrive at $\limsup_{n\to\infty}\Prob_{i}(|\Pi_{n}|>a)=0$ and thus a contradiction.\qed

\section{Appendix}\label{sec:appendix}

\begin{Lemma}\label{lem:regenerative process}
Let $(M_{n},X_{n})_{n\ge 0}$ be a Markov-modulated sequence with ergodic driving chain $(M_{n})_{n\ge 0}$ on a countable state space $\cS$ and $X_{0},X_{1},\ldots$ taking values in $\R^{d}$ for some $d\ge 1$. For an arbitrary $i\in\cS$ and a measurable, real-valued function $f$, define the sequence
$$ T_{n}\ :=\ f(X_{\wh{\tau}_{n-1}(i)+1},\ldots,X_{\wh{\tau}_{n}(i)}), $$
where $(\wh{\tau}_{n}(i))_{n\ge 0}$ is a subsequence of $(\tau_{n}(i))_{n\ge 0}$ having iid integrable increments under $\Prob_{i}$. Then $T_{\wh{N}(n)+1}$ converges in distribution (under any $\Prob_{j}$) to the size-biased distribution
$$ \frac{1}{\Erw_{i}\wh{\tau}(i)}\Erw_{i}\wh{\tau}(i)\1_{\{T_{1}\in\cdot\}}, $$
where $\wh{\tau}(i):=\wh{\tau}_{1}(i)$ and $\wh{N}(n):=\sup\{k\ge 1:\wh{\tau}_{k}(i)\le n\}$.
\end{Lemma}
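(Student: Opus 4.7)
The plan is to recognize $(T_{\wh N(n)+1})_{n\ge 0}$ as a real-valued regenerative process under $\Prob_i$ which is \emph{constant on each cycle}, and then invoke the classical regenerative limit theorem in the form already cited in Section 3.4 (Asmussen \cite[Thm.~VI.1.2]{Asmussen:03}). First I would remove the dependence on the initial state: for $j\ne i$, positive recurrence gives $\tau(i)<\infty$ $\Prob_j$-a.s., and the strong Markov property at $\tau(i)$ shows that the post-$\tau(i)$ Markov-modulated sequence has the same joint law as $(M_n,X_n)_{n\ge 1}$ under $\Prob_i$. Since $T_{\wh N(n)+1}$ depends only on the excursion covering time $n+1$, which for large $n$ lies entirely to the right of $\tau(i)$, the distributional limit under $\Prob_j$ coincides with that under $\Prob_i$, and it suffices to work under $\Prob_i$.

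Under $\Prob_i$, the strong Markov property at each $\wh\tau_m(i)$ makes the pairs $(\xi_m,T_m)_{m\ge 1}$, with $\xi_m:=\wh\tau_m(i)-\wh\tau_{m-1}(i)$, iid and integrable. Conditioning on the value $\wh\tau_{\wh N(n)}(i)=k$ and using the independence of the first $m$ cycles from $(\xi_{m+1},T_{m+1})$ yields the renewal equation
\[
\Prob_i\bigl(T_{\wh N(n)+1}\in A\bigr)\;=\;\sum_{k=0}^{n}U(\{k\})\,\Prob_i\bigl(T_1\in A,\,\xi_1>n-k\bigr),
\]
where $U(\{k\}):=\sum_{m\ge 0}\Prob_i(\wh\tau_m(i)=k)$ is the renewal mass. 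The discrete key renewal theorem, or equivalently Asmussen's theorem applied to the bounded cycle-constant functional $Z_k:=\1_{\{T_{\wh N(k)+1}\in A\}}$ (which equals $\1_{\{T_1\in A\}}$ throughout the first cycle), sends the right-hand side to
\[
\frac{1}{\Erw_i\wh\tau(i)}\sum_{k\ge 0}\Prob_i\bigl(T_1\in A,\,\xi_1>k\bigr)\;=\;\frac{\Erw_i\bigl[\wh\tau(i)\,\1_{\{T_1\in A\}}\bigr]}{\Erw_i\wh\tau(i)},
\]
using the identity $\sum_{k\ge 0}\1_{\{\xi_1>k\}}=\xi_1$. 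This is precisely the asserted size-biased law.

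The only step requiring a short additional argument, and the main obstacle, is potential lattice periodicity of $\wh\tau(i)$, which indeed occurs in the application inside Lemma \ref{lem:Psi_1:n(0) to infty in (b)} where $\wh\tau(i)$ may be $2$-periodic. When $\xi_1$ is supported on $d\Z$ for some $d\ge 2$, the renewal measure $U$ charges only $d\Z$, so the renewal equation a priori only furnishes convergence along the single residue class $n\in d\Z$. However, since $\xi_1\in d\Z$, the function $n\mapsto\Prob_i(\xi_1>n,\,T_1\in A)$ is constant on each block $\{jd,jd+1,\dots,(j+1)d-1\}$, so the limiting expression above is independent of the residue $n\bmod d$, and full convergence of $\Prob_i(T_{\wh N(n)+1}\in A)$ as $n\to\infty$ follows by comparing each residue class separately. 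All remaining manipulations are standard.
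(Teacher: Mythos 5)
Your argument is correct and is precisely the ``standard renewal argument'' that the paper invokes without giving any details (its proof of this lemma is a one-line remark). The renewal equation, the key renewal theorem identification of the size-biased limit via $\sum_{k\ge 0}\Prob_i(T_1\in A,\,\xi_1>k)=\Erw_i[\wh\tau(i)\1_{\{T_1\in A\}}]$, the reduction to $\Prob_i$ by the strong Markov property, and in particular your treatment of the $d$-periodic case (where the cycle-constancy of $k\mapsto\Prob_i(T_1\in A,\xi_1>k)$ makes the limit independent of the residue class) are all sound and correctly fill the gap left by the paper.
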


\begin{proof}
This can be proved by standard renewal arguments and we therefore refrain from giving further details.\qed
\end{proof}

Our last lemma has been crucial for the proof of Theorem \ref{thm:convergence forward} in the case (c.2) but is of interest also in its own right.

\begin{Lemma}\label{lem:concentration MRW}
Let $(M_{n},S_{n})_{n\ge 0}$ be a MRW which is not null-homologous and has positive recurrent  driving chain $M=(M_{n})_{n\ge 0}$ with stationary distribution $\pi$. Then $|S_{n}|\xrightarrow{\Prob_{\pi}}\infty$, i.e.
\begin{equation}\label{eq:concentration MRW}
\lim_{n\to\infty}\Prob_{\pi}(|S_{n}|\le x)\ =\ 0
\end{equation}
for all $x>0$.
\end{Lemma}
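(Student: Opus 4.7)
My plan is to reduce to the classical iid setting by sampling at return times of the driving chain to an arbitrary fixed state, and then control the residual contribution from the current (incomplete) cycle via Lemma \ref{lem:regenerative process}.

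Fix $i \in \cS$. Since $(M_n,S_n)_{n\ge 0}$ is not null-homologous, Proposition \ref{prop:trichotomy MRW} rules out alternative (T2), so $\Prob_{i}(S_{\tau(i)}=0)<1$. Under $\Prob_{i}$, the embedded sequence $(S_{\tau_n(i)})_{n\ge 0}$ is an ordinary RW with iid nontrivial increments, for which the classical concentration result $|S_n^*|\xrightarrow{\Prob_i}\infty$ holds. The quickest route is a characteristic-function argument: any subsequential weak limit $Y$ of $(S_n^*)_{n\ge 0}$ must satisfy $\phi_Y(t)=\phi(t)^m\,\phi_Y(t)$ for every $m\ge 1$ (write $S_n^*=S_m^*+(S_n^*-S_m^*)$ with independent summands and pass to the limit), so $\phi_Y(t)=0$ off the at-most-countable set $\{t:\phi(t)\text{ is a root of unity}\}$, which contradicts $\phi_Y(0)=1$ and continuity of $\phi_Y$. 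Hence $|S_{\tau_n(i)}|\xrightarrow{\Prob_i}\infty$.

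Next I would set $N_i(n):=\sup\{k\ge 0:\tau_k(i)\le n\}$ and decompose $S_n=S_{\tau_{N_i(n)}(i)}+R_n$. The residual is dominated in absolute value by the current cycle's maximum $W_{N_i(n)+1}$, where $W_k:=\max_{\tau_{k-1}(i)<j\le\tau_k(i)}|S_j-S_{\tau_{k-1}(i)}|$, and Lemma \ref{lem:regenerative process} (applied with $\hat\tau_n=\tau_n(i)$ and $f$ the maximum-of-absolute-partial-sums functional over one cycle) yields distributional convergence, and hence tightness, of $W_{N_i(n)+1}$ under $\Prob_{i}$. Combining this tightness with $N_i(n)\to\infty$ $\Prob_i$-a.s. and the previous step forces $|S_n|\xrightarrow{\Prob_i}\infty$ for every $i\in\cS$. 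The passage from $\Prob_i$ to $\Prob_\pi$ is then immediate from the representation $\Prob_\pi(|S_n|\le x)=\sum_{i\in\cS}\pi_i\,\Prob_i(|S_n|\le x)$ via dominated convergence with the summable bound $\pi_i$. The only delicate step is the iid concentration estimate itself; the rest is routine regenerative bookkeeping.
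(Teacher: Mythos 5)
The reduction to the embedded walk contains a genuine gap at the step ``combining this tightness with $N_i(n)\to\infty$ $\Prob_i$-a.s.\ and the previous step forces $|S_n|\xrightarrow{\Prob_i}\infty$''. What you have established for the embedded walk is only divergence \emph{in probability}: $(S_{\tau_n(i)})_{n\ge0}$ may well be an oscillating random walk, so $|S_{\tau_k(i)}|\to\infty$ in $\Prob_i$-probability but not a.s. Convergence in probability does not pass to random subsequences unless the random index is independent of the sequence, and here $N_i(n)$ is \emph{not} independent of $(S_{\tau_k(i)})_{k\ge0}$: the pair (excursion length, excursion increment) $\bigl(\tau_k(i)-\tau_{k-1}(i),\,S_{\tau_k(i)}-S_{\tau_{k-1}(i)}\bigr)$ is iid but its two components are in general dependent, and conditioning on $\{N_i(n)=k\}=\{\tau_k(i)\le n<\tau_{k+1}(i)\}$ biases the joint law of the first $k$ increments. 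Concretely, writing $\Prob_i\bigl(|S_{\tau_{N_i(n)}(i)}|\le x\bigr)=\sum_{k}\Prob_i\bigl(N_i(n)=k\bigr)\,\Prob_i\bigl(|S_{\tau_k(i)}|\le x\,\big|\,N_i(n)=k\bigr)$, the conditional probabilities need not tend to $0$, and one cannot instead bound the summands by the unconditional $\Prob_i(|S_{\tau_k(i)}|\le x)$ because these are not summable over $k$. In examples where $Y_k:=S_{\tau_k(i)}-S_{\tau_{k-1}(i)}$ is a deterministic function of the excursion length, $S_{\tau_k(i)}$ is essentially determined by $(k,\tau_k(i))$, and on $\{N_i(n)=k\}$ it concentrates near an affine function of $k$ that vanishes at the typical value $k\approx n\pi_i$; the conclusion then survives only through a local-CLT type spreading of $N_i(n)$, which is precisely the quantitative input your argument does not supply.

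This is exactly the difficulty the paper's proof is built around, and it is why that proof splits into two cases rather than running your single reduction. In Case 1 ($S_n=\theta_n(M_0,M_n)$ a.s.), the paper shows that the embedded increments have a.s.\ constant sign, so that $S_{\tau_n(i)}\to\infty$ (or $-\infty$) \emph{almost surely}; a.s.\ divergence does transfer to the random index $N_i(n)\to\infty$, and only then is the residual controlled by Lemma \ref{lem:regenerative process} as you propose. In Case 2 (some $S_m$ is nondegenerate given $M_0,M_m$), the paper abandons the embedded walk altogether and applies the Kolmogorov--Rogozin concentration inequality to $S_n$ \emph{conditionally on the chain}, which handles the dependence between the increments and the chain directly. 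Your treatment of the residual term via Lemma \ref{lem:regenerative process}, and the final passage from $\Prob_i$ to $\Prob_\pi$ by dominated convergence, are fine; the missing piece is a correct argument for the main term $S_{\tau_{N_i(n)}(i)}$, which requires either upgrading to a.s.\ divergence or a concentration-function estimate.
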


\begin{proof}
Let $X_{1},X_{2},\ldots$ denote the increments of $(S_{n})_{n\ge 0}$. We consider two cases:

\vspace{.1cm}
\textsc{Case 1}. $S_{n}=\theta_{n}(M_{0},M_{n})$ a.s. for all $n\in\N$ and suitable $\theta_{n}:\cS^{2}\to\R$. Then
$$ \theta_{n}(M_{0},M_{n})\ =\ \sum_{k=1}^{n}\theta_{1}(M_{k-1},M_{k})\quad\text{a.s.} $$
for all $n\in\N$, and this further implies $S_{\tau(i)}=\theta_{\tau(i)}(i,i)$ $\Prob_{i}$-a.s. as well as
\begin{equation}\label{eq:theta_m(i,i)}
\theta_{k}(i,i)+\theta_{l}(i,i)\ =\ \theta_{k+l}(i,i)
\end{equation}
for all $i\in\cS$ and $k,l\in\cC_{i}:=\{m\in\N:\Prob_{i}(M_{m}=i)>0\}$. Fix any $i\in\cS$. Since $(M_{n},S_{n})_{n\ge 0}$ is not null-homologous, we have $\Prob_{i}(S_{\tau(i)}\ne 0)>0$, w.l.o.g. $\Prob_{i}(S_{\tau(i)}>0)>0$, which in turn entails $\theta_{m}(i,i)>0$ for some $m\in\cC_{i}$. Now use \eqref{eq:theta_m(i,i)} and $m\cC_{i}\subset\cC_{i}$ to infer
$$ \theta_{km}(i,i)\ =\ m\,\theta_{k}(i,i)\ =\ k\,\theta_{m}(i,i)\ >\ 0 $$
and thereby $\theta_{k}(i,i)>0$ for all $k\in\N$. As a consequence, $S_{\tau(i)}>0$ $\Prob_{i}$-a.s. and so $S_{\tau_{n}(i)}\to\infty$ a.s. Defining $D_{n}^{i}:=\max_{\tau_{n-1}(i)\le k\le\tau_{n}(i)}|S_{k}|$ for $n\ge 1$, clearly iid under $\Prob_{i}$, Lemma \ref{lem:regenerative process} ensures that $D_{N(n)}^{i}$ converges in distribution under any $\Prob_{j}$, where as earlier $N(n)=\sup\{k:\tau_{n}(i)\le n\}$. By combining this with $|S_{n}|\le S_{\tau_{N(n)}(i)}+D_{N(n)}^{i}$ a.s. and $S_{\tau_{N(n)}(i)}\to\infty$ a.s., we finally arrive at the conclusion that $S_{n}\xrightarrow{\Prob_{\pi}}\infty$. 

\vspace{.2cm}
\textsc{Case 2}. For some $m\in\N$, $S_{m}$ given $M_{0},M_{m}$ is not a.s. constant. Then put $X_{n}^{(m)}:=S_{nm}-S_{(n-1)m}$ for $n\in\N$ and $M^{(m)}:=(M_{nm})_{n\ge 0}$. Note that, if
$$ H(X_{n}^{(m)},\lambda|M^{(m)})\ :=\ \sup_{x\in\R}\Prob_{\pi}(x\le X_{n}^{(m)}\le x+\lambda|M^{(m)}),\quad\lambda>0, $$
denotes the conditional concentration function of $X_{n}^{(m)}$ given $M^{(m)}$ under $\Prob_{\pi}$, then
$$ H(X_{n}^{(m)},\lambda|M^{(m)})\ =\ H(X_{n}^{(m)},\lambda|M_{(n-1)m},M_{nm})\quad\text{a.s.} $$
Since the $X_{n}^{(m)}$ are conditionally independent given $M^{(m)}$, the Kolmogorov-Rogozin inequality (see e.g. Ess\'een \cite{Esseen:68}) provides us with
\begin{align}\label{eq:K-R-inequality}
H(S_{n},\lambda|M^{(m)})\ \le\ \frac{C}{l^{1/2}}\left(\frac{1}{l}\sum_{k=1}^{l}H(X_{k}^{(m)},\lambda|M_{(k-1)m},M_{km})\right)^{-1/2}\quad\text{a.s.}
\end{align}
if $n=lm+r$ for $l\in\N$ and $r\in\{0,\ldots,m-1\}$. By the ergodic theorem,
\begin{align*}
\lim_{l\to\infty}\,\frac{1}{l}\sum_{k=1}^{l}H(X_{k}^{(m)},\lambda|M_{(k-1)m},M_{km})\ =\ \Erw_{\pi}H(S_{m},\lambda|M_{0},M_{m})\quad\text{a.s.}
\end{align*}
for all $\lambda>0$. Consequently, $\Prob_{\pi}(|S_{n}|\le x|M^{(m)})\to 0$ a.s. and particularly \eqref{eq:concentration MRW} for all $x>0$ follows from \eqref{eq:K-R-inequality} because $\Erw_{\pi}H(S_{m},\lambda|M_{0},M_{m})>0$ for some $\lambda>0$ under the given assumption that $S_{m}$ given $M_{0},M_{m}$ is not a.s. constant.\qed
\end{proof}

\bibliographystyle{abbrv}
\bibliography{StoPro}

\def\cprime{$'$}
\begin{thebibliography}{10}

\bibitem{AlsBuck:16}
G.~Alsmeyer and F.~Buckmann.
\newblock Fluctuation theory for {M}arkov random walks, 2016.
\newblock Preprint available at {\tt http://arxiv.org/abs/1608.08377}.

\bibitem{AlsIksRoe:09}
G.~Alsmeyer, A.~Iksanov, and U.~R{\"o}sler.
\newblock On distributional properties of perpetuities.
\newblock {\em J. Theoret. Probab.}, 22(3):666--682, 2009.

\bibitem{Asmussen:03}
S.~Asmussen.
\newblock {\em Applied probability and queues}.
\newblock Springer-Verlag, New York, $2^{\it nd}$ edition, 2003.

\bibitem{BasuRoi:13}
R.~Basu and A.~Roitershtein.
\newblock Divergent perpetuities modulated by regime switches.
\newblock {\em Stoch. Models}, 29(2):129--148, 2013.

\bibitem{Benhabib+al:11}
J.~Benhabib, A.~Bisin, and S.~Zhu.
\newblock The distribution of wealth and fiscal policy in economies with
  finitely lived agents.
\newblock {\em Econometrica}, 79(1):123--157, 2011.

\bibitem{BenhabibDave:13}
J.~Benhabib and C.~Dave.
\newblock Learning, large deviations and rare events.
\newblock {\em Review of Economic Dynamics}, 17(3):367 -- 382, 2014.

\bibitem{Brandt:86}
A.~Brandt.
\newblock The stochastic equation {$Y_{n+1}=A_nY_n+B_n$} with stationary
  coefficients.
\newblock {\em Adv. in Appl. Probab.}, 18(1):211--220, 1986.

\bibitem{BurDamMik:16}
D.~Buraczewski, E.~Damek, and T.~Mikosch.
\newblock {\em Stochastic models with power-law tails. The equation $X=AX+B$.}
\newblock Springer Series in Operations Research and Financial Engineering.
  Springer International Publishing, Switzerland, 2016.

\bibitem{BurLet:12}
D.~Buraczewski and M.~Letachowicz.
\newblock Limit theorems for stochastic recursions with {M}arkov dependent
  coefficients.
\newblock {\em Colloq. Math.}, 128(2):263--276, 2012.

\bibitem{Chow+Teicher:97}
Y.~S. Chow and H.~Teicher.
\newblock {\em Probability theory. Independence, interchangeability,
  martingales}.
\newblock Springer Texts in Statistics. Springer-Verlag, New York, $3^{\it rd}$
  edition, 1997.

\bibitem{Collamore:09}
J.~F. Collamore.
\newblock Random recurrence equations and ruin in a {M}arkov-dependent
  stochastic economic environment.
\newblock {\em Ann. Appl. Probab.}, 19(4):1404--1458, 2009.

\bibitem{Saporta:05b}
B.~de~Saporta.
\newblock Tail of the stationary solution of the stochastic equation
  {$Y_{n+1}=a_nY_n+b_n$} with {M}arkovian coefficients.
\newblock {\em Stochastic Process. Appl.}, 115(12):1954--1978, 2005.

\bibitem{Elton:90}
J.~H. Elton.
\newblock A multiplicative ergodic theorem for {L}ipschitz maps.
\newblock {\em Stochastic Process. Appl.}, 34(1):39--47, 1990.

\bibitem{Erickson:73}
K.~B. Erickson.
\newblock The strong law of large numbers when the mean is undefined.
\newblock {\em Trans. Amer. Math. Soc.}, 185:371--381 (1974), 1973.

\bibitem{Esseen:68}
C.~G. Esseen.
\newblock On the concentration function of a sum of independent random
  variables.
\newblock {\em Z. Wahrscheinlichkeitstheorie und Verw. Gebiete}, 9:290--308,
  1968.

\bibitem{Ghosh+al:10}
A.~P. Ghosh, D.~Hay, V.~Hirpara, R.~Rastegar, A.~Roitershtein, A.~Schulteis,
  and J.~Suh.
\newblock Random linear recursions with dependent coefficients.
\newblock {\em Statist. Probab. Lett.}, 80(21-22):1597--1605, 2010.

\bibitem{Goldie:91}
C.~M. Goldie.
\newblock Implicit renewal theory and tails of solutions of random equations.
\newblock {\em Ann. Appl. Probab.}, 1(1):126--166, 1991.

\bibitem{GolGru:96}
C.~M. Goldie and R.~Gr{\"u}bel.
\newblock Perpetuities with thin tails.
\newblock {\em Adv. Appl. Probab.}, 28(2):463--480, 1996.

\bibitem{GolMal:00}
C.~M. Goldie and R.~A. Maller.
\newblock Stability of perpetuities.
\newblock {\em Ann. Probab.}, 28(3):1195--1218, 2000.

\bibitem{Grincev:81}
A.~Grincevi\v{c}ius.
\newblock {Products of random affine transformations.}
\newblock {\em Lith. Math. J.}, 20:279--282, 1981.

\bibitem{Grincev:82}
A.~{Grincevi\v{c}ius}.
\newblock {A random difference equation.}
\newblock {\em {Lith. Math. J.}}, 21:302--306, 1982.

\bibitem{Hamilton:89}
J.~D. Hamilton.
\newblock A new approach to the economic analysis of nonstationary time series
  and the business cycle.
\newblock {\em Econometrica}, 57(2):357--384, 1989.

\bibitem{HayRasRoi:11}
D.~Hay, R.~Rastegar, and A.~Roitershtein.
\newblock Multivariate linear recursions with {M}arkov-dependent coefficients.
\newblock {\em J. Multivariate Anal.}, 102(3):521--527, 2011.

\bibitem{Kesten:70}
H.~Kesten.
\newblock The limit points of a normalized random walk.
\newblock {\em Ann. Math. Statist.}, 41:1173--1205, 1970.

\bibitem{Kesten:73}
H.~Kesten.
\newblock Random difference equations and renewal theory for products of random
  matrices.
\newblock {\em Acta Math.}, 131:207--248, 1973.

\bibitem{Lalley:86}
S.~P. Lalley.
\newblock Renewal theorem for a class of stationary sequences.
\newblock {\em Probab. Theory Relat. Fields}, 72(2):195--213, 1986.

\bibitem{Roitershtein:07}
A.~Roitershtein.
\newblock One-dimensional linear recursions with {M}arkov-dependent
  coefficients.
\newblock {\em Ann. Appl. Probab.}, 17(2):572--608, 2007.

\bibitem{Vervaat:79}
W.~Vervaat.
\newblock On a stochastic difference equation and a representation of
  nonnegative infinitely divisible random variables.
\newblock {\em Adv. Appl. Probab.}, 11(4):750--783, 1979.

\end{thebibliography}

\end{document}